\newcommand{\Proj}{{\rm Proj}}
\newcommand{\Spec}{{\rm Spec}}
\newcommand{\nc}{\newcommand}
\nc{\bla}{\phantom{bbbbb}}
\newcommand{\Hom}{ \,{\rm Hom} \,}
\newcommand{\Sym}{ \,{\rm Sym} \,}
\newcommand{\beq}{\begin{equation}}
\newcommand{\eeq}{\end{equation}}
\newcommand{\barr}{\begin{array}}
\newcommand{\earr}{\end{array}}
\newcommand{\beqar}{\begin{eqnarray}}
\newcommand{\eeqar}{\end{eqnarray}}
\newtheorem{theorem}{Theorem}[section]
\newtheorem{corollary}[theorem]{Corollary}
\newtheorem{lemma}[theorem]{Lemma}
\newtheorem{prop}[theorem]{Proposition}
\newtheorem{definition}[theorem]{Definition}
\newtheorem{remark}[theorem]{Remark}
\newtheorem{conjecture}[theorem]{Conjecture}
\newtheorem{exit}[theorem]{Example}
\newenvironment{rem}{\begin{remark}\rm}{\end{remark}}
\newenvironment{defn}{\begin{definition}\rm}{\end{definition}}
\newcommand{\GG}{{\mathbb G }}
\newcommand{\CC}{{\mathbb C }}
\nc{\FF}{ {\mathbb F} }
\nc{\HH}{ {\mathbb H} }
\newcommand{\ZZ}{{\mathbb Z }}
\newcommand{\PP}{ {\mathbb P } }
\newcommand{\QQ}{{\mathbb Q }}
\newcommand{\UU}{{\mathbb U }}
\newcommand{\cale}{\mathcal{E}}
\newcommand{\calo}{\mathcal{O}}
\newcommand{\cals}{\mathcal{S}}
\nc{\conv}{{\rm Conv}}
\nc{\umax}{{U_{\max}}}
\newcommand{\syms}{\mathrm{sym}}
\newcommand{\symdot}{\mathrm{Sym}^{\le k}\CC^n}
\newcommand{\comb}{\mathrm{perm}}
\newcommand{\symk}{\Sym^{\le k}\CC^n}
\newcommand{\symkp}{\syms^{\le k}p}
\newcommand{\symkk}{\Sym^{\le k}\CC^k}
\newcommand{\wsymk}{\wedge^k(\Sym^{\le k}\CC^k)}
\newcommand{\wsymn}{\wedge^n(\Sym^{\le k}\CC^n)}
\newcommand{\grass}{\mathrm{Grass}}
\newcommand{\bg}{\mathbf{\gamma}}
\newcommand{\bX}{\overline{X}}
\newcommand{\be}{\mathbf{e}}
\newcommand{\bz}{\mathbf{z}}
\newcommand{\bi}{\mathbf{i}}
\newcommand{\bu}{\mathbf{u}}
\newcommand{\bl}{\mathbf{\lambda}}
\newcommand{\zdis}{\mathbf{z}}
\newcommand{\SL}{\mathrm{SL}}
\newcommand{\GL}{\mathrm{GL}}
\newcommand{\ba}{\mathbf{\alpha}}
\newcommand{\bs}{\mathbf{s}}
\newcommand{\jetk}[2]{J_{k}({#1},{#2})}
\newcommand{\jetko}[2]{J_{k}^o({#1},{#2})}
\newcommand{\jetreg}[2]{J_{k}^{\mathrm{reg}}({#1},{#2})}
\newcommand{\jetnondeg}[2]{J_{k}^{\mathrm{nondeg}}({#1},{#2})}
\newcommand{\flk}{\mathrm{Flag}_{k}(\CC^n)}
\newcommand{\flag}{\mathrm{Flag}}
\newcommand{\Fl}{\mathcal{F}}
\newcommand{\imm}{\mathrm{im}}
\newcommand{\lieu}{{\mathfrak u}}
\newcommand{\sym}{\mathrm{Sym}}
\newcommand{\symdott}{\mathrm{Sym}^{\le k}\CC^n}
\nc{\lieq}{{\mathfrak q}}
\nc{\liez}{{\mathfrak z}}
\nc{\lieqs}{{\lieq}^*}
\nc{\lieg}{{\mathfrak g}}
\nc{\liegs}{{\lieg}^*}
\nc{\liep}{{\mathfrak p}}
\nc{\lieps}{{\liep}^*}
\def\a{\alpha}
\def\b{\beta}
\def\g{\gamma}
\def\d{\delta}
\def\e{\varepsilon}
\def\vare{\varepsilon}
\def\t{\theta}
\def\l{\lambda}
\def\n{\nu}
\def\s{\sigma}
\def\vp{\varphi}
\def\U{\Upsilon}
\title{A geometric construction for invariant jet differentials}
\author{Gergely Berczi and Frances Kirwan\\Mathematical Institute, Oxford OX1 3BJ, UK}
\thanks{This work was supported by the Engineering and Physical Sciences 
Research Council [grant numbers   GR/T016170/1,EP/G000174/1].}
\begin{document}

\maketitle

\section{Introduction}

The action of the reparametrization group $\GG_k$, consisting of $k$-jets of germs of biholomorphisms of $(\CC,0)$, on the bundle $J_k = J_kT^*X$ of $k$-jets at 0 of germs of holomorphic curves $f:\CC \to X$ in a complex manifold $X$ has been a focus of investigation since the work of Demailly \cite{dem} which built on that of Green and Griffiths \cite{gg}. Here $\GG_k$ is a non-reductive complex algebraic group which is the semi-direct product $\GG_k = \UU_k \rtimes \CC^*$ of its unipotent radical $\UU_k$ with $\CC^*$; it has the form 
$$
\GG_k \cong \left\{ 
\left(\begin{array}{ccccc}
\a_1 & \a_2 & \a_3 & \cdots  & \a_k \\
0        & \a_1^2 &  \cdots &  &  \\
0        & 0       & \a_1^3  & \cdots &  \\
\cdot    & \cdot   & \cdot    & \cdot &  \cdot \\
0 & 0 & 0 & \cdots  & \a_1^k 
\end{array} \right) : \a_1 \in \CC^*, \a_2,\ldots,\a_k \in \CC \right\}
$$
where the entries above the leading diagonal are polynomials in $\a_1, \ldots, \a_k$, and $\UU_k$ is the subgroup consisting of matrices of this form with $\a_1=1$. The bundle of Demailly-Semple jet differentials of order $k$ over $X$ has
fibre at $x \in X$ given by the algebra
$\calo((J_{k})_x)^{\UU_k} $  
of  $\UU_k$-invariant polynomial functions on the fibre $(J_{k})_x = (J_kT^*X)_x$
of $J_kT^*X$. 
More generally following \cite{PR} we can replace $\CC$ with $\CC^p$ for $p\geq 1$ and consider the bundle $J_{k,p}T^*X$ of $k$-jets at 0 of holomorphic maps $f:\CC^p \to X$ and the reparametrization group $\GG_{k,p}$ consisting of $k$-jets of germs of biholomorphisms of $(\CC^p,0)$; then $\GG_{k,p}$ is the semi-direct product of its unipotent radical $\UU_{k,p}$ and the complex reductive group $\GL(p)$, while its subgroup $\GG_{k,p}'= \UU_{k,p} \rtimes \SL(p)$ (which equals $\UU_{k,p}$ when $p=1$) fits into an exact sequence
$1 \to \GG_{k,p}' \to \GG_{k,p} \to \CC^* \to 1$. The generalized Demailly-Semple algebra is then $\calo((J_{k,p})_x)^{\GG_{k,p}'}$.

The Demailly-Semple algebras $\calo(J_{k})^{\UU_k}$ and their generalizations  have been studied for a long time. The invariant jet differentials
play a crucial role in the strategy devised by Green, Griffiths \cite{gg},
Bloch \cite{bloch}, Demailly \cite{dem,demelgoul}, Siu \cite{siu1,siu2,siu3} and others
to prove Kobayashi's 1970 hyperbolicity
conjecture \cite{kob} and the related conjecture of Green and Griffiths in the
special case of hypersurfaces in projective space. This strategy has been recently used successfully by Diverio, Merker and Rousseau in \cite{dmr} and 
then by the first author in \cite{berczi} to give  effective lower bounds for the degrees of generic hypersurfaces in $\PP_n$ for which the Green-Griffiths conjecture holds. 

In particular it has been a long-standing problem to determine whether the algebras of invariants $\calo((J_{k,p})_x)^{\GG_{k,p}'}$ and bi-invariants
$\calo((J_{k,p})_x)^{\GG_{k,p}'\times U_{n,x}}$ (where $U_{n,x}$ is a maximal
unipotent subgroup of $\GL(T_xX) \cong \GL(n)$)
are finitely generated as graded complex
algebras, and if so to provide explicit finite generating sets. In \cite{merker}
Merker showed that when $p=1$ and both $k$ and $n=\dim X$ are small then these algebras are finitely generated, and for $p=1$ and all $k$ and $n$ he provided an
algorithm which produces  finite sets of generators when they exist.  In this paper we will describe methods inspired by \cite{bsz} and
 the approach of \cite{DK} to non-reductive geometric invariant theory (GIT)
to prove the finite generation of $\calo((J_{k})_x)^{\UU_k}$ 
for all $n$ and $k\ge 2$ (from which the finite generation of the corresponding bi-invariants follows).  
In fact we will show that $\UU_k$ is a Grosshans subgroup of
$\SL(k)$, so that 
the algebra $\calo(\SL(k))^{\UU_k}$ is finitely generated and hence
every linear action of $\UU_k$ which extends to a linear action of $\SL(k)$ has finitely generated invariants. We will also give a geometric description of a finite set of generators for $\calo(\SL(k))^{\UU_k}$, and a geometric
description of the associated affine variety
$$\SL(k)/\!/\UU_k = \mathrm{Spec}(\calo(SL(k))^{\UU_k})$$
which leads to a geometric description of the affine variety
$$(J_k)_x/\!/\UU_k = \mathrm{Spec}(\calo((J_k)_x)^{\UU_k})$$
as a GIT quotient 
$$ ((J_k)_x \times (\SL(k)/\!/\UU_k))/\!/\SL(k)$$
by the reductive group $\SL(k)$, in the sense of classical geometric invariant
theory \cite{GIT}.
Similarly we expect that if $p>1$  and $k$ is sufficiently large
(depending on $p$) then $\GG_{k,p}'$ is a subgroup of $\SL(\syms^{\leq k}(p))$, where 
$$\syms^{\leq k}(p) =
\sum_{i=1}^k \dim \sym^i\CC^p,$$ such that the algebra
$\calo(\SL(\syms^{\leq k}(p)))^{\GG_{k,p}'}$ is finitely generated,
and thus that the algebra
and $\calo((J_{k,p})_x)^{\GG_{k,p}'}$
is also finitely generated, and we have a geometric description of the
associated affine variety
$$(J_{k,p})_x/\!/\GG_{k,p}'.$$

The layout of this paper is as follows. $\S$2 reviews the reparametrization groups $\GG_k$ and $\GG_{k,p}$ and their actions on jet bundles and jet differentials over a complex manifold $X$. Next $\S$3 reviews some of the results of \cite{DK} 
on non-reductive geometric invariant theory.
In $\S$4 we recall from \cite{bsz} a geometric description of the quotients by $\UU_k$ and $\GG_k$ of open subsets of $(J_k)_x$, and in $\S$5 this
is used to find explicit affine and projective embeddings of these quotients
and explicit embeddings of $\SL(k)/\UU_k$. In
$\S$6 we see that the complement of $\SL(k)/\UU_k$ in its closure for a suitable embedding in an affine space has codimension at least two. 
 In $\S$7 we conclude that $\UU_k$ is a Grosshans subgroup of $\SL(k)$
when $k \geq 2$, so
 that $\calo(SL(k))^{\UU_k}$ and $\calo((J_{k})_x)^{\UU_k}$ are finitely generated, and 
provide a geometric description of a finite set of generators of  $\calo(\SL(k))^{\UU_k}$.
Finally $\S$8 and $\S$9 discuss how to extend the results of $\S$6 and $\S$7 to the action of $\GG_{k,p}$  on the jet bundle
$J_{k,p} \to X$ of $k$-jets of germs of holomorphic maps from $\CC^p$ to $X$ for $p>1$.

\noindent\textbf{Acknowledgments}
We are indebted to Damiano Testa, who called our attention to the importance of the group $\GG_k$ in the Green-Griffiths problem. We would also like to thank Brent Doran for helpful discussions.
 
The first author warmly thanks Andras Szenes, his former PhD supervisor, for his patience and their joint work from which this paper has grown.

\section{Jets of curves and jet
differentials}\label{jetdifferentials}

Let $X$ be a complex $n$-dimensional manifold and let $k$ be a positive integer. Green and Griffiths
in \cite{gg} introduced the bundle $J_k \to X$
of $k$-jets of germs of parametrized curves in $X$; its
fibre over $x\in X$ is the set of equivalence classes of germs of holomorphic
maps $f:(\CC,0) \to (X,x)$, with the equivalence relation $f\sim g$
if and only if the derivatives $f^{(j)}(0)=g^{(j)}(0)$ are equal for
$0\le j \le k$. If we choose local holomorphic coordinates
$(z_1,\ldots, z_n)$ on an open neighbourhood $\Omega \subset X$
around $x$, the elements of the fibre $J_{k,x}$ are represented by the Taylor expansions 
\[f(t)=x+tf'(0)+\frac{t^2}{2!}f''(0)+\ldots +\frac{t^k}{k!}f^{(k)}(0)+O(t^{k+1}) \]
 up to order $k$ at
$t=0$ of
$\CC^n$-valued
maps
\[f=(f_1,f_2,\ldots, f_n)\]
on open neighbourhoods of 0 in $\CC$. Thus in these coordinates the fibre is
\[J_{k,x}=\left\{(f'(0),\ldots, f^{(k)}(0)/k!)\right\}=(\CC^n)^k,\]
which we identify with $\CC^{nk}$.  Note, however, that $J_k$ is not
a vector bundle over X, since the transition functions are polynomial, but
not linear.

Let $\GG_k$ be the group of $k$-jets  at the origin of local reparametrizations
of $(\CC,0)$
\[t \mapsto \varphi(t)=\a_1t+\a_2t^2+\ldots +\a_kt^k,\ \ \ \a_1\in
\CC^*,\a_2,\ldots,\a_k \in \CC,\] in which the composition law is
taken modulo terms $t^j$ for $j>k$. This group acts fibrewise on
$J_k$ by substitution. A short computation shows that this is a
linear action on the fibre:
\begin{eqnarray*} f \circ
\varphi(t)=f'(0)\cdot(\a_1t+\a_2t^2+\ldots
+\a_kt^k)+\frac{f''(0)}{2!}\cdot (\a_1t+\a_2t^2+\ldots
+\a_kt^k)^2+\ldots \\
\ldots +\frac{f^{(k)}(0)}{k!}\cdot (\a_1t+\a_2t^2+\ldots +\a_kt^k)^k 
\mbox{ (modulo $t^{k+1}$)}
\end{eqnarray*}
so the linear action of $\varphi$ on the $k$-jet $(f'(0),f''(0)/2!,\ldots,
f^{(k)}(0)/k!)$ is given by the following matrix multiplication:
\begin{equation}\label{matrixform}
(f'(0),f''(0)/2!,\ldots,f^{(k)}(0)/k!) \cdot 
\left(\begin{array}{ccccc}
\a_1 & \a_2 & \a_3 & \cdots  & \a_k \\
0        & \a_1^2 & 2\a_1\a_2 & \cdots &  \a_1\a_{k-1}+\ldots +\a_{k-1}\a_1 \\
0        & 0       & \a_1^3  & \cdots & 3\a_1^2\a_{k-2}+\ldots \\
\cdot    & \cdot   & \cdot    & \cdot &  \cdot \\
0 & 0 & 0 & \cdots  & \a_1^k 
\end{array} \right)
\end{equation}
where the matrix has general entry 
\[(G_{k})_{i,j}=\sum_{ s_1 \geq 1,\ldots, s_i \geq 1, \,\,\,
s_1+\ldots +s_i=j }\a_{s_1}\ldots \a_{s_i}\]
for $i,j\le k$. 

There is an exact sequence of groups:
\begin{equation}\label{exactsequence}
1 \rightarrow \UU_k \rightarrow \GG_k \rightarrow \CC^* \rightarrow
1,
\end{equation}
 where $\GG_k \to \CC^*$ is the morphism $\varphi \to
\varphi'(0)=\a_1$ in the notation used above, and
\[ \GG_k=\UU_k \rtimes \CC^*\]
is a semi-direct product. With the above identification, $\CC^*$ is
the subgroup of $\GG_k$ consisting of diagonal matrices satisfying $\a_2=\ldots =\a_k=0$ and
$\UU_k$ is the unipotent radical of $\GG_k$, consisting of matrices of the form above with $\a_1=1$. The action
of  $\l \in \CC^*$ on $k$-jets is thus described by
\[\l\cdot (f'(0),f''(0)/2!,\ldots ,f^{(k)}(0)/k!)=(\l f'(0),\l^2 f''(0)/2!,\ldots,
\l ^kf^{(k)}(0)/k!)\]

Let $\cale_{k,m}^n$ denote the vector space of complex valued polynomial functions
$$Q(u_1,u_2,\ldots, u_k)$$ of $u_1=(u_{1,1},\ldots,u_{1,n}),\ldots,u_k=(u_{k,1},\ldots,u_{k,n})$  of weighted degree $m$ with respect to
this $\CC^*$ action, where $u_i=f^{(i)}(0)/i!$; that is, such that
\[Q(\l u_1,\l^2 u_2,\ldots, \l^k u_k)=\l^m Q(u_1,u_2,\ldots,
u_k).\]
Thus elements of $\cale_{k,m}^n$ have the form
\[Q(u_1,u_2,\ldots, u_k)=\sum_{|i_1|+2|i_2|+\ldots
+k|i_k|=m}u_1^{i_1}u_2^{i_2}\ldots u_k^{i_k},\] where
$i_1=(i_{1,1},\ldots,i_{1,n}),\ldots, i_k=(i_{k,1},\ldots,i_{k,n})$ are multi-indices of length $n$.
There is an induced action of $\GG_k$ on the algebra
$\bigoplus_{m \geq 0} \cale_{k,m}^n$. Following Demailly (see \cite{dem}), we denote by $E_{k,m}^n$ (or $E_{k,m}$) the Demailly-Semple bundle whose fibre at $x$ consists of
the $\UU_k$-invariant polynomials on the fibre of $J_k$ at $x$ of weighted degree $m$, i.e those which satisfy
$$Q((f\circ \varphi)'(0),(f\circ \varphi)''(0)/2!, \ldots, (f\circ
\varphi)^{(k)}(0)/k!)$$
$$=\varphi'(0)^m\cdot Q(f'(0),f''(0)/2!,\ldots, f^{(k)}(0)/k!),$$
and we let $E_k^n=\oplus_m E_{k,m}^n$ denote the Demailly-Semple bundle of graded algebras of
invariants.

We can also consider higher
dimensional holomorphic surfaces in $X$, and therefore we fix a
parameter $1\le p \le n$, and study germs of maps $\CC^p \to X$.

Again we fix the degree $k$ of our map, and introduce the bundle
$J_{k,p} \to X$ of $k$-jets of maps $\CC^p \to X$. The
fibre over $x\in X$ is the set of equivalence classes of germs of holomorphic
maps $f:(\CC^p,0) \to (X,x)$, with the equivalence relation $f\sim
g$ if and only if all derivatives $f^{(j)}(0)=g^{(j)}(0)$ are equal for
$0\le j \le k$.

We need a description of the fibre $J_{k,p,x}$ in terms of local
coordinates as in the case when $p=1$.  Let $(z_1,\ldots, z_n)$ be local holomorphic
coordinates  on an open neighbourhood $\Omega \subset X$ around $x$,
and let $(u_1,\ldots, u_p)$ be local coordinates on $\CC^p$. The elements
of the fibre $J_{k,p,x}$ are $\CC^n$-valued maps
\[f=(f_1,f_2,\ldots, f_n)\]
on $\CC^p$,
and two maps represent the same jet if their Taylor expansions around
$\bz=0$
\[f(\bz)=x+\bz f'(0)+\frac{\bz^2}{2!}f''(0)+\ldots +\frac{\bz^k}{k!}f^{(k)}(0)+O(\bz^{k+1}) \]
coincide up to order $k$. Note that here
\[f^{(i)}(0)\in \Hom(\Sym^i\CC^p,\CC^n)\]
and in these coordinates the fibre is a finite-dimensional vector space
\[J_{k,p,x}=\left\{(f'(0),\ldots, f^{(k)}(0)/k!)\right\} \cong \CC^{n {k+p-1 \choose k-1}}.\]

Let $\GG_{k,p}$ be the group of $k$-jets of germs of biholomorphisms
of $(\CC^p,0)$. Elements of $\GG_{k,p}$ are represented by holomorphic maps
\begin{equation} \bu \to \varphi(\bu)=\Phi_1\bu+\Phi_2\bu^2+\ldots +\Phi_k\bu^k=
\sum_{\mathbf{i}\in \ZZ^p \setminus {0}}a_{i_1\ldots
i_p}u_1^{i_1}\ldots u_p^{i_p},\ \ \Phi_1 \text{ is non-degenerate}
\end{equation}
where $\Phi_i \in \Hom(\Sym^i\CC^p,\CC^p)$.
The group $\GG_{k,p}$ admits a natural fibrewise right action on $J_{k,p}$,
by reparametrizing the $k$-jets of holomorphic
$p$-discs. A computation similar to that in \cite{bsz} shows that

\[f \circ\varphi(\bu)=f'(0)\Phi_1\bu+(f'(0)\Phi_2+\frac{f''(0)}{2!}\Phi_1^2)\bu^2 
+ \ldots +\sum_{i_1+\ldots +i_l=d}
\frac{f^{(l)}(0)}{l!}\Phi_{i_1}\ldots \Phi_{i_l}\bu^l.\]
This defines a linear action of $\GG_{k,p}$ on the fibres $J_{k,p,x}$ 
of $J_{k,p}$ with the matrix representation
given by

\begin{equation}
\left(
\begin{array}{ccccc}
\Phi_1 & \Phi_2 & \Phi_3 & \ldots & \Phi_k \\
0 & \Phi_1^2 & \Phi_1\Phi_2 & \ldots &       \\
0 & 0 & \Phi_1^3 & \ldots & \\
. & . & . & . & . \\
& & & & \Phi_1^k
\end{array}
\right),
\end{equation}

where
\begin{itemize}
\item $\Phi_i \in \Hom(\Sym^i\CC^p,\CC^p)$ is a $p \times \dim (\sym^i \CC^p)$-matrix, the $i$th
degree component of the map $\Phi$, which is represented by a
map $(\CC^p)^{\otimes i} \to \CC^p$;
\item $\Phi_{i_1} \ldots \Phi_{i_l}$ is the matrix of the map $\sym^{i_1+\ldots
+i_l}(\CC^p)\to \sym^{l}\CC^p$, which is represented by
\[\sum_{\s \in \mathcal{S}_l}\Phi_{i_1} \otimes \cdots \otimes
\Phi_{i_l}:(\CC^p)^{\otimes i_1} \otimes \cdots \otimes
(\CC^p)^{\otimes i_l} \to (\CC^p)^{\otimes l};\]
\item the $(l,m)$ block of $\GG_{k,p}$ is $\sum_{i_1+\ldots +i_l=m}\phi_{i_1}\ldots \Phi_{i_l}$. The entries in these boxes are indexed by pairs $(\tau,\mu)$ where $\tau \in {p+l-1 \choose l-1}, \mu \in {p+m-1 \choose m-1}$ correspond to bases of $\Sym^l(\CC^p)$ and $\Sym^m(\CC^p)$.
\end{itemize}

\begin{exit}
For $p=2,k=3$, using the standard basis 
\[\left\{e_i,
e_ie_j,e_ie_je_k: 1\le i\le j\le k\le 2\right\}\]
of $(J_{3,2})_x$, we get the following
$9\times 9$ matrix for a general element of $\GG_{3,2}$:

\begin{equation}
\tiny{ \left(
\begin{array}{ccccccccc}
\a_{10} & \a_{01} & \a_{20} & \a_{11} & \a_{02} & \a_{30} & \a_{21} &
\a_{12} & \a_{03} \\
\b_{10} & \b_{01} & \b_{20} & \b_{11} & \b_{02} & \b_{30} & \b_{21} &
\b_{12} & \b_{03} \\
0 & 0 & \a_{10}^2 & \a_{10}\a_{01} & \a_{01}^2 & \a_{10}\a_{20} & \a_{10}\a_{11}+\a_{01}\a_{20} & \a_{10}\a_{02}+\a_{11}\a_{01} & \a_{01}\a_{02}\\
0 & 0 & \a_{10}\b_{10} & \a_{10}\b_{01}+\a_{01}\b_{10} & \a_{01}\b_{01} & \a_{10}\b_{20}+\a_{20}\b_{10} & P & Q & \a_{01}\b_{02}+\a_{02}\b_{01} \\
0 & 0 & \b_{10}^2 & \b_{10}\b_{01} & \b_{01}^2 & \b_{10}\b_{20} & \b_{10}\b_{11}+\b_{20}\b_{01} & \b_{01}\b_{11}+\b_{02}\b_{10} & \b_{01}\b_{02} \\
0 & 0 & 0 & 0 & 0 & \a_{10}^3 & \a_{10}^2\a_{01} & \a_{10}\a_{01}^2 &
\a_{01}^3 \\
0 & 0 & 0 & 0 & 0 & \a_{10}^2\b_{10} & \a_{10}\a_{10}\b_{01} &
\a_{10}\a_{01}\b_{01} & \a_{01}\b_{01}^2 \\
0 & 0 & 0 & 0 & 0 & \a_{10}\b_{10}^2 & \a_{10}\b_{10}\b_{01} &
\a_{10}\b_{01}\b_{01} & \a_{01}\b_{01}^2 \\
0 & 0 & 0 & 0 & 0 & \b_{10}^3 & \b_{10}^2\b_{01} & \b_{10}\b_{01}^2 &
\b_{01}^3 \\
\end{array}
\right)}
\end{equation}
where 
\[P=\a_{10}\b_{11}+\a_{11}\b_{10}+\a_{20}\b_{01}+\a_{01}\b_{20}
\text{ and } Q=\a_{01}\b_{11}+\a_{11}\b_{01}+\a_{02}\b_{10}+\a_{10}\b_{02}.\]
This is a subgroup of the standard parabolic $P_{2,3,4} \subset GL(9)$. The
diagonal blocks are the representations $\sym^i \CC^2$ for
$i=1,2,3$ of $GL(2)$, where $\CC^2$ is the standard representation of $GL(2)$.
\end{exit}

In general the linear group $\GG_{k,p}$ is generated along its first $p$ rows;
that is, the parameters in the first $p$ rows are independent, and
all the remaining entries are polynomials in these parameters. The
assumption on the parameters is that the determinant of the smallest
diagonal $p\times p$ block is nonzero; for the $p=2,k=3$ example
above this means that
\[ \det \left(\begin{array}{cc} \a_{10} & \a_{01} \\
\b_{10} & \b_{01} \end{array} \right) \neq 0.\]

The parameters in the $(1,m)$ block are indexed by a basis of $\Sym^m(\CC^p) \times \CC^p$, so they are of the form $\a_{\nu}^l$ where $\nu \in {p+m-1 \choose m-1}$ is an $m$-tuple and $1\le l \le p$. An easy computation shows that:

\begin{prop}\label{entry} The polynomial in the $(l,m)$ block and entry indexed by $$\tau=(\tau[1],\ldots, \tau[l]) \in {p+l-1 \choose l-1}$$ and $\nu \in {p+m-1 \choose m-1}$ is 
\begin{equation}
(\GG_{k,p})_{\tau,\nu}=\sum_{\nu_1+\ldots+\nu_l=\nu}\a_{\nu_1}^{\tau[1]}\a_{\nu_2}^{\tau[2]} \ldots \a_{\nu_l}^{\tau[l]}
\end{equation}
\end{prop}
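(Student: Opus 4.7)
The approach is to unpack the action of a reparametrization $\varphi \in \GG_{k,p}$ on jets at the level of basis monomials. The key observation is that this action is dual to the algebra endomorphism of the truncated polynomial ring $\CC[u_1,\ldots,u_p]/(\bu)^{k+1}$ which sends $u_j$ to the $j$th component $\varphi_j(\bu) = \sum_{i=1}^k\sum_{|\nu|=i} \a^j_\nu \bu^\nu$ of $\varphi$; once this is set up, the proposition becomes a matter of collecting coefficients.

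First I fix bookkeeping conventions. Identify a basis of $\sym^i\CC^p$ with the set of multi-indices $\nu \in \ZZ_{\geq 0}^p$ of weight $i$, equivalently with the set of non-decreasing $i$-tuples $\nu = (\nu[1],\ldots,\nu[i])$ whose entries lie in $\{1,\ldots,p\}$; the corresponding basis vector is the monomial $\bu^\nu = u_{\nu[1]}u_{\nu[2]}\cdots u_{\nu[i]}$. With these bases for source and target, the linear map $\Phi_i:\sym^i\CC^p \to \CC^p$ has matrix entries $(\Phi_i)_{j,\nu} = \a^j_\nu$, so $\Phi_i(\bu^\nu) = \sum_{j=1}^p \a^j_\nu u_j$.

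Next I invoke the identification, implicit in the discussion preceding the proposition, of the $(l,m)$ block of the $\GG_{k,p}$-matrix with the symmetric product of linear maps
$$\sum_{i_1+\ldots+i_l=m}\Phi_{i_1}\Phi_{i_2}\cdots \Phi_{i_l}:\sym^m\CC^p \to \sym^l\CC^p.$$
Acting on the basis monomial $\bu^\nu$ with $|\nu|=m$, each summand unfolds by multilinear expansion to
$$\Phi_{i_1}\cdots \Phi_{i_l}(\bu^\nu) = \sum_{\substack{\nu_1+\ldots+\nu_l=\nu \\ |\nu_j|=i_j}} \Phi_{i_1}(\bu^{\nu_1})\cdots \Phi_{i_l}(\bu^{\nu_l}).$$
After substituting $\Phi_{i_j}(\bu^{\nu_j}) = \sum_s \a^s_{\nu_j} u_s$ and summing over all $(i_1,\ldots,i_l)$ with $\sum i_j = m$, the weight constraint $|\nu_j|=i_j$ disappears and the total collapses to a sum over all ordered decompositions $\nu = \nu_1+\ldots+\nu_l$ of the multi-index $\nu$ into $l$ nonzero parts. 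Expanding the product of linear forms and reading off the coefficient of the basis vector indexed by the ordered $l$-tuple $\tau = (\tau[1],\ldots,\tau[l])$ yields precisely
$$\sum_{\nu_1+\ldots+\nu_l=\nu}\a^{\tau[1]}_{\nu_1}\a^{\tau[2]}_{\nu_2}\cdots \a^{\tau[l]}_{\nu_l},$$
as claimed.

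The only point requiring care is the bookkeeping around symmetrization: a basis element of $\sym^l\CC^p$ corresponds to an $\mathcal{S}_l$-orbit of ordered tuples $\tau$, and one must commit to a single labeling (for instance, non-decreasing tuples) so that matrix entries are extracted as literal coefficients of the chosen monomial without any combinatorial multiplicity. Once this convention is pinned down, no genuine obstacle remains: the proposition is simply a restatement in coordinates of the composition law displayed above, and agreement with the worked $(p,k)=(2,3)$ matrix in the preceding example provides an immediate sanity check.
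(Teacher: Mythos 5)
The paper offers no proof of this proposition beyond the preceding remark that it follows from ``an easy computation,'' and your argument is precisely that computation: expanding $\sum_{i_1+\cdots+i_l=m}\Phi_{i_1}\cdots\Phi_{i_l}$ on basis monomials of $\Sym^m\CC^p$ and reading off the coefficient of the monomial indexed by $\tau$, exactly as set up in the bullet points the paper gives just before the statement. Your closing caveat about symmetrization conventions is well placed --- it is exactly where a literal sum over ordered decompositions can differ from the displayed $(p,k)=(2,3)$ example by combinatorial factors when $\tau$ has repeated entries --- but the proof is correct and takes the same route the paper intends.
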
 

Note that $\GG_{k,p}$ is an extension of its unipotent radical $\UU_{k,p}$ by
$GL(p)$; that is, we have an exact sequence
\[1 \rightarrow \UU_{k,p} \rightarrow \GG_{k,p} \rightarrow GL(p) \rightarrow
1,\] and $\GG_{k,p}$ is the semi-direct product $\UU_{k,p}\rtimes GL(p)$. Here $\GG_{k,p}$ has dimension $p \times \syms^{\leq k}(p)$ where
$\syms^{\leq k}(p) = \dim (\oplus_{i=1}^k \sym^i \CC^p)$, and is a subgroup of the standard 
parabolic subgroup $P_{p,\syms^2(p),\ldots ,\syms^k(p)}$
of $GL(\syms^{\leq k}(p))$
where $\syms^i(p) = \dim(\sym^i\CC^p)$. We define $\GG_{k,p}'$ to be the subgroup
of $\GG_{k,p}$ which is  the semi-direct product 
$$\GG_{k,p}' = \UU_{k,p}\rtimes SL(p)$$
(so that $\GG_{k,p}' = \UU_{k,p}$ when $p=1$) fitting into the exact 
sequence
\[1 \rightarrow \UU_{k,p} \rightarrow \GG_{k,p}' \rightarrow SL(p) \rightarrow
1.\]

The action of the maximal torus $(\CC^{*})^p\subset GL(p)$ of the
Levi subgroup of $\GG_{k,p}$ is
\begin{equation}\label{nweights}
(\l_1,\ldots, \l_p)\cdot f^{(i)}=(\l_1^i\frac{\partial^if}{\partial
u_1^i},\ldots , \l_1^{i_1}\cdots \l_p^{i_p}
\frac{\partial^if}{\partial u_1^{i_1} \cdots \partial u_p^{i_p}}
\ldots \l_p^{i} \frac{\partial^if}{\partial u_p^i} )
\end{equation}

We introduce the {\em Green-Griffiths} vector bundle
$E_{k,p,m}^{GG} \to X$, whose fibres are complex-valued polynomials
$$Q(f'(0),f''(0)/2!,\ldots ,f^{(k)}(0)/k!)$$ 
on the fibres of $J_{k,p}$, having
weighted degree $(m, \ldots, m)$ with respect to the action
(\ref{nweights}) of $(\CC^*)^p$. That is, for $Q\in E_{k,p,m}^{GG}$
\[Q(\bl f'(0),\bl f''(0)/2! ,\ldots, \bl f^{(k)}(0)/k!)=\l_1^m \cdots \l_p^mQ(f'(0),f''(0)/2!, \ldots ,f^{(k)}(0)/k!)\]
for all $\bl \in \CC^p$ and $(f'(0),f''(0)/2!,\ldots, f^{(k)}(0)/k!) \in
J_{k,p,m}$.

\begin{definition}
The generalized Demailly-Semple bundle $E_{k,p,m}\to X$
over $X$ has fibre consisting of the $\GG_{k,p}'$-invariant jet differentials of
order $k$ and weighted degree $(m,\ldots, m)$; that is, the
 complex-valued polynomials
$Q(f'(0),f''(0)/2!,\ldots ,f^{(k)}(0)/k!)$ on the fibres of $J_{k,p}$ which 
transform under any reparametrization $\phi\in \GG_{k,p}$ of
$(\CC^p,0)$ as
\[Q(f \circ \phi)=(J_{\phi})^mQ(f)\circ \phi,\]
where $J_\phi = \det \Phi_1$ denotes the Jacobian of $\phi$ at 0. The
generalized Demailly-Semple bundle of algebras $E_{k,p}=\oplus_{m\geq 0}
E_{k,p,m}$ is the associated graded algebra of $\GG_{k,p}'$-invariants, whose
fibre at $x \in X$ is the generalized Demailly-Semple algebra $\calo((J_{k,p})_x)^{\GG_{k,p}'}$.
\end{definition}

The determination of a suitable generating set for the invariant jet differentials when $p=1$ is important in the longstanding 
 strategy to prove the Green-Griffiths conjecture. It has been suggested in a series of papers \cite{gg,dem,rousseau,merker,dmr, merker2}
that the Schur decomposition of the Demailly-Semple algebra, together with good
estimates of the higher Betti numbers of the Schur bundles and an asymptotic estimation of the Euler charactristic, should result in a positive lower bound for the global sections of the Demailly-Semple jet differential bundle. 

\section{Geometric invariant theory}

Suppose now that $Y$ is a complex quasi-projective variety on which a 
linear algebraic group $G$ acts.
 For geometric invariant theory (GIT)
we need a  {linearization} of the action; that is, a
line bundle $L$ on $Y$ and a lift $\mathcal{L}$ of the action of $G$ to $L$.
 Usually $L$ is ample, and hence (as it makes no difference for GIT
if we replace $L$ with $L^{\otimes k}$ for any integer $k>0$)
 we can assume that for some
projective embedding
$Y \subseteq \PP^n$
the action
of $G$ on $Y$ extends to an action on $\PP^n$ given by a
representation
$\rho:G\rightarrow GL(n+1),$
and take for $L$ the hyperplane line bundle on $\PP^n$.

For classical GIT developed by Mumford \cite{GIT} (cf. also \cite{Dolg,Mukai,New,PopVin}) we require the complex algebraic group $G$ to be reductive. 
Let $Y$ be a {projective} complex variety with an action of a
complex reductive  
group $G$ and linearization $\mathcal{L}$ with respect to an ample line
bundle $L$ on $Y$. Then $y \in Y$ is {\em
semistable} for this linear action if there exists some $m > 0$
and $f \in H^0(Y, L^{\otimes m})^G$ not vanishing at $y$,
and $y$ is {\em stable} if also the action of $G$
on 
the open subset
$$ Y_f := \{ x \in Y \ | \ f(x) \neq 0 \}$$
 is closed with all stabilizers finite.
 $Y^{ss}$ has
a projective categorical quotient $Y^{ss} \to Y/\!/G$, which restricts 
on the set of stable points to a geometric
quotient $Y^s \to Y^s/G$ (see \cite{GIT} Theorem 1.10). The morphism 
$Y^{ss} \to Y/\!/G$ is surjective, and identifies $x, y \in Y^{ss}$ if and only if the closures of the $G$-orbits of $x$ and $y$ meet in $Y^{ss}$; moreover each point in $Y/\!/G$ is represented by a unique closed $G$-orbit in $Y^{ss}$.
There is an induced action of $G$ on the
homogeneous coordinate ring
\[\hat{\calo}_L(Y) = \bigoplus_{k \geq 0} H^0(Y, L^{\otimes k}) \]
of $Y$.
The
subring ${\hat{\calo}}_L(Y)^G$ consisting of the elements of ${\hat{\calo}}_L(Y)$
left invariant by $G$ is a finitely generated graded complex algebra
because $G$ is reductive, and
 the GIT quotient $Y/\!/G$ is the projective variety  $\Proj({\hat{\calo}}_L(Y)^G)$ \cite{GIT}.
The subsets $Y^{ss}$ and $Y^s$ of $Y$ are characterized by the following properties (see
\cite[Chapter 2]{GIT} or \cite{New}).

\begin{prop} (Hilbert-Mumford criteria)
\label{sss} (i) A point $x \in Y$ is semistable (respectively
stable) for the action of $G$ on $Y$ if and only if for every
$g\in G$ the point $gx$ is semistable (respectively
stable) for the action of a fixed maximal torus of $G$.

\noindent (ii) A point $x \in Y$ with homogeneous coordinates $[x_0:\ldots:x_n]$
in some coordinate system on $\PP^n$
is semistable (respectively stable) for the action of a maximal
torus of $G$ acting diagonally on $\PP^n$ with
weights $\a_0, \ldots, \a_n$ if and only if the convex hull
$$\conv \{\a_i :x_i \neq 0\}$$
contains $0$ (respectively contains $0$ in its interior).
\end{prop}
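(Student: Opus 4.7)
The plan is to derive both parts from the numerical Hilbert--Mumford criterion expressed in terms of one-parameter subgroups (1-PS). Recall that for any 1-PS $\lambda : \CC^* \to G$ and any nonzero lift $\tilde{x} \in \CC^{n+1}$ of $x$, one chooses a basis of $\CC^{n+1}$ diagonalising $\lambda$, writes $\tilde{x} = \sum \tilde{x}_i$ in this basis with $\lambda(t)\cdot \tilde{x}_i = t^{r_i} \tilde{x}_i$, and sets $\mu(x,\lambda) = -\min\{ r_i : \tilde{x}_i \neq 0\}$. The numerical criterion asserts that $x$ is semistable (respectively stable) for $G$ if and only if $\mu(x,\lambda) \geq 0$ (respectively $>0$) for every non-trivial 1-PS of $G$. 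I would quote this from \cite[Ch.~2]{GIT}; its proof uses reductivity of $G$ in an essential way, via the fact that two disjoint closed $G$-invariant subsets of $Y$ can be separated by a $G$-invariant section of some power of $L$.

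For (i), the key observation is that every 1-PS of the reductive group $G$ is $G$-conjugate into the fixed maximal torus $T$, together with the transformation rule $\mu(gx,\; h\lambda h^{-1}) = \mu(h^{-1}gx, \lambda)$. Running through all 1-PS of $G$ and testing the numerical criterion at $x$ is therefore equivalent to running through all $g \in G$ and all 1-PS of $T$ and testing the numerical criterion at $gx$. A second application of the numerical criterion, now to the $T$-action on $Y$, rephrases the latter condition as: $gx$ is semistable for $T$ for every $g \in G$. The stable case is identical with strict inequalities.

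For (ii), when $T$ acts diagonally with weights $\alpha_0,\ldots,\alpha_n \in X^*(T)$ and $\lambda \in X_*(T)$ is a 1-PS, the $i$-th coordinate is scaled by $t^{\langle \lambda, \alpha_i\rangle}$, so
$$\mu(x,\lambda) = -\min\{ \langle \lambda, \alpha_i\rangle : x_i \neq 0\}.$$
Extending $\lambda$ from $X_*(T)$ to the real vector space $X_*(T)\otimes \RR$ by rational approximation (both sides of the inequality scale linearly in $\lambda$), the condition $\mu(x,\lambda) \geq 0$ for every $\lambda$ becomes the statement that no real linear functional on $X_*(T)\otimes \RR$ is strictly positive on every weight $\alpha_i$ with $x_i \neq 0$. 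By the separating-hyperplane theorem from convex geometry, this is exactly the statement that $0 \in \conv\{\alpha_i : x_i \neq 0\}$. Strengthening the inequality to $\mu(x,\lambda) > 0$ for all non-trivial $\lambda$ yields, by the same argument with strict inequalities, the condition that $0$ lies in the relative interior of $\conv\{\alpha_i : x_i \neq 0\}$, which gives the stability criterion.

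The main obstacle is the numerical criterion itself (Step~1): the reverse implication, producing an invariant section not vanishing at $x$ from the inequality $\mu(x,\lambda) \geq 0$, is the non-trivial content and relies essentially on reductivity (via finite generation of invariants and separation of closed invariant subsets). The remaining arguments are formal conjugacy and linear/convex algebra. It is worth emphasising that the failure of both the separation statement and the conjugacy reduction for non-reductive $G$ is precisely what motivates the non-reductive GIT machinery reviewed in the remainder of $\S 3$ and applied to $\UU_k \subset \SL(k)$ later in the paper.
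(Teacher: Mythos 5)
The paper gives no proof of this proposition---it is recalled from \cite[Chapter 2]{GIT} and \cite{New}---and your sketch is precisely the standard argument found in those references: the numerical criterion via $\mu(x,\lambda)$, conjugacy of one-parameter subgroups into a fixed maximal torus together with the rule $\mu(gx,h\lambda h^{-1})=\mu(h^{-1}gx,\lambda)$ for part (i), and the separating-hyperplane theorem applied to the weight set for part (ii). The only slip is in the last step of (ii): requiring $\mu(x,\lambda)>0$ for \emph{every} non-trivial $\lambda$ forces $0$ to lie in the interior of $\conv\{\a_i : x_i \neq 0\}$ in the full space of one-parameter subgroups paired against characters (as the proposition states), not merely in its relative interior---if the weights $\a_i$ with $x_i\neq 0$ fail to span, any $\lambda$ orthogonal to their span gives $\mu(x,\lambda)=0$, the point is fixed by a positive-dimensional subtorus, and $x$ is not stable even though $0$ may lie in the relative interior.
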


Similarly if a complex reductive group $G$ acts linearly on an affine variety
$Y$ then we have a GIT quotient 
$$Y/\!/G = \Spec (\calo(Y)^G)$$
which is the affine variety associated to the finitely generated algebra
$\calo(Y)^G$ of $G$-invariant regular functions on $Y$. In this case $Y^{ss} = Y$ and the inclusion $\calo(Y)^G \hookrightarrow \calo(Y)$ induces a morphism of
affine varieties $Y \to Y/\!/G$. 

Now suppose that $H$ is any complex linear algebraic
group, with unipotent radical $U \unlhd H$ (so that
$R=H/U$ is reductive and $H$ is isomorphic to the semi-direct product $ U \rtimes R$), acting linearly on a complex projective variety $Y$ with respect to an ample line bundle $L$. Then  $\Proj({\hat{\calo}}_L(Y)^H)$ is not in general well-defined as a projective variety,
since the ring of invariants
$${\hat{\calo}}_L(Y)^H = \bigoplus_{k \geq 0} H^0(Y, L^{\otimes k})^H$$
is not necessarily finitely generated as a graded complex algebra, and so it is not obvious how GIT might be generalised to this situation (cf. \cite{DK,F1,F2,GP1,GP2,KPEN}). However in some cases it is known that $\hat{\calo}_L(Y)^U$ is finitely generated, which implies that 
\[{\hat{\calo}}_L(Y)^H = \left(\bigoplus_{k \geq 0} H^0(Y, L^{\otimes k})^U \right)^{H/U}\] 
is finitely generated and hence the {\em enveloping quotient} in the sense of \cite{DK}is given by the associated projective variety
\[Y/\!/H=\Proj(\hat{\calo}_L(Y)^H).\] 
Similarly if $Y$ is affine and $H$ acts linearly on $Y$ with $\calo(Y)^H$ finitely generated, then we have the enveloping quotient 
$$Y/\!/H = \Spec (\calo(Y)^H).$$

There is a morphism 
\[q: Y^{ss} \to Y/\!/H,\] 
from an open subset $Y^{ss}$ of $Y$ (where $Y^{ss} = Y$ when $Y$ is affine), which restricts to a geometric quotient 
\[q:Y^s \to Y^s/H\]
for an open subset $Y^s \subset Y^{ss}$. However in contrast with the reductive
case,  the morphism $q: Y^{ss} \to Y/\!/H$ is not in general surjective; indeed the image of $q$ is not in general a subvariety of $Y/\!/H$, but is only a constructible subset.

If there is a complex reductive group $G$ containing the unipotent radical $U$
of $H$ such that the algebra $\calo(G)^U$ is finitely generated and the action
of $U$ on $Y$ extends to a linear action of $G$, then 
$$\calo(Y)^U \cong (\calo(Y) \otimes \calo(G)^U)^G$$
is finitely generated and hence so is
$$\calo(Y)^H = (\calo(Y)^U)^{H/U}$$
(or if $Y$ is projective with an ample linearisation $L$ then
$\hat{\calo}_L(Y)^U$ is finitely generated and hence so is
$\hat{\calo}_L(Y)^H$). In this situation we say that $U$ is a Grosshans 
subgroup of $G$ (cf. \cite{Grosshans,Grosshans2}). Then geometrically
$G/U$ is a quasi-affine variety with $\calo(G/U) \cong \calo(G)^U$, and 
it has a
canonical affine embedding as an open subvariety of the affine variety
$$G/\!/U = \mathrm{Spec}(\calo(G)^U)$$
with complement of codimension at least two. Moreover if a linear action
of $U$ on an affine variety $Y$ extends to a linear action of $G$ then
$$Y/\!/U \cong (Y \times G/\!/U)/\!/G$$
(and a corresponding result is true if $Y$ is projective). Conversely if
we can find an embedding of $G/U$ as an open subvariety of an affine variety
$Z$ with complement of codimension at least two, then
$$\calo(G)^U \cong \calo(Z)$$
is finitely generated and $G/\!/U \cong Z$.

Suppose that  $U$ is a unipotent group with a reductive group $R$
of automorphisms of $U$ given by a homomorphism $\phi:R \to \mathrm{Aut}(U)$
such that $R$ contains a central one-parameter subgroup
$\lambda:\CC^* \to R$ for which the weights of
the induced $\CC^*$ action on the Lie algebra $\lieu$ of
$U$ are all nonzero.  Then we can form the semi-direct
product
$$\hat{U} = \CC^* \ltimes U \subseteq R \ltimes U$$
given by $\CC^* \times U$ with group multiplication
$$(z_1,u_1).(z_2,u_2) = (z_1 z_2, (\lambda(z_2^{-1})(u_1))u_2).$$
The groups $\GG_k = \UU_k \rtimes \CC^*$ and $\GG_{k,p} = \UU_{k,p}
\rtimes \GL(p)$ which act on the fibres of the jet bundles
$J_k$ and $J_{k,p}$ are of this form. We will use this structure to study the Demailly-Semple
algebras of invariant jet differentials
$E_k^n$ and $E_{k,p}^n$ and prove

\begin{theorem}\label{fingenerated}
The fibres $\calo((J_{k})_x)^{\UU_{k}} $ 
and $\calo((J_{k,p})_x)^{\GG_{k,p}'} $ 
of the bundles $E_k^n$ and $E_{k,p}^n$ are finitely generated
graded complex algebras. 
\end{theorem}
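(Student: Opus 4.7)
My plan is to follow the Grosshans-subgroup strategy summarised at the end of Section 3. The crux is to show that $\UU_k$ is a Grosshans subgroup of $\SL(k)$ for $k\geq 2$ (and, for the second statement, that the analogous property holds for $\GG_{k,p}'$ inside $\SL(\syms^{\leq k}(p))$ for $p\geq 2$ and $k$ sufficiently large). Once this is established, the theorem follows formally: the fibrewise linear $\UU_k$-action on $(J_k)_x$ is given by a matrix representation whose entries are polynomials in the parameters $\alpha_1,\ldots,\alpha_k$, so it extends to a linear $\SL(k)$-action on $(J_k)_x$, and then
\[\calo((J_k)_x)^{\UU_k} \;\cong\; \bigl(\calo((J_k)_x)\otimes\calo(\SL(k))^{\UU_k}\bigr)^{\SL(k)}\]
is finitely generated because $\calo(\SL(k))^{\UU_k}$ is finitely generated and $\SL(k)$ is reductive.

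For the Grosshans step itself I would proceed geometrically. Following \cite{bsz}, I would construct an explicit $\UU_k$-invariant morphism from $\SL(k)$ into an affine space, built from polynomial functions in the matrix entries; this map factors through $\SL(k)/\UU_k$ and exhibits the latter as a quasi-affine variety. Let $Z$ denote the closure of the image. The critical next step is to prove that the complement $Z\setminus (\SL(k)/\UU_k)$ has codimension at least two in $Z$. Given this, the standard Grosshans criterion (cf.\ \cite{Grosshans,Grosshans2}) yields $\calo(Z)\cong\calo(\SL(k))^{\UU_k}$, so this algebra is finitely generated and $\SL(k)/\!/\UU_k\cong Z$, proving that $\UU_k$ is Grosshans in $\SL(k)$.

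The codimension-two estimate will be the main obstacle. My plan is to stratify the boundary $Z\setminus(\SL(k)/\UU_k)$ according to the combinatorial flag-theoretic type of the degeneration, exploiting the very rigid polynomial structure of the matrix entries $(G_k)_{i,j}$ in \eqref{matrixform}: each entry is a weighted-homogeneous polynomial in a restricted set of the parameters $\alpha_i$, which should rule out divisorial components in the boundary. I would organise each stratum according to a specific pattern of vanishing among the $\alpha_i$ (equivalently, a degeneration of the associated flag), and bound its dimension directly. The flag-geometric picture of $\UU_k$-orbits from \cite{bsz}, recalled in Section 4, should provide the right framework for this stratification, while the explicit affine and projective embeddings of Section 5 pin down the boundary concretely.

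For $p>1$ the same strategy applies, with $\GG_{k,p}'$ embedded in $\SL(\syms^{\leq k}(p))$ via the matrix representation and Proposition \ref{entry} used to parametrise the boundary strata of the analogous affine embedding. I would first establish the Grosshans property of $\UU_{k,p}$ inside $\SL(\syms^{\leq k}(p))$, and then observe that the residual $\GG_{k,p}'/\UU_{k,p}\cong \SL(p)$-quotient is reductive and therefore preserves finite generation. The combinatorics are substantially more involved in this case, which is why $k$ must be taken sufficiently large relative to $p$ for the dimension count to go through; carrying out the analogous codimension-two analysis is the content of Sections $\S$8--$\S$9.
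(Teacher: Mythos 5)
Your plan is essentially the paper's own argument: reduce everything to showing $\UU_k$ is a Grosshans subgroup of $\SL(k)$ via an explicit affine embedding of $\SL(k)/\UU_k$ (as the orbit of $p_k\otimes e_1^{\otimes K}$ in $\wsymk\otimes(\CC^k)^{\otimes K}$) whose boundary has codimension at least two, and then conclude by $\calo((J_k)_x)^{\UU_k}\cong(\calo((J_k)_x)\otimes\calo(\SL(k))^{\UU_k})^{\SL(k)}$ with $\SL(k)$ reductive. One caveat: for $p>1$ the paper itself does not carry out the codimension-two analysis — Sections 8--9 state it only as Conjectures \ref{mainthmp}--\ref{fingenerated3} — so that half of the statement is not actually established by either your outline or the paper.
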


Thus we have  non-reductive GIT quotients 
$$ (J_{k})_x /\!/\UU_{k} = \Spec(\calo((J_{k})_x)^{\UU_{k}} )$$
and
$$ (J_{k,p})_x /\!/\GG'_{k,p} = \Spec(\calo((J_{k,p})_x)^{\GG_{k,p}'})$$ and we would like to understand them geometrically. There is a crucial difference here from the case of reductive group actions, even though the invariants are finitely generated: when $H$ is a non-reductive group we cannot describe $Y/\!/H$ geometrically as $Y^{ss}$ modulo some equivalence relation. Instead
our aim is to
 use methods inspired by \cite{bsz} to study these 
geometric invariant theoretic quotients and the associated algebras of invariants.

Here a crucial ingredient would be to find an open subset $W$ of $(J_{k,p})_x $ with a geometric quotient $W/\GG_{k,p}'$ embedded as an open
subset of an affine variety $Z$ such that the complement of $W/\GG_{k,p}'$ in
$Z$ has (complex) codimension at least two, and the complement of $W$ in
$(J_{k,p})_x $ has codimension at least two. For then we would have
$$\calo((J_{k,p})_x ) = \calo(W)$$
and
$$\calo((J_{k,p})_x )^{\GG_{k,p}'} = \calo(W)^{\GG_{k,p}'}
= \calo(W/\GG_{k,p}') = \calo(Z),$$
and it follows that $\calo((J_{k,p})_x )^{\GG_{k,p}'}$ is finitely generated since $Z$ is affine, and that
$$Z = \Spec (\calo(Z)) = \Spec(\calo((J_{k,p})_x)^{\GG_{k,p}'})
= ((J_{k,p})_x ) /\!/\GG_{k,p}'.$$
Similarly if we can find a complex reductive group $G$ containing $\GG_{k,p}'$
as a subgroup, and an embedding of $G/\GG_{k,p}'$ as an open subset of an affine
variety $Z$ with complement of codimension at least two, then $\calo(G)^{\GG_{k,p}'}$ is finitely generated. 
It follows as above that if $Y$ is any affine variety on which $G$
acts linearly then
$$\calo(Y)^{\GG_{k,p}'} \cong (\calo(Y) \otimes \calo(G)^{\GG_{k,p}'})^G$$
is finitely generated, and hence so is $ \calo(Y)^{\GG_{k,p}} =
(\calo(Y)^{\GG_{k,p}'})^{\CC^*}$, and similarly 
$\hat{\calo}_L(Y)^{\GG_{k,p}'} $ and $ \hat{\calo}_L(Y)^{\GG_{k,p}}$
are finitely generated if $Y$ is any projective variety wtih an ample line bundle $L$ on which $G$ acts linearly.

We can use the ideas of \cite{bsz} to look for suitable affine varieties $Z$ as above, and in particular to prove

\begin{theorem} \label{Gros} 
$\GG_{k,p}'$ is a 
 subgroup of the special linear group 
$ \SL(\mathrm{sym}^{\le k}p)$ where 
$$ \mathrm{sym}^{\le k}p = \sum_{i=1}^k \dim \sym^i \CC^p =  \left( \begin{array}{c} k+p-1\\ k-1 \end{array} \right) $$ 
such that the algebra of invariants $\calo(\SL(\mathrm{sym}^{\le k}p))^{\GG_{k,p}'}$ is finitely generated, and
every linear action of $\GG_{k,p}'$ or $\GG_{k,p}$ on an affine
or projective variety (with an ample linearisation) which extends to a linear action of $ \GL(\mathrm{sym}^{\le k}p)$ has finitely generated invariants.
\end{theorem}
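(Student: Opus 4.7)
The plan is to realize $\GG_{k,p}'$ as a subgroup of $G=\SL(\syms^{\le k}p)$ and then reduce the finite generation statement to the assertion that the unipotent radical $\UU_{k,p}$ is a Grosshans subgroup of $G$. The inclusion itself is essentially bookkeeping: the matrix description following Proposition \ref{entry} already exhibits $\GG_{k,p}$ as a subgroup of the standard parabolic $P_{p,\syms^2(p),\ldots,\syms^k(p)}\subset \GL(\syms^{\le k}p)$, with diagonal blocks the representations $\sym^i\Phi_1$ of the Levi factor $\GL(p)$. A direct computation gives $\det(\sym^i\Phi_1)=(\det\Phi_1)^{c_i}$ for some positive integer $c_i$, so when $\Phi_1\in \SL(p)$ every element of $\GG_{k,p}'$ has total determinant $1$ and thus lies in $\SL(\syms^{\le k}p)$.

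Next, write
\[
\calo(G)^{\GG_{k,p}'}=\bigl(\calo(G)^{\UU_{k,p}}\bigr)^{\SL(p)}.
\]
Because $\SL(p)$ is reductive, if $\calo(G)^{\UU_{k,p}}$ is finitely generated then so is the left side. Hence everything reduces to showing that $\UU_{k,p}$ is a Grosshans subgroup of $G$, and by the criterion recalled in $\S$3 this is equivalent to producing an embedding of the quasi-affine homogeneous space $G/\UU_{k,p}$ as an open subvariety of an affine variety $Z$ whose complement has codimension at least two. Once this is in hand, the standard chain of reductions delivers the remaining claims: for any affine $Y$ with a linear $\GL(\syms^{\le k}p)$-action,
\[
\calo(Y)^{\GG_{k,p}'}\cong\bigl(\calo(Y)\otimes\calo(G)^{\GG_{k,p}'}\bigr)^{G}
\]
is finitely generated, $\calo(Y)^{\GG_{k,p}}=(\calo(Y)^{\GG_{k,p}'})^{\CC^{*}}$ is then finitely generated by reductivity of $\CC^{*}$, and the projective case follows by applying the graded analogue to the affine cone of the chosen projective embedding.

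The core geometric task, which I expect to be the main obstacle, is the construction of the affine variety $Z$ containing $G/\UU_{k,p}$ with complement of codimension at least two. The strategy is to imitate the $p=1$ construction to be developed in $\S$5--$\S$6: use the geometric description from \cite{bsz} of the open $\UU_k$-orbit space of $(J_k)_x$ via ``test curves'' to build explicit affine and projective embeddings of $\SL(k)/\UU_k$, and then verify via a boundary stratification that the complement in the closure is cut out by conditions involving at least two independent degenerations. For $p>1$ one replaces jets of curves by jets of holomorphic $p$-discs $\CC^p\to\CC^{\syms^{\le k}p}$; Proposition \ref{entry} supplies explicit polynomial invariant coordinates on $G/\UU_{k,p}$, and these coordinates furnish an embedding into an affine variety $Z$. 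The complement stratifies according to the rank of the leading block $\Phi_1$ and the degeneracies of the compositions $\Phi_{i_1}\cdots\Phi_{i_l}$, and the delicate step is to show that each stratum has codimension at least two. This forces one to exploit the combinatorial structure of the products $\Phi_{i_1}\cdots\Phi_{i_l}$, and is noticeably harder than the $p=1$ case, where only the single chain of blocks $\Phi_1,\Phi_1^2,\ldots,\Phi_1^k$ enters.
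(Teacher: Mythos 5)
Your overall architecture is the right one and matches the paper's: exhibit $\GG_{k,p}'$ inside $\SL(\syms^{\le k}(p))$ via the block matrix description, reduce finite generation of $\calo(G)^{\GG_{k,p}'}$ to a Grosshans-type property using reductivity of $\SL(p)$ and $\CC^*$, and reduce that in turn to finding an affine variety containing the homogeneous space as an open subset with boundary of codimension at least two. But the proposal stops exactly where the real work begins: the codimension-two estimate is asserted as "the delicate step" and never carried out, and it is the entire technical content of the paper. For $p=1$ this is Theorem \ref{mainthm}, whose proof occupies all of $\S 6$: one must first twist the Pl\"ucker-type embedding by $(\CC^k)^{\otimes K}$ with $K\equiv 1 \bmod \binom{k+1}{2}$ so that the stabiliser of $p_k\otimes e_1^{\otimes K}$ in $\SL(k)$ is exactly $\UU_k$ rather than $\UU_k\rtimes F_k$ (Lemma \ref{mainlem} -- your proposal never addresses this finite residual stabiliser), then reduce boundary points to torus limits (Lemma \ref{lempart1}), classify the maximal boundary strata by the one-parameter subgroups $\l^\s,\mu^\s$, compute the limits of the stabiliser groups $G^\s=\lim_{t\to 0}G_{\l^\s(t)\zdis}$, and in a three-way case analysis on $\s$ produce an extra unipotent stabiliser of dimension forcing codimension $\ge 2$ (Lemma \ref{lempart2}). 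None of this is present or sketched in enough detail to be checkable in your proposal, and the hypotheses it imposes ($k\ge 4$, with $k=2,3$ quoted from Rousseau; $M$, i.e. $b/a$, sufficiently large) do not appear in your statement of the strategy.

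A second, more structural point: for $p>1$ the paper itself does \emph{not} prove the statement you are asked to prove. The geometric input you need -- that the boundary of $\overline{\SL(n)(p_{k,p}\otimes(e_1\wedge\ldots\wedge e_p)^{\otimes K})}$ has codimension at least two -- is stated only as Conjecture \ref{mainthmp}, and the resulting finite generation as Conjectures \ref{Gros3} and \ref{fingenerated3}; only the $p=1$ case is established (Theorem \ref{Gros2}). Note also that the paper's (conjectural) route for $p>1$ works directly with $G/\GG_{k,p}'$ and its orbit closure, with the Borel $B_k$ replaced by the standard parabolic stabilising the filtration by $\CC^p\oplus\Sym^2\CC^p\oplus\cdots$, rather than first establishing that $\UU_{k,p}$ alone is Grosshans as you propose; either reduction is legitimate, but in both cases the stratification-by-degeneracy argument you gesture at for the blocks $\Phi_{i_1}\cdots\Phi_{i_l}$ is precisely the open problem. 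As written, your proposal is a correct plan for the $p=1$ case minus its proof, and an unproved (indeed, in the paper unproven) claim for $p>1$.
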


Theorem \ref{fingenerated} is an immediate consequence of this theorem, since
the action of $\GG_{k,p}$ on  $(J_{k,p})_x$ extends to an action of the general linear group
$ \GL(\mathrm{sym}^{\le k}p)$.
Moreover we will find a geometric description of 
$$ \SL(\mathrm{sym}^{\le k}p)/\!/\GG_{k,p}' \cong
\mathrm{Spec}(\calo( \SL(\mathrm{sym}^{\le k}p))^{\GG_{k,p}'})
$$
and thus a geometric description of
$$ (J_{k,p})_x/\!/\GG_{k,p}' \cong
((J_{k,p})_x \times 
\SL(\mathrm{sym}^{\le k}p)/\!/\GG_{k,p}') /\!/ \SL(\mathrm{sym}^{\le k}p).
$$

\section{A description via test curves}\label{quotientgrass}

In \cite{bsz}  
the action of $\GG_k$ on jet bundles is studied using an idea coming from global singularity
theory. The construction goes as follows.

If $u,v$ are positive integers, let $J_k(u,v)$ denote the vector
space of $k$-jets of holomorphic maps $(\CC^u,0) \to (\CC^v,0)$ at
the origin; that is, the set of equivalence classes of maps
$f:(\CC^u,0) \to (\CC^v,0)$, where $f\sim g$ if and only if
$f^{(j)}(0)=g^{(j)}(0)$ for all $j=1,\ldots ,k$.

With this notation, the fibres of $J_k$ are isomorphic to
$J_k(1,n)$, and the group $\GG_k$ is simply $\jetk 11$ with the
composition action on itself.

If we fix local coordinates $z_1,\ldots, z_u$ at $0\in \CC^u$ we can
again identify the $k$-jet of $f$, using derivatives at the
origin, with $(f'(0),f''(0)/2!,\ldots, f^{(k)}(0)/k!)$, where
$f^{(j)}(0)\in \mathrm{Hom}(\mathrm{Sym}^j\CC^u,\CC^v)$. This way we
get an identification
\[J_k(u,v)=\oplus_{j=1}^k\mathrm{Hom}(\mathrm{Sym}^j\CC^u,\CC^v).\]
We can compose map-jets via substitution and elimination of terms
of degree greater than $k$; this leads to the composition maps
\begin{equation}
  \label{comp}
\jetk vw \times \jetk uv \to \jetk uw,\;\;  (\Psi_2,\Psi_1)\mapsto
\Psi_2\circ\Psi_1 \mbox{modulo terms of degree $>k$ }.
\end{equation}
When $k=1$, $J_1(u,v)$ may be identified with $u$-by-$v$ matrices,
and \eqref{comp} reduces to multiplication of matrices.

The $k$-jet of a curve $(\CC,0) \to (\CC^n,0)$ is simply an element of
$J_k(1,n)$. We call such a curve $\varphi$ {\em regular} if
$\varphi'(0)\neq 0$. Let us introduce the notation $\jetreg 1n$ for the set
of regular curves:
\[\jetreg 1n=\left\{\g \in \jetk 1n; \g'(0)\neq 0 \right\}.\]
Note that if $n>1$ then the complement of $\jetreg 1n$ in $\jetk 1n$ has codimension at least two.
Let $N \ge n$ be any integer and define
\[\U_k=\left\{\Psi\in J_k(n,N):\exists \g \in \jetreg 1n: \Psi \circ \g=0
\right\}\]
to be  the set of those $k$-jets which take at
least one regular curve to zero. By definition, $\U_k$ is the image
of the closed subvariety of $\jetk nN \times \jetreg 1n$ defined by
the algebraic equations $\Psi \circ \g=0$, under the projection to
the first factor. If $\Psi \circ \gamma=0$, we call $\g$ a {\em test
curve} of $\Psi$. 

This term originally comes from global singularity
theory, where this is called the test curve model of $A_k$-singularities. In global singularity theory singularities of polynomial maps $f:(\CC^n,0) \to (\CC^m,0)$ are classified by their local algebras, and  
\[\Sigma_k=\{f \in \jetk nm:\CC[x_1,\ldots, x_n]/\langle f_1,\ldots, f_m \rangle \simeq \CC[t]/t^{k+1}\}\]
is called a Morin singularity, or $A_k$-singularity. The test curve model of Gaffney \cite{gaffney} tells us that
\[\overline{\Sigma_k}=\overline{\U_k}\]
in $\jetk nm$. 

A basic but crucial observation is the following. If $\g$ is a test
curve of $\Psi \in \U_k$, and $\vp \in \jetreg 11=G_k$ is a
holomorphic reparametrization of $\CC$, then $\g \circ \vp$ is,
again, a test curve of $\Psi$:
\begin{diagram}[LaTeXeqno,labelstyle=\textstyle]
\label{basicidea}
  \CC & \rTo^\vp & \CC & \rTo^\g & \CC^n & \rTo^{\Psi} & \CC^N
\end{diagram}
\[\Psi \circ \g=0\ \ \Rightarrow \ \ \ \Psi \circ (\g \circ \vp)=0.\]
In fact, we get all test curves of $\Psi$ in this way from a single
$\gamma$ if the
following open dense property holds: the linear part of $\Psi$ has
$1$-dimensional kernel. Before stating this more precisely in Proposition
\ref{modelprops} below, let us write down the equation $\Psi \circ
\g=0$ in coordinates in an illustrative case. Let
$\g=(\g',\g'',\ldots, \g^{(k)})\in \jetreg 1n$ and
$\Psi=(\Psi',\Psi'',\ldots, \Psi^{(k)})\in \jetk nN$ be the
$k$-jets. Using the chain rule, the equation $\Psi \circ \g=0$ reads
as follows for $k=4$:
\begin{eqnarray} \label{eqn4}
& \Psi'(\g')=0, \\ \nonumber & \frac{1}{2!}\Psi'(\g'')+\Psi''(\g',\g')=0, \\
\nonumber
& \frac{1}{3!}\Psi'(\g''')+\frac{2}{2!}\Psi''(\g',\g'')+\Psi'''(\g',\g',\g')=0, \\
&
\frac{1}{4!}\Psi'(\g'''')+\frac{2}{3!}\Psi''(\g',\g''')+\frac{1}{2!2!}\Psi''(\g'',\g'')+
\frac{3}{2!}\Psi'''(\g',\g',\g'')+\Psi''''(\g',\g',\g',\g')=0.
\nonumber
\end{eqnarray}

\begin{defn} \label{4.1minus}
To simplify our formulas we introduce the following notation for a
partition $\tau=[i_1\ldots i_l]$ of the integer $i_1+\ldots +i_l$:
\begin{itemize}
\item
 the
\textsl{length}: $|\tau|=l$,
\item  the \textsl{sum}: $\sum\tau=i_1+\ldots +i_l$,
\item the \textsl{number of permutations}: $\comb(\tau)$ is
  the number of different sequences consisting of the numbers
  $i_1,\dots, i_l$ (e.g. $\comb([1,1,1,3])=4$),
\item $\bg_\tau = \prod_{j=1}^l \g^{(i_j)}\in\Sym^l\CC^n\;\text{ and }\;
\Psi(\bg_\tau)=\Psi^l(\g^{(i_1)},\dots,\g^{(i_l)})\in\CC^N$.
\end{itemize}
\end{defn}

\begin{lemma}\label{explgp} Let
$\g=(\g',\g'',\ldots, \g^{(k)})\in \jetreg 1n$ and
$\Psi=(\Psi',\Psi'',\ldots, \Psi^{(k)})\in \jetk nN$ be $k$-jets.
Then the equation $\Psi\circ \g = 0$ is equivalent to
  the following system of $k$ linear equations with values in
  $\CC^N$:
\begin{equation}
  \label{modeleq}
\sum_{\tau\in\Pi[m]} \frac{\comb(\tau)}{\prod_{i\in \tau}i!}
\,\Psi(\bg_\tau)=0,\quad m=1,2,\dots, k,
\end{equation}
where $\Pi[m]$ denotes the set of all partitions of $m$.
\end{lemma}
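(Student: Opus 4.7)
The plan is to compute the Taylor expansion of $\Psi \circ \g$ at $t = 0$ and read off its coefficients: since by definition the equation $\Psi \circ \g = 0$ in $\jetk{1}{N}$ means $\Psi(\g(t)) \equiv 0 \pmod{t^{k+1}}$, it is equivalent to the vanishing of the coefficient of $t^m$ for each $m = 1, \ldots, k$. Adopting the conventions of Section~\ref{jetdifferentials}, I would write
\[
\g(t) = \sum_{i=1}^k \frac{t^i}{i!}\,\g^{(i)} + O(t^{k+1}),
\qquad
\Psi(x) = \sum_{l=1}^k \Psi^{(l)}(x,\ldots,x) + O(|x|^{k+1}),
\]
where $\g^{(i)}$ is the $i$-th derivative of $\g$ at $0$ and $\Psi^{(l)} \in \Hom(\Sym^l\CC^n,\CC^N)$ is the symmetric $l$-linear form representing the degree-$l$ homogeneous Taylor coefficient of $\Psi$.

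The main step is to substitute the first expansion into the second and expand every slot of each $\Psi^{(l)}$ by multilinearity, giving
\[
\Psi(\g(t)) = \sum_{l=1}^k \sum_{i_1,\ldots,i_l\geq 1}
\frac{t^{i_1+\cdots+i_l}}{i_1!\,\cdots\,i_l!}\,
\Psi^{(l)}\!\bigl(\g^{(i_1)},\ldots,\g^{(i_l)}\bigr) + O(t^{k+1}).
\]
The coefficient of $t^m$ is then a sum over ordered tuples $(i_1,\ldots,i_l)$ of positive integers summing to $m$ (with $l$ varying). Using the symmetry of each $\Psi^{(l)}$, I would group those tuples by the unordered partition $\tau \in \Pi[m]$ they represent: each $\tau$ of length $l$ contributes exactly $\comb(\tau)$ identical terms, and every contributing tuple yields the common value $\Psi(\bg_\tau)/\prod_{i\in\tau}i!$ in the notation of Definition~\ref{4.1minus}. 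This collects the coefficient of $t^m$ into precisely the $m$-th equation of (\ref{modeleq}).

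The argument is essentially Fa\`a di Bruno's formula, so no step presents a substantive obstacle; the only trap is the slightly asymmetric convention (actual derivatives for $\g$, Taylor coefficients for $\Psi$), which is what makes the factors $1/\prod_{i\in\tau}i!$ appear in (\ref{modeleq}) without a compensating $1/l!$. The $k = 4$ case displayed in (\ref{eqn4}) provides an immediate sanity check term-by-term (for instance the partitions $[4],[3,1],[2,2],[2,1,1],[1,1,1,1]$ of $4$ produce the coefficients $\tfrac{1}{24},\tfrac{2}{6},\tfrac{1}{4},\tfrac{3}{2},1$ in exact agreement with the last equation of (\ref{eqn4})).
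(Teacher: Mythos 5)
Your proof is correct and follows exactly the route the paper intends: the paper states Lemma \ref{explgp} without a formal proof, offering only the chain-rule computation displayed for $k=4$ in (\ref{eqn4}), and your argument is precisely the general version of that computation (substitute the Taylor expansion of $\g$ into the multilinear Taylor components of $\Psi$, collect ordered tuples into partitions with multiplicity $\comb(\tau)$). Your coefficient check against the $m=4$ equation is accurate, and you correctly identify the asymmetric normalisation convention that produces the factors $1/\prod_{i\in\tau}i!$.
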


For a given $\g \in \jetreg 1n$ let $\cals_\g$ denote the set of
solutions of \eqref{modeleq}; that is,
\[\cals_\g=\left\{\Psi \in \jetk nN;\Psi \circ \g=0 \right\}.\]
The equations \eqref{modeleq} are linear in $\Psi$, hence
\[\cals_\g \subset \jetk nN\]
is a linear subspace of codimension $kN$. Moreover, the following
holds:

\begin{prop}(\cite{bsz}, Proposition 4.4)
\label{modelprops}
\begin{enumerate}
\item[(i)] For $\g\in \jetreg 1n$, the set of
  solutions $\cals_\g \subset \jetk nN$ is a linear
  subspace of codimension $kN$.
\item[(ii)] Set
\[\jetko nN=\left\{ \Psi \in \jetk nN|\dim\ker(\Psi')=1\right\}.\]
For any $\g \in \jetreg 1n$, the subset $\cals_\g \cap \jetko nN$ of
$\cals_\g$ is dense.
\item[(iii)] If $\Psi \in \jetko nN$, then $\Psi$ belongs
  to at most one of the spaces $\cals_\g$. More precisely,
\[
\text{if }\g_1,\g_2\in\jetreg 1n,\;\ \Psi\in \jetko nN \text{ and }
  \Psi\circ \g_1=\Psi\circ \g_2=0,
\]
then there exists
  $\vp \in \jetreg 11$ such that $\g_1=\g_2 \circ \vp$.
\item[(iv)] Given $\g_1,\g_2\in\jetreg 1n$, we have
  $\cals_{\g_1}=\cals_{\g_2}$ if and only if there is some
  $\vp \in \jetreg 11$ such that $\g_1=\g_2 \circ \vp$.
\end{enumerate}
  \end{prop}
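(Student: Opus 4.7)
The plan is to treat the four parts in order, with the real work concentrated in (i) and (iii).

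For (i), linearity of $\cals_\g$ in $\Psi$ is immediate from \eqref{modeleq}. To compute the codimension I would exhibit a triangular structure in the variables $\Psi^{(1)},\ldots,\Psi^{(k)}$: for each $m\leq k$, the $m$-th equation is the only one involving $\Psi^{(m)}$, and the term from the partition $\tau=[m]$ contributes a nonzero coefficient on $\Psi^{(m)}\bigl((\g'(0))^{\otimes m}\bigr)$. Choosing coordinates on $\CC^n$ so that $\g'(0)=e_1$ (possible since $\g$ is regular), the system becomes lower triangular, each $\CC^N$-valued equation imposing a nontrivial constraint on a new $\CC^N$-worth of coordinates of $\Psi^{(m)}$; so the system has rank exactly $kN$, giving the claimed codimension.

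For (ii), the first of equations \eqref{modeleq} forces $\g'(0)\in\ker\Psi'$ for every $\Psi\in\cals_\g$, so the condition $\dim\ker\Psi'=1$ is Zariski-open on $\cals_\g$. Nonemptiness is obtained by picking $\Psi'$ of rank $n-1$ with $\ker\Psi'=\CC\g'(0)$ and then solving \eqref{modeleq} recursively for $\Psi^{(2)},\ldots,\Psi^{(k)}$ using the triangular structure from (i). For (iii), my strategy is to reconstruct the reparametrization $\ph\in\jetreg 11$ jet by jet. Given $\Psi\in\jetko nN$ with $\Psi\circ\g_1=\Psi\circ\g_2=0$, the $m=1$ equation yields $\g_1'(0)=a_1\g_2'(0)$ for a unique $a_1\in\CC^*$, which I set as $\ph'(0)$. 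Inductively, assume $\ph'(0),\ldots,\ph^{(m-1)}(0)$ have been fixed so that $\g_1^{(j)}(0)=(\g_2\circ\ph)^{(j)}(0)$ for $j<m$. The $m$-th equation \eqref{modeleq} for $\g_1$, subtracted from the corresponding equation for $\g_2\circ\ph$ (which also satisfies $\Psi\circ(\g_2\circ\ph)=0$), has all lower-order terms cancel by the inductive hypothesis, leaving
\[
\frac{1}{m!}\,\Psi'\bigl(\g_1^{(m)}(0)-(\g_2\circ\ph)^{(m)}(0)\bigr)=0.
\]
Since $\ker\Psi'=\CC\g_2'(0)$, the difference is a scalar multiple of $\g_2'(0)$; Fa\`a di Bruno then shows this multiple equals $\ph^{(m)}(0)\g_2'(0)$ for a uniquely determined $\ph^{(m)}(0)\in\CC$, completing the induction. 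Part (iv) then follows: the ``if'' direction is automatic, since $\Psi\circ(\g_2\circ\ph)=(\Psi\circ\g_2)\circ\ph=0$ whenever $\Psi\circ\g_2=0$; the ``only if'' direction follows by applying (iii) to any $\Psi\in\cals_{\g_1}\cap\jetko nN=\cals_{\g_2}\cap\jetko nN$, nonempty by (ii).

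The main obstacle is the combinatorial bookkeeping in the inductive step of (iii): one must verify that the Fa\`a di Bruno contributions to $(\g_2\circ\ph)^{(m)}$ assemble into exactly the weighted sum over $\Pi[m]$ appearing in \eqref{modeleq}, so that the scalar ambiguity left by $\ker\Psi'$ at each step is genuinely interpretable as the next jet of $\ph$. This coincidence is built in: \eqref{modeleq} is Fa\`a di Bruno's formula for $\Psi\circ\g$ with $\Psi$ playing the role of the outer function, so asking that $\g_1$ and $\g_2\circ\ph$ satisfy the same system with the same outer $\Psi$ amounts to asking that their $k$-jets agree modulo $\ker\Psi'$, leaving exactly the one complex degree of freedom per step needed to identify $\ph^{(m)}(0)$.
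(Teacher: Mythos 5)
The paper does not actually prove this proposition: it is quoted verbatim from \cite{bsz} (Proposition 4.4), so there is no in-paper argument to compare against. Your proof is correct and is the natural (and presumably the same) argument: triangularity of the system \eqref{modeleq} in $\Psi',\ldots,\Psi^{(k)}$ for (i) and (ii), and a jet-by-jet reconstruction of $\vp$ via Fa\`a di Bruno and the one-dimensionality of $\ker\Psi'$ for (iii), with (iv) formally reduced to (ii) and (iii). One small slip in (i): in the paper's notation the term $\Psi^{(m)}\bigl((\g'(0))^{\otimes m}\bigr)$ comes from the length-$m$ partition $[1,1,\ldots,1]\in\Pi[m]$ (with coefficient $\comb(\tau)/\prod_{i\in\tau}i!=1$), not from $\tau=[m]$, which instead contributes $\frac{1}{m!}\Psi'(\g^{(m)})$; this does not affect the argument, since the only fact used is that the $m$-th equation is the unique one involving $\Psi^{(m)}$ and does so through the coordinate $\Psi^{(m)}(e_1^{m})$ with nonzero coefficient. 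In the inductive step of (iii) you should make explicit that the subtraction is performed against $\g_2\circ\vp_{m-1}$, where $\vp_{m-1}$ is the already-constructed $(m-1)$-jet extended arbitrarily to a regular $k$-jet (so that $\Psi\circ(\g_2\circ\vp_{m-1})=0$ is legitimately available), and that adjoining $\vp^{(m)}(0)$ changes $(\g_2\circ\vp)^{(m)}(0)$ only by a multiple of $\g_2'(0)$; with that said, the argument closes as you describe.
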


By the second part of Proposition \ref{modelprops} we have a
well-defined map 
$$\n: \jetreg 1n \to \grass(\mathrm{codim}=kN,\jetk
nN), \,\,\,\, \g \mapsto \cals_\g$$ to the Grassmannian of codimension-$kN$
subspaces in $\jetk nN$. From the last part of Proposition
\ref{modelprops} it follows that:

\begin{prop}\label{propgrass}(\cite{bsz})
$\n$ is $\GG_k$-invariant on the $\jetreg 11$-orbits, and the induced map on
the orbits 
\begin{equation}\label{embeddinggrass}
\bar{\n}:\jetreg 1n/\GG_k \hookrightarrow
\grass(\mathrm{codim}=kN,\jetk nN)
\end{equation}
is injective.
\end{prop}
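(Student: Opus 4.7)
The plan is to read off both parts of the proposition directly from what has already been set up, in particular from Proposition \ref{modelprops}(iv). There is no serious calculation to do; the statement is essentially a repackaging of Proposition \ref{modelprops} in Grassmannian language.

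For the $\GG_k$-invariance on $\GG_k$-orbits, I would argue as follows. Given $\g \in \jetreg{1}{n}$ and $\vp \in \jetreg{1}{1} = \GG_k$, the chain rule $(\g\circ\vp)'(0) = \g'(0)\vp'(0)$ shows that $\g\circ\vp$ is again regular, so $\g\circ\vp\in\jetreg{1}{n}$. Associativity of $k$-jet composition (modulo $t^{k+1}$) gives, for any $\Psi\in\jetk{n}{N}$,
$$\Psi\circ(\g\circ\vp) \;=\; (\Psi\circ\g)\circ\vp \pmod{t^{k+1}}.$$
Because $\vp$ is invertible in the group $\GG_k$, the right hand side vanishes if and only if $\Psi\circ\g = 0$. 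Hence $\cals_{\g\circ\vp} = \cals_\g$, so $\n(\g\circ\vp)=\n(\g)$ and $\n$ descends to a well-defined map $\bar\n$ on $\jetreg{1}{n}/\GG_k$.

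For injectivity of $\bar\n$, suppose $\g_1,\g_2\in\jetreg{1}{n}$ satisfy $\cals_{\g_1}=\cals_{\g_2}$. This is precisely the hypothesis of Proposition \ref{modelprops}(iv), which produces a $\vp\in\jetreg{1}{1}$ with $\g_1=\g_2\circ\vp$; that is, $\g_1$ and $\g_2$ lie in the same $\GG_k$-orbit, and so define the same point of $\jetreg{1}{n}/\GG_k$. This gives injectivity of $\bar\n$.

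The only step that is not purely formal is the appeal to Proposition \ref{modelprops}(iv), whose content is that the linear system \eqref{modeleq} remembers $\g$ up to reparametrization; but that result is taken as given here. The one point to be slightly careful about is the formal manipulation of $k$-jets under composition and the fact that $\GG_k = \jetreg{1}{1}$ is genuinely a group under composition modulo $t^{k+1}$, which is built into the identification used throughout $\S 2$. So the main anticipated obstacle is purely bookkeeping, not mathematical, and amounts to checking that the operations of composing $k$-jets and restricting to the regular locus behave compatibly — and both are automatic from the definitions.
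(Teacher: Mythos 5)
Your proposal is correct and matches the paper's route: the paper derives this proposition directly from Proposition \ref{modelprops}, whose part (iv) is exactly the "if and only if" statement that $\cals_{\g_1}=\cals_{\g_2}$ precisely when $\g_1=\g_2\circ\vp$ for some $\vp\in\jetreg 11$, giving both the invariance and the injectivity. Your only deviation is to reprove the easy (invariance) direction from associativity of jet composition rather than quoting (iv) for it, which is harmless.
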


\section{Embedding into the flag of equations}\label{quotientflag}

In this section we will recast the embedding (\ref{embeddinggrass}) of
$\jetreg 1n/\GG_k$ given by Proposition \ref{propgrass} into a more useful form,
still following \cite{bsz}.
Let us rewrite the linear system $\Psi\circ \g=0$ associated to $\g
\in\jetreg 1n$ in a dual form. The system is based on the standard
composition map \eqref{comp}:
\[ \jetk nN\times\jetk 1n \longrightarrow\jetk 1N,
\]
which, via the identification $\jetk nN=\jetk n1\otimes\CC^N$, is
derived from the map
\begin{equation*}
 \jetk n1\times\jetk 1n \longrightarrow\jetk 11
\end{equation*}
via tensoring with $\CC^N$. Observing that composition is linear in
its first argument, and passing to linear duals, we may  rewrite
this correspondence in the form
\begin{equation}
  \label{mapson}
 \phi:\jetk 1n\longrightarrow \Hom(\jetk 11{}^*,\jetk n1{}^*).
\end{equation}

If $\g=(\g',\g'',\ldots ,\g^{(k)})\in \jetk 1n=(\CC^n)^k$ is the
$k$-jet of a curve, we can put $\g^{(j)}\in \CC^n$ into the $j$th
column of an $n\times k$ matrix, and
\begin{itemize}
\item identify  $\jetk 1n\text{ with }\Hom(\CC^k,\CC^n)$;
\item identify  $\jetk n1{}^* \text{ with }
\symdot=\oplus_{l=1}^k \Sym^l \CC^n$;
\item identify  $\jetk
11{}^* \text{ with } \CC^k$.
\end{itemize}

Using these identifications, we can recast the map $\phi$ in
\eqref{mapson} as
\begin{equation}
  \label{homs}
\phi_k:\Hom(\CC^k,\CC^n) \longrightarrow \Hom(\CC^k,\symdot),
\end{equation}
which may be written out explicitly as follows
\[   (\g', \g'', \ldots ,\g^{(k)})\longmapsto
\left(\g',\g''+(\g')^2, \dots, \sum_{i_1+i_2+\ldots
+i_s=d}\frac{1}{i_1!\ldots i_s!}\g^{(i_1)}\g^{(i_2)}\ldots
\g^{(i_s)}\right).
\]
The set of solutions $\mathcal{S}_\g$ is the linear subspace orthogonal to
the image of $\phi_k(\g', \ldots \g^{(k)})$ tensored by $\CC^N$; that
is,
\[\mathcal{S}_{\g}=\mathrm{im}(\phi_k(\g))^\perp \otimes \CC^N \subset \jetk nN.\]
Consequently, it is straightforward to take $N=1$ and define 
\begin{equation}\label{defsgamma}
\mathcal{S}_{\g}=\mathrm{im}(\phi_k(\g)) \in \grass(k, \symdot).
\end{equation} 

Moreover, let $B_k\subset GL(k)$ denote the Borel subgroup consisting of upper triangular matrices and let
\[  \flk=\Hom(\CC^k,\symdot)/B_k=\left\{
0=F_0\subset F_1\subset\dots\subset F_k\subset \CC^n,\; \dim
F_l=l\right\}
\]
denote the full flag of $k$-dimensional subspaces of $\symdot$.
In addition to \eqref{defsgamma} we can analogously define
\begin{equation}
\Fl_\g=(\imm(\phi(\g^1))\subset \imm(\phi(\g^2))\subset \ldots \subset \imm(\phi(\g^k)))\in \flag_k(\symdot) . 
\end{equation}

Using these definitions Proposition \ref{modelprops} implies the
the following version of Proposition \ref{propgrass}, which does not
contain the parameter $N$.

\begin{prop}\label{embedfinal} The map $\phi$ in \eqref{homs} is a
$\GG_k$-invariant algebraic morphism
\[ \phi: \jetreg 1n \rightarrow \Hom(\CC^k,\symdot),
\]
which induces 
\begin{itemize}
\item an injective map on the $\GG_k$-orbits to the
Grassmannian:
\[\phi^{Gr}: \jetreg 1n /\GG_k \hookrightarrow \grass(k, \symdot) \]
defined by $\phi^{Gr}(\g)=\mathcal{S}_\g$;
\item an injective map on the $\GG_k$-orbits to the
flag manifold:
\[\phi^{Flag}: \jetreg 1n /G_k \hookrightarrow \flag_k(\symdot) \]
defined by $\phi^{Flag}(\g)=\Fl_\g$.
\end{itemize}
In addition,
\[\phi^{Gr}=\phi^{Flag}\circ \pi_k\]
where $\pi_k:\flag(k,\symdot)\to \grass_k(\symdot)$ is the projection to the $k$-dimensional subspace.
\end{prop}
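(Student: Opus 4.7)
The plan is to prove the proposition in three stages. I would first establish algebraicity and $\GG_k$-equivariance of $\phi$, deduce well-definedness of the induced maps on orbits, and finally handle the two injectivity claims; the factorisation $\phi^{Gr}=\phi^{Flag}\circ\pi_k$ then falls out of the definitions. Algebraicity is immediate from the coordinate formula displayed after (homs), since every entry of the matrix representing $\phi_k(\g)$ is a polynomial in the components of $\g$. For equivariance, given $\vp\in\GG_k$ I would substitute $\vp(t)$ into the generating identity $\phi_k(\g)(t)=\sum_{j=1}^k c_j t^j$ (the coefficient $c_j$ being the $j$-th column of $\phi_k(\g)$) and observe that $\phi_k(\g\circ\vp)(t)\equiv \phi_k(\g)(\vp(t))$ modulo $t^{k+1}$. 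Reading off coefficients recasts this as $\phi_k(\g\circ\vp)=\phi_k(\g)\circ L_\vp$, where $L_\vp\in GL(k)$ is the upper-triangular matrix $G_k$ already exhibited in (\ref{matrixform}); in particular $L_\vp$ preserves the standard flag $\mathrm{span}(e_1)\subset\mathrm{span}(e_1,e_2)\subset\dots\subset \CC^k$. Consequently both $\mathcal{S}_\g=\imm(\phi_k(\g))$ and the flag $\Fl_\g$ of images of the standard flag under $\phi_k(\g)$ depend only on the $\GG_k$-orbit of $\g$, so $\phi^{Gr}$ and $\phi^{Flag}$ are well-defined on orbits. One should also check that $\phi_k(\g)$ is genuinely injective for $\g\in\jetreg 1n$, but this is clear because its $j$-th column contains the top-degree term $(\g')^j\in\Sym^j\CC^n$, non-zero by regularity and linearly independent of the earlier columns, which lie in $\oplus_{l<j}\Sym^l\CC^n$.

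For injectivity of $\phi^{Gr}$ I would appeal to Proposition \ref{modelprops}(iv). Note that the symbol $\mathcal{S}_\g$ was used in Section \ref{quotientgrass} for the codimension-$kN$ subspace of $\jetk nN$ of solutions of $\Psi\circ\g=0$; call it $\mathcal{S}_\g^{\mathrm{old}}$ here. The paragraph preceding (\ref{defsgamma}) establishes $\mathcal{S}_\g^{\mathrm{old}}=\imm(\phi_k(\g))^\perp\otimes\CC^N$, so $\mathcal{S}_\g^{\mathrm{old}}$ and $\imm(\phi_k(\g))$ determine each other. Thus if $\imm(\phi_k(\g_1))=\imm(\phi_k(\g_2))$ then $\mathcal{S}_{\g_1}^{\mathrm{old}}=\mathcal{S}_{\g_2}^{\mathrm{old}}$, and Proposition \ref{modelprops}(iv) furnishes some $\vp\in\jetreg 11=\GG_k$ with $\g_1=\g_2\circ\vp$. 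Injectivity of $\phi^{Flag}$ is then an immediate corollary: coincidence of flags $\Fl_{\g_1}=\Fl_{\g_2}$ forces coincidence of their top $k$-dimensional members, reducing to the Grassmannian case.

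Finally, $\phi^{Gr}=\phi^{Flag}\circ\pi_k$ holds by construction, since $\pi_k$ forgets all but the top subspace of a flag and the top piece of $\Fl_\g$ is precisely $\imm(\phi_k(\g))=\mathcal{S}_\g$. The only slightly delicate step in the whole argument is the clean derivation of the equivariance identity $\phi_k(\g\circ\vp)=\phi_k(\g)\circ L_\vp$, but this is essentially a repackaging of the chain-rule computation already performed in Section \ref{jetdifferentials} to obtain (\ref{matrixform}), so it can be cited rather than redone. Everything else is formal.
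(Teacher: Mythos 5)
Your proposal is correct and follows essentially the same route as the paper, which deduces the proposition from Proposition \ref{modelprops} via the identification $\mathcal{S}_\g=\mathrm{im}(\phi_k(\g))^\perp\otimes\CC^N$; you simply make explicit the details the paper leaves implicit (algebraicity, the equivariance identity $\phi_k(\g\circ\vp)=\phi_k(\g)\circ L_\vp$ with $L_\vp$ upper triangular, and the rank computation showing the columns of $\phi_k(\g)$ are independent so that $\Fl_\g$ is a genuine flag). You also correctly read the final identity as $\phi^{Gr}=\pi_k\circ\phi^{Flag}$, which is what the paper must mean given its stated direction for $\pi_k$.
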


Composing $\phi^{Gr}$ with the Pl\"{u}cker embedding
\[\grass(k, \symdot) \hookrightarrow \PP(\wedge^k \symdot)\]
we get an embedding
\begin{equation}\label{embedproj}
\phi^{\Proj}:\jetreg 1n /\GG_k \hookrightarrow \PP(\wedge^k(\symdot)).
\end{equation}
The image $$\phi^{Gr}(\jetreg 1n)/\GG_k \,\, \subset \,\, \grass(k, \symdot)$$ is a
$GL(n)$-orbit in $\grass(k,\symdot)$, and therefore a nonsingular
quasi-projective variety. Its closure is, however, a highly singular
subvariety of $\grass(k, \symdot)$, which when $k \leq n$ is a finite union of $GL(n)$ orbits.  

\begin{defn} \label{defnxy} 
Recall that we can identify $J_k(1,n)$ with $\mathrm{Hom}(\CC^k,\CC^n)$ and then
$$\jetreg 1n = \{\rho \in \mathrm{Hom}(\CC^k,\CC^n): \rho(e_1) \neq 0 \}.$$
Let
$$\jetnondeg 1n = \{\rho \in \mathrm{Hom}(\CC^k,\CC^n): \mathrm{rank} \rho
= \mathrm{max} \{k,n\} \}$$
and let 
\[X_{n,k}=\phi^{\Proj}(\jetnondeg 1n),\,\,\, Y_{n,k}=\phi^{\Proj}(\jetreg 1n),\]
\end{defn}

so that if $n \leq k$ then 
\[X_{n,k} \subset Y_{n,k} \subset \grass(n,\symdott) \subset \PP(\wedge^k(\symdott)).\]

It is clear that $\jetnondeg 1n$ is an open subset of $\jetreg 1n$. If we identify the elements of $\jetk 1n$ with $n \times k$ matrices whose columns are the derivatives of the map germs $f=(f',\ldots ,f^{(n)}):\CC \to \CC^n$, then $\jetnondeg 1n$ is the set of such matrices of maximal rank and $\jetreg 1n$ consists of the matrices with nonzero first column.

\begin{definition} \label{defn6.1}
Let $e_1,\ldots, e_n$ be the standard basis of $\CC^n$; then
\[\{e_{i_1,i_2,\ldots, i_s}=e_{i_1} \ldots e_{i_s}:1\le i_1\le \ldots \le i_s \le n, 1 \le s \le k\}\] 
is a basis of $\symdott$, and 
\[\{e_{\e_1}\wedge \ldots \wedge e_{\e_n}:\e_l \in \Pi_{\le n}\}\]
is a basis of $\PP(\wedge^n(\symdott))$, where 
$$\Pi_{\le n}= \{(i_1,i_2,\ldots, i_s):1\le i_1\le \ldots \le i_s \le n, 1 \le s \le k\}.$$ 
The corresponding
coordinates of $x \in \symdott$ will be denoted by $x_{\e_1,\e_2,\ldots ,\e_d}$.
Let $A_{n,k}\subset \PP(\wedge^k(\symdott))$ consist of the points whose projection to $\wedge^k (\CC^n)$ is nonzero. This is the subset where $x_{i_1,i_2,\ldots, i_k} \neq 0$ for some $1 \le i_1 \le \ldots \le i_k \le n$. 
\end{definition}

\begin{rem}
If $n=k$ then $A_{n,n}\subset \PP(\wedge^k(\symdott))$ is the affine chart where $x_{1,2,\ldots,n}\neq 0$. 
\end{rem}

Let us take a closer look at the space $\grass(n,\symdott)$, which
has an induced $\GL(n)$ action coming from the $\GL(n)$
action on $\symdott$. 
Since $\phi^\Proj$ is a
$\GL(n)$-equivariant embedding, we conclude that

\begin{lemma} \label{6.3}
\begin{enumerate}
\item[(i)] For $k\le n$ $X_{n,k}$ is the $\GL(n)$
orbit of
\begin{equation}\label{zdisdef}
\zdis=\phi^{\Proj}(e_1,\ldots, e_k)=[e_1\wedge
(e_2+ e_1^2) \wedge \ldots \wedge
(\sum_{i_1+\ldots i_s=k}e_{i_1}\ldots e_{i_s})]
\end{equation}
in $\PP(\wedge^k(\symdott))$. For arbitrary $g\in \GL(n)$ with
column vectors $v_1,\ldots, v_n$ the action is given by
\[g\cdot \zdis=\phi^{\Proj}(g)=\phi^{\Proj}(v_1,\ldots, v_n)=[v_1\wedge (v_2
+ v_1^2) \wedge \ldots
\wedge (\sum_{i_1+\ldots +i_s=n}v_{i_1}\ldots v_{i_s})].\]
\item[(ii)] For $k\le n$ $Y_{n,k}$ is a finite union of $\GL(n)$ orbits.
\item[(iii)] For $k>n$ the images $X_{n,k}$ and $Y_{n,k}$ are $\GL(n)$-invariant quasi-projective varieties with no dense $\GL(n)$ orbit.   
\end{enumerate}
\end{lemma}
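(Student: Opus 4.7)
The overall strategy is to exploit the $\GL(n)$-equivariance of $\phi^{\Proj}$ together with its factorisation through $\jetreg 1n/\GG_k$ established in Proposition~\ref{embedfinal}, and to classify the orbits of the combined $\GL(n)\times\GG_k$-action on $\jetreg 1n$ (or its open subset $\jetnondeg 1n$).

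For part~(i), when $k\le n$ the space $\jetnondeg 1n$ consists of injective linear maps $\CC^k\to\CC^n$, and $\GL(n)$ acts transitively on this set: any ordered linearly independent $k$-tuple in $\CC^n$ can be completed to a basis and hence carried to $(e_1,\ldots,e_k)$ by an element of $\GL(n)$. By the $\GL(n)$-equivariance of $\phi^{\Proj}$ noted just before the lemma, $X_{n,k}$ is the single $\GL(n)$-orbit of $\zdis=\phi^{\Proj}(e_1,\ldots,e_k)$, and the stated formula follows by applying $\phi^{\Proj}$ to $g\cdot(e_1,\ldots,e_k)=(v_1,\ldots,v_k)$ and using the explicit polynomial expression for $\phi_k$ given in Section~\ref{quotientflag}.

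For part~(ii), I would classify the $\GL(n)\times\GG_k$-orbits on $\jetreg 1n$ using the dimension sequence $d_j=\dim\mathrm{span}(\gamma',\ldots,\gamma^{(j)})$ attached to $\gamma\in\jetreg 1n$. This is $\GL(n)$-invariant by construction and $\GG_k$-invariant because the reparametrisation matrix $G_k$ of~\eqref{matrixform} is upper triangular with diagonal entries $\alpha_1^j\ne 0$. Since $d_1=1$, $d_{j+1}-d_j\in\{0,1\}$, and $d_k\le k\le n$, there are at most $2^{k-1}$ such sequences. For each sequence I would exhibit a canonical representative: use $\GL(n)$ to position each ``new-direction'' column at a standard basis vector, then use the free parameters $\alpha_2,\ldots,\alpha_k$ of $\GG_k$, solved recursively, to annihilate each column that is linearly dependent on its predecessors. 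Because $\phi^{\Proj}$ is $\GL(n)$-equivariant and factors through $\GG_k$, this descends to a finite decomposition of $Y_{n,k}$ into $\GL(n)$-orbits.

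For part~(iii), the $\GG_k$-action on $\jetreg 1n$ is free: the stabiliser equations force $\alpha_1=1$ from the first column, then $\alpha_2=0$ from the second, and so on inductively, because $\gamma'\ne 0$ and the diagonal entries of $G_k$ are $\alpha_1^j$. Hence $\jetreg 1n/\GG_k$ is a geometric quotient of dimension $k(n-1)$, and its image $Y_{n,k}$ under the injective $\GL(n)$-equivariant morphism $\phi^{\Proj}$ is a $\GL(n)$-invariant quasi-projective variety; the argument for $X_{n,k}$ is the same. To rule out a dense $\GL(n)$-orbit, compare $\dim Y_{n,k}=k(n-1)$ with $\dim\GL(n)=n^2$: a dense orbit would force $k(n-1)\le n^2$, which fails once $k>n^2/(n-1)$. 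The borderline range $n<k\le n^2/(n-1)$ must be handled by exhibiting non-constant $\GL(n)$-invariant rational functions on $Y_{n,k}$, obtained after normalising the leading columns with $\GL(n)\times\GG_k$ and reading off residual cross-ratio-type scalars from the remaining columns.

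The \textbf{main obstacle} is the orbit-uniqueness step in part~(ii): proving that each dimension sequence corresponds to exactly one $\GL(n)\times\GG_k$-orbit rather than to a continuous family. The difficulty is that the parameters $\alpha_2,\ldots,\alpha_k$ enter the later columns through polynomial expressions that couple them, so the successive ``kill a dependent column'' equations form an overlapping triangular system whose solvability must be verified at each step without destroying the normalisations already achieved. The borderline case of~(iii) is expected to be technical for a similar reason: the continuous invariants are built from the very same coupled polynomial expressions that obstruct uniqueness in~(ii).
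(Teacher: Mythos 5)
The paper gives essentially no proof of this lemma: it is asserted as an immediate consequence of the $\GL(n)$-equivariance of $\phiproj$, with the finiteness claim in (ii) imported from \cite{bsz}. Your part (i) is correct and is certainly the intended argument. The difficulties are in (ii) and (iii), and in both cases the issue you yourself flag as the ``main obstacle'' is not a technical loose end but a point where the proposed method actually fails.

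In (ii) the dimension sequence $d_j=\dim\,\mathrm{span}(\gamma',\dots,\gamma^{(j)})$ is not a complete invariant. Take $k=4\le n$ and compare $P=(e_1,0,e_2,0)$ with $Q=(e_1,0,e_2,e_2)$; both have sequence $(1,1,2,2)$ and both lie in $J_4^{\mathrm{reg}}(1,n)$. Using the matrix \eqref{matrixform}, the second column of $PG_\varphi$ is $\a_2e_1$ and the fourth is $\a_4e_1+3\a_1^2\a_2e_2$; any $g\in\GL(n)$ with $g\,PG_\varphi=Q$ forces $\a_2=0$ from the second column, after which the fourth column of $g\,PG_\varphi$ is a multiple of $e_1$ and can never equal $e_2$. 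So this single stratum contains two distinct orbits: in the normal form $(e_1,ce_1,e_2,ae_1+be_2)$ the quantity $b-3c$ rescales by $\a_1$ under the group, and its vanishing is an extra discrete invariant your scheme misses. This is exactly where the ``recursive annihilation'' breaks down: $\a_2$ is spent killing column $2$ but reappears with a different coefficient in column $4$, and the resulting triangular system is inconsistent in general. Finiteness of the orbit decomposition is still true, but the argument (in \cite{bsz}, and implicitly in \S 6 of this paper) goes through classifying the limit points $\zdis_\l$ of one-parameter subgroups, not through the dimension sequence.

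In (iii) the dimension count only disposes of the range $k(n-1)>n^2$, i.e.\ $k\ge n+2$ for $n\ge 3$ and $k\ge 5$ for $n=2$. In the remaining range the continuous invariants you propose to construct do not exist. For $(n,k)=(2,3)$ the $\GL(2)\times\GG_3$-orbit of $(e_1,e_2,0)$ is the whole of $\{\g\in J_3^{\mathrm{reg}}(1,2):\g',\g''\ \mathrm{independent}\}$ (given $\g'''=A\g'+B\g''$, solve with $\a_1=1$, $\a_2=B/2$, $\a_3=A+B^2/2$ and a suitable $g$), so $Y_{2,3}$, which is irreducible of dimension $6-3=3$, contains a $\GL(2)$-orbit of dimension $4-1=3$, hence a dense orbit; the analogous generic-stabiliser computation gives the same conclusion for $k=n+1$ in general. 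So your plan for the borderline case cannot be completed, and part (iii) as stated should itself be re-examined in that range; since the lemma is only used in the sequel with $k=n$, nothing downstream of it is affected.
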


\begin{lemma}\label{unionoforbits} If $k\le n$ then  
\begin{enumerate}
\item[(i)] $A_{n,k}$ is invariant under the $\GL(n)$ action on $\PP(\wedge^k(\symdott))$. 
\item[(ii)] $X_{n,k} \subset A_{n,k}$; however, $Y_{n,k} \nsubseteq A_{n,k}$.
\end{enumerate}
\end{lemma}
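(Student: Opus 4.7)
The plan is to recognise $A_{n,k}$ as the preimage, under a natural $\GL(n)$-equivariant linear projection, of the complement of the origin, so that part (i) is automatic; then to verify both inclusions in (ii) by direct inspection of the formula \eqref{zdisdef} defining $\phi^{\Proj}$.

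For part (i), note that the decomposition $\symdott = \bigoplus_{l=1}^k \sym^l \CC^n$ is a decomposition of $\GL(n)$-representations, so the projection $\pi : \symdott \to \sym^1 \CC^n = \CC^n$ onto the first summand is $\GL(n)$-equivariant. Taking $k$-th exterior powers yields a $\GL(n)$-equivariant linear map
\[
\wedge^k \pi : \wedge^k(\symdott) \longrightarrow \wedge^k(\CC^n),
\]
and in the basis of Definition \ref{defn6.1} this map simply reads off the coordinates $x_{i_1, \ldots, i_k}$ whose indices $\e_j = (i_j)$ are all singletons with $1 \le i_1 \le \ldots \le i_k \le n$. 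Hence $A_{n,k}$ is the complement in $\PP(\wedge^k(\symdott))$ of the projectivisation of $\ker(\wedge^k \pi)$, which is $\GL(n)$-invariant.

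For part (ii), I would compute $\wedge^k \pi$ on the base point $\zdis$ of \eqref{zdisdef}. In the $i$-th factor $\sum_{i_1+\ldots+i_s=i} e_{i_1}\cdots e_{i_s}$, the only summand lying in $\sym^1\CC^n = \CC^n$ is the term $s=1$, namely $e_i$ itself, since all other summands have $s\ge 2$ and so belong to $\sym^s \CC^n$ with $s\ge 2$. Therefore $\wedge^k\pi(\zdis) = e_1\wedge e_2\wedge \ldots \wedge e_k \neq 0$, so $\zdis \in A_{n,k}$, and combining Lemma \ref{6.3}(i) with part (i) already established gives $X_{n,k} = \GL(n)\cdot \zdis \subset A_{n,k}$.

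To see $Y_{n,k} \nsubseteq A_{n,k}$, I would exhibit a regular $k$-jet whose image under $\phi^{\Proj}$ lies in $\ker(\wedge^k \pi)$. A clean choice is the degenerate curve $\g = (e_1, 0, 0, \ldots, 0) \in \jetreg 1n$: its $i$-th factor under $\phi_k$ is $\frac{1}{i!}\, e_1^i \in \sym^i \CC^n$, so for $i\ge 2$ the factor projects to $0$ in $\CC^n$, and therefore $\wedge^k \pi$ annihilates the whole wedge product. This places $\phi^{\Proj}(\g)$ in $Y_{n,k}\setminus A_{n,k}$. There is no real obstacle here; the only point that needs brief care is the $\GL(n)$-equivariance of the projection $\symdott \to \CC^n$, which is built into the definition of the $\GL(n)$-action on each summand $\sym^l \CC^n$.
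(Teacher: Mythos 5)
Your proof is correct and follows essentially the same route as the paper: part (i) rests on the $\GL(n)$-equivariance of the projection $\symdott \to \CC^n$ (the paper phrases this via a lift $\tilde{z}=\tilde{z}^1\oplus\tilde{z}^2$ and the nonvanishing of $\det(\tilde{z}^1)$, which is your $\wedge^k\pi$ in different clothing), and part (ii) uses the very same witness $\phi^{\Proj}(e_1,0,\ldots,0)=e_1\wedge e_1^2\wedge\ldots\wedge e_1^k$ for the non-inclusion. The only cosmetic difference is that for $X_{n,k}\subset A_{n,k}$ the paper checks directly that $v_1\wedge\ldots\wedge v_k\neq 0$ for every nondegenerate jet, whereas you check the base point $\zdis$ and invoke Lemma \ref{6.3}(i) together with part (i); both are fine.
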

\begin{proof}
To prove the first part take a lift
\[\tilde{z}=\tilde{z}^1\oplus \tilde{z}^2 \in \Hom(\CC^n,\symdott)\] 
of $z \in \grass(n,\symdott)$, where 
\[z^1 \in \Hom(\CC^n,\CC^n) \text{  and  } z^2 \in \Hom(\CC^n,\oplus_{i=2}^n \Sym^i(\CC^n))\] 
Then $z\in A_{n,k}$ if and only if
$x_{1,2,\ldots, n}(z)=\det(\tilde{z}^1) \neq 0$, which is preserved by
the $\GL(n)$ action.
For the second part note that for $(v_1, \ldots, v_k) \in \jetnondeg 1n$ we have
$v_1 \wedge \ldots, \wedge v_k \neq 0$ so by definition $\phi^\Proj(v_1,\ldots, v_k) \in A_{n,k}$. On the other hand 
\[\phi^\Proj(e_1,0,\ldots, 0)=e_1 \wedge e_1^2 \wedge \ldots \wedge e_1^k \in Y_{n,k}\setminus A_{n,k}.\] 
\end{proof}

When $k=n$ we have

\begin{lemma}
$X_{k,k} \cong \GL(k)/\GG_k$ is embedded in the affine space $A_{k,k} \subset \PP(\wedge^k \sym^{\le k}\CC^k)$ as the $\GL(k)$ orbit of $[e_1 \wedge (e_2+e_1^2) \wedge \ldots \wedge (\sum_{i_1+\ldots +i_s=k}e_{i_1}\ldots e_{i_s})].$
\end{lemma}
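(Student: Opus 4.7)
The statement is essentially a synthesis of Lemma \ref{6.3}(i), Lemma \ref{unionoforbits}(ii), and Proposition \ref{embedfinal} in the special case $n=k$. The plan is to assemble these ingredients together with the identification of $\jetnondeg 1k$ as the group $\GL(k)$.

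First I would observe that when $n=k$, the nondegenerate jets
$$\jetnondeg 1k = \{\rho \in \Hom(\CC^k,\CC^k) : \mathrm{rank}(\rho) = k\}$$
form the open subvariety $\GL(k) \subset \Hom(\CC^k,\CC^k)$, with the identity matrix corresponding to the standard basis jet $(e_1,e_2,\ldots,e_k)$. By the description of the $\GG_k$-action recorded in (\ref{matrixform}), the reparametrization action of $\GG_k$ on $\jetk 1k = \Hom(\CC^k,\CC^k)$ is given by right multiplication by the matrix representation of $\GG_k$ as a subgroup of $\GL(k)$. This right action preserves $\jetnondeg 1k$, and the quotient is exactly $\GL(k)/\GG_k$.

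Next I would apply Lemma \ref{6.3}(i), specialized to the case $k = n$, which identifies $X_{k,k}$ with the $\GL(k)$-orbit of the distinguished point
$$\zdis = \phi^{\Proj}(e_1,\ldots,e_k) = [e_1 \wedge (e_2+e_1^2) \wedge \cdots \wedge (\textstyle\sum_{i_1+\cdots+i_s=k} e_{i_1}\cdots e_{i_s})].$$
By Proposition \ref{embedfinal}, $\phi^{\Proj}$ is $\GG_k$-invariant and descends to an injection on $\GG_k$-orbits; restricting to the open subset $\jetnondeg 1k \subset \jetreg 1k$ and combining with the previous paragraph, the induced map
$$\GL(k)/\GG_k \;=\; \jetnondeg 1k / \GG_k \;\hookrightarrow\; X_{k,k}$$
is a $\GL(k)$-equivariant bijection onto the $\GL(k)$-orbit of $\zdis$, giving the isomorphism $X_{k,k} \cong \GL(k)/\GG_k$. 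Finally, the containment $X_{k,k} \subset A_{k,k}$ is precisely Lemma \ref{unionoforbits}(ii), so the orbit lives inside the stated affine chart.

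There is no real obstacle here beyond bookkeeping: the only point that requires any care is the compatibility check that the reparametrization action of $\GG_k$ on $\jetk 1k$ agrees, under the identification $\jetk 1k \cong \Hom(\CC^k,\CC^k)$, with right multiplication by the upper triangular realization of $\GG_k$ in $\GL(k)$ described in the introduction, so that the quotient $\jetnondeg 1k / \GG_k$ really equals the homogeneous space $\GL(k)/\GG_k$; this follows directly from inspection of (\ref{matrixform}).
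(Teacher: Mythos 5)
Your proposal is correct and is essentially the argument the paper intends: the lemma is stated without proof as an immediate consequence of Lemma \ref{6.3}(i), Lemma \ref{unionoforbits}(ii) and Proposition \ref{embedfinal} specialized to $n=k$, which is exactly what you assemble. Your extra care in checking that the reparametrization action on $\jetnondeg 1k\cong\GL(k)$ is right multiplication by the matrix realization of $\GG_k$, so that the orbit space is the homogeneous space $\GL(k)/\GG_k$, is the right (and only) point needing verification.
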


\section{Affine embeddings of $\SL(k)/\UU_k$}

In the last section we embedded $\GL(k)/\GG_k$ in the affine space $A_{k,k} \subset \PP(\wsymk)$ as the $\GL(k)$ orbit of 
\[[e_1 \wedge (e_2+e_1^2) \wedge \ldots \wedge (\sum_{i_1+\ldots +i_s=k}e_{i_1}\ldots e_{i_s})] \in \PP(\wsymk).\]
Equivalently we have $$\SL(k)/\SL(k) \cap \GG_k=\SL(k)/\UU_k \rtimes F_k$$ embedded in $\wsymk$ as the $\SL(k)$ orbit of 
\[p_k=e_1 \wedge (e_2+e_1^2) \wedge \ldots \wedge (\sum_{i_1+\ldots +i_s=k}e_{i_1}\ldots e_{i_s}),\]
where $\SL(k) \cap \GG_k$ is the semi-direct product $\UU_k \rtimes F_k$ of $\UU_k$ by the finite group $F_k$ of $\ell_k$th roots of unity in $\CC$ for $\ell_k=1+\ldots +k={k+1 \choose 2}$, embedded in $\SL(k)$ as
\[ \epsilon \mapsto \left(\begin{array}{cccc}\epsilon & 0 & \ldots & 0 \\ 0 & \epsilon^2 & \ldots & 0 \\  & & \ddots & \\ 0 & 0 & \ldots & \epsilon^{k}\end{array}\right)\in \SL(k).\]
In this section we will look for affine embeddings of $\SL(k)/\UU_k$ in
spaces of the form
\[ W_{k,K} = \wsymk \otimes (\CC^k)^{\otimes K} \]
for suitable $K$ and study their closures.

\begin{lemma}\label{mainlem}
Let $K=M(1+2+\ldots +k)+1={k+1 \choose 2}M+1$ where $M \in \mathbb{N}$. Then the point 
\[ p_k \otimes e_1^{\otimes K} \in \wsymk \otimes (\CC^k)^{ \otimes K} \]
where 
\[p_k=e_1 \wedge (e_2+e_1^2) \wedge \ldots \wedge (\sum_{i_1+\ldots +i_s=k}e_{i_1}\ldots e_{i_s})\in \wsymk\]
has stabiliser $\UU_k$ in $\SL(k)$. 
\end{lemma}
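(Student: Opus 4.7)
The plan is to separate the stabiliser condition into two independent pieces using uniqueness of rank-one tensor decomposition, and then analyse each piece. Since $p_k$ and $e_1^{\otimes K}$ are both nonzero, for any $g \in \SL(k)$ with $g\cdot (p_k\otimes e_1^{\otimes K}) = p_k\otimes e_1^{\otimes K}$ the factorisation $(g\cdot p_k)\otimes (g\cdot e_1^{\otimes K})$ of the left hand side must agree with $p_k\otimes e_1^{\otimes K}$, forcing
\[ g\cdot p_k = \mu\, p_k, \qquad g\cdot e_1^{\otimes K} = \mu^{-1} e_1^{\otimes K} \]
for some $\mu \in \CC^*$. In particular $g$ stabilises the line $[p_k]\subset \PP(\wsymk)$.

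Next I identify this line stabiliser. By the $\GL(k)$-equivariant embedding $\GL(k)/\GG_k\cong X_{k,k}\hookrightarrow \PP(\wsymk)$ of the previous section, the stabiliser of $[p_k]$ in $\GL(k)$ equals $\GG_k$, so in $\SL(k)$ it equals $\SL(k)\cap \GG_k = \UU_k\rtimes F_k$. The central $\CC^*\subset \GG_k$, consisting of $\diag(\alpha_1,\alpha_1^2,\ldots,\alpha_1^k)$, scales $p_k$ by $\alpha_1^{1+2+\cdots+k}=\alpha_1^{\ell_k}$, which equals $1$ on $\SL(k)\cap \GG_k$; hence $\UU_k\rtimes F_k$ in fact fixes $p_k$ pointwise, $\mu=1$, and it remains to determine which elements of $\UU_k\rtimes F_k$ fix $e_1^{\otimes K}$.

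For this final step I read off the action on $e_1$ directly from the matrix description (\ref{matrixform}). Every element of $\GG_k$ is represented there by an upper-triangular matrix with diagonal $(\alpha_1,\alpha_1^2,\ldots,\alpha_1^k)$, so applied to $e_1$ via the (standard, left) $\GL(k)$-action on $\CC^k$ used to build $\wsymk$ it yields $\alpha_1 e_1$, and therefore acts on $e_1^{\otimes K}$ by multiplication by $\alpha_1^K$. For $g\in \UU_k$ we have $\alpha_1=1$, so $e_1^{\otimes K}$ is fixed automatically. For $g\in F_k$ the parameter $\alpha_1=\varepsilon$ is an $\ell_k$th root of unity, and the hypothesis $K=\ell_k M+1$ then gives $\varepsilon^K=\varepsilon^{\ell_k M+1}=\varepsilon$; this equals $1$ only for $\varepsilon=1$. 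Thus $F_k$ contributes only the identity and the total stabiliser is $\UU_k$.

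The only step with any real content is the rank-one tensor uniqueness argument, but this is a routine fact; the remainder reduces to inspecting the matrix (\ref{matrixform}) and the arithmetic of $\ell_k$th roots of unity, so I do not anticipate a substantial obstacle. The choice $K=\ell_k M + 1$ is seen to be the precise numerical condition needed to kill the finite group $F_k$ while $\UU_k$ is preserved for free because it already lies in the kernel of $\GG_k\to\CC^*$.
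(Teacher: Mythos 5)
Your proposal is correct and follows essentially the same route as the paper: reduce to the known fact that the stabiliser of the line $[p_k]$ in $\GL(k)$ is $\GG_k$ (so the vector stabiliser sits inside $\SL(k)\cap\GG_k=\UU_k\rtimes F_k$), note that $\UU_k$ fixes both $p_k$ and $e_1$, and then use the arithmetic of $K=\ell_k M+1$ to kill the residual finite/diagonal part. You merely make explicit the rank-one tensor uniqueness step and the $F_k$ bookkeeping that the paper leaves implicit.
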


\begin{proof}
 By Proposition 5.1 the stabiliser of 
$$[p_k] \in \mathbb{P}(\wsymk) \cong \PP(\wsymk \otimes (\CC e_1)^{\otimes K})
\subseteq \PP(W_{k,K})$$ 
in $GL(k)$ is $\GG_k = \UU_k \rtimes \CC^*$, so the stabiliser of $$p_k \otimes 
e_1^{\otimes K} \in \wsymk \otimes (\CC^k)^{ \otimes K}$$ 
is contained in $\GG_k$. Moreover by the proof of Proposition 5.1 
the stabiliser of $p_k \otimes 
e_1^{\otimes K}$ contains $\UU_k$. Finally 
\[  \left(\begin{array}{cccc}z & 0 & \ldots & 0 \\ 0 & z^2 & \ldots & 0 \\  & & \ddots & \\ 0 & 0 & \ldots & z^{k}\end{array}\right) \in \CC^* \subseteq \GG_k\]
acts on  $p_k \otimes 
e_1^{\otimes K}$ as multiplication by 
$$z^{1+2+ \cdots + k + K} = z^{(M+1)(1+2+ \cdots +k) +1}$$
and has determinant 1 if and only if $z^{1+2+ \cdots + k}=1$, so it lies
in $\SL(k)$ and fixes  $p_k \otimes 
e_1^{\otimes K}$ if and only if $z=1$. 
\end{proof}

We will prove 
\begin{theorem}\label{mainthm}
If $k \geq 4$ and  $K=M(1+2+\ldots +k)+1$ where $M \in \mathbb{N}$ is
sufficiently large, then 
the orbit of  $p_k \otimes 
e_1^{\otimes K}$ 
where 
\[p_k=e_1 \wedge (e_2+e_1^2) \wedge \ldots \wedge (\sum_{i_1+\ldots +i_s=k}e_{i_1}\ldots e_{i_s})\in \wsymk\]
under the natural action of $\SL(k)$ on  
\[ W_{k,K} = \wsymk \otimes (\CC^k)^{\otimes K} \]
is isomorphic to $\SL(k)/\UU_k$, and its complement in its closure
$\overline{\SL(k)(p_k \otimes 
e_1^{\otimes K})}$ in $W_{k,K}$ 
has  codimension at least two. 
\end{theorem}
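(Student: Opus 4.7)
The first assertion, that the orbit $\SL(k)\cdot(p_k\otimes e_1^{\otimes K})$ is isomorphic to $\SL(k)/\UU_k$, is an immediate consequence of Lemma \ref{mainlem}. For the codimension statement, set $x = p_k \otimes e_1^{\otimes K}$ and $\overline{O}=\overline{\SL(k)\cdot x}\subset W_{k,K}$. Since $\overline{O}\setminus\SL(k)\cdot x$ is closed and $\SL(k)$-invariant, each of its irreducible components is a union of $\SL(k)$-orbits, so it suffices to prove that every boundary orbit has codimension at least two in $\overline{O}$; equivalently, that the stabiliser in $\SL(k)$ of any boundary point has dimension at least $\dim\UU_k+2=k+1$.

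The strategy is to apply the standard fact that, for a reductive group acting linearly on an affine variety, every orbit in the closure of a given orbit is reached along a one-parameter subgroup. Thus for any boundary orbit $\SL(k)\cdot y$ there exist $g\in \SL(k)$ and a 1-PS $\lambda:\CC^*\to \SL(k)$ with $y=\lim_{t\to 0}\lambda(t)\cdot gx$. After conjugation I may assume $\lambda$ lies in the maximal diagonal torus $T\subset \SL(k)$, so $\lambda(t)=\diag(t^{a_1},\ldots,t^{a_k})$ with $\sum_i a_i=0$, while $g$ ranges over $\SL(k)$.

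I then decompose $gx=\sum_\chi (gx)_\chi$ into $T$-weight components: the limit exists iff $\langle\chi,\lambda\rangle\ge 0$ for every $\chi$ with $(gx)_\chi\ne 0$, and in that case $y=\sum_{\langle\chi,\lambda\rangle=0}(gx)_\chi$. In particular $y$ is $\lambda(\CC^*)$-fixed, contributing one dimension to its stabiliser, and $k$ more are required. The $T$-weights of $x$ are the shifts by $(K,0,\ldots,0)$ of the weights of $p_k$, and a weight of $p_k$ is the sum of $T$-weights of one monomial picked from each factor $\sum_{i_1+\ldots+i_s=j}e_{i_1}\cdots e_{i_s}$ of $p_k$; these range between the extremes $(1,1,\ldots,1)$ (from $e_1\wedge e_2\wedge\cdots\wedge e_k$) and $(\binom{k+1}{2},0,\ldots,0)$ (from $e_1\wedge e_1^2\wedge\cdots\wedge e_1^k$). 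With $K=M\binom{k+1}{2}+1$ and $M$ large, the tensor factor $e_1^{\otimes K}$ is dominant: existence of a non-trivial limit forces $a_1\ge 0$, and either $g$ lies in a conjugate of $\UU_k\rtimes F_k$ (returning us to the open orbit), or $g$ satisfies constraints which a Bruhat-decomposition analysis reduces to a finite list of explicit normal forms for $y$. For each normal form one checks that $y$ is fixed by $\lambda(\CC^*)$ together with an additional subgroup of dimension at least $k$ sitting inside the centraliser $Z_{\SL(k)}(\lambda)$ or the unipotent radical of $P_\lambda$, yielding $\dim\mathrm{Stab}_{\SL(k)}(y)\ge k+1$.

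The main obstacle is the uniform execution of this last step, namely the detailed enumeration of admissible limits $y$ and the verification of $k$ extra stabiliser directions for each. The hypothesis $k\ge 4$ enters precisely here: only for $k\ge 4$ does the weight diagram of $p_k$ contain enough extremal weights to furnish the required two-dimensional increase of stabiliser over the generic $\UU_k$ in every case; for $k=2,3$ the diagram is too sparse and the argument breaks down, requiring a different embedding. The choice $K=M\binom{k+1}{2}+1$ plays a dual role, as already in Lemma \ref{mainlem}: it excises the finite group $F_k$ from the stabiliser of $x$, and simultaneously ensures that 1-PSs with $a_1\ne 0$ produce only high-codimension loci and so contribute no codimension-one boundary divisors.
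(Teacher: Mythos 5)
Your opening reduction is where the argument breaks. You invoke ``the standard fact that, for a reductive group acting linearly on an affine variety, every orbit in the closure of a given orbit is reached along a one-parameter subgroup,'' i.e.\ that every boundary point $y$ of $\overline{\SL(k)\cdot x}$ has the form $\lim_{t\to 0}\lambda(t)\cdot gx$. That is not a standard fact, and it is not true in the generality you need: the Hilbert--Mumford/Kempf/Birkes theorems only guarantee that the \emph{closed} orbit in the closure is reached by a one-parameter subgroup limit; intermediate boundary orbits (including, a priori, codimension-one ones) need not contain any such limit point. The paper circumvents exactly this issue by first writing $\overline{\SL(k)(p_k\otimes e_1^{\otimes K})}=\SL(k)\,\overline{B_{k-1}(p_k\otimes e_1^{\otimes K})}$ (valid because $\SL(k)/B_k$ is complete and $\UU_k$ already stabilises the point), then analysing arbitrary \emph{sequences} $b^{(m)}\in B_{k-1}$ by hand (Lemma \ref{lempart1}) to show that any limit either lies in the orbit, has large stabiliser, or lands in $B_k\overline{T_k(\cdot)}$; only for the torus orbit closure is the one-parameter-subgroup criterion (Lemma \ref{boundarylemma}) legitimately available. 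Without some substitute for this reduction your weight-decomposition analysis does not apply to all boundary points.

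The second, equally serious, gap is that the entire substantive content of the theorem is deferred to ``a Bruhat-decomposition analysis reduces to a finite list of explicit normal forms'' and ``for each normal form one checks'' that there are $k$ extra stabiliser directions. In the paper this is the bulk of the proof: one classifies the potentially codimension-one limits as the points $\zdis_{\l^\s}$, $\zdis_{\mu^\s}$ attached to ``regular'' and ``degenerate'' one-parameter subgroups with head $\s$ (Lemma \ref{largestorbits}), computes the $k$-dimensional limit $G^\s=\lim_{t\to 0}G_{\l^\s(t)\zdis}$ of the stabilisers (Proposition \ref{lemma1}, via the rescaling $\b_i=t^{-n_i^\s}\a_i$), and then in three separate cases ($\s=k$; $\s<k$ with $k\not\equiv -1 \bmod \s$; $\s<k$ with $k\equiv -1\bmod\s$) exhibits an explicit extra unipotent transformation $T$ fixing the limit but not lying in $G^\s$, proved by showing a certain polynomial system has no solution. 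Your stated reason for the hypothesis $k\ge 4$ (``enough extremal weights in the weight diagram'') does not match where it actually enters: it is needed in the third case to ensure $k-1>\s$ when $k=c\s-1$, so that the transformation $e_{k-1}\mapsto e_{k-1}+\zeta e_\s$, $e_k\mapsto e_k+\zeta e_{\s+1}$ makes sense and is genuinely new. As it stands the proposal is a plausible outline resting on an unjustified reduction, with the hard verification omitted.
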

  
This theorem has an immediate corollary.
\begin{corollary} If $k \geq 2$ then 
$\UU_k$ is a Grosshans subgroup of $\SL(k)$, so that every linear action of $\UU_k$ which extends to a linear action of $\SL(k)$ has finitely generated invariants.
\end{corollary}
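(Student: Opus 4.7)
The plan is to deduce this corollary from Theorem~\ref{mainthm} together with the converse half of the Grosshans criterion recalled at the end of Section~3: if $G/U$ embeds as an open subvariety of an affine variety $Z$ with complement of codimension at least two, then $\calo(G)^U \cong \calo(Z)$ is finitely generated and $U$ is a Grosshans subgroup of $G$. For $k \geq 4$ this is immediate. Taking $K = M(1+2+\cdots+k)+1$ with $M$ sufficiently large and setting
\[
Z \;=\; \overline{\SL(k)\cdot(p_k \otimes e_1^{\otimes K})} \;\subset\; W_{k,K},
\]
Theorem~\ref{mainthm} (combined with Lemma~\ref{mainlem}) identifies $\SL(k)\cdot(p_k\otimes e_1^{\otimes K})$ with $\SL(k)/\UU_k$ and shows that its complement in $Z$ has codimension at least two. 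Since $Z$ is closed in the vector space $W_{k,K}$, hence affine, the general criterion yields that $\calo(\SL(k))^{\UU_k} \cong \calo(Z)$ is finitely generated.

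For $k = 2, 3$ I would argue directly. When $k=2$, the natural $\SL(2)$-action on $\CC^2$ identifies $\SL(2)/\UU_2$ with $\CC^2 \setminus \{0\}$, whose complement in the affine plane $\CC^2$ has codimension two, so $\UU_2$ is Grosshans in $\SL(2)$. For $k=3$ I would reapply the construction of Theorem~\ref{mainthm} in the borderline dimension: the only point at which the hypothesis $k \geq 4$ genuinely enters is the codimension estimate on $Z \setminus \SL(k)/\UU_k$, and because $\UU_3$ is only two-dimensional and $\SL(3)/\UU_3$ admits explicit affine embeddings in relatively low-dimensional representations, it should be possible either to verify directly that the same codimension estimate still holds, or alternatively to construct by hand an affine variety completing $\SL(3)/\UU_3$ in codimension two.

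With the Grosshans property in hand, the second half of the corollary is a formal consequence. For any linear action of $\UU_k$ on an affine variety $Y$ which extends to a linear $\SL(k)$-action,
\[
\calo(Y)^{\UU_k} \;\cong\; \bigl(\calo(Y) \otimes \calo(\SL(k))^{\UU_k}\bigr)^{\SL(k)},
\]
and the right-hand side is a ring of invariants of the reductive group $\SL(k)$ acting on the finitely generated algebra $\calo(Y) \otimes \calo(Z)$, hence finitely generated by classical GIT; the projective case follows by applying the same reasoning to homogeneous coordinate rings. The principal obstacle I expect is the $k=3$ case, where the small dimension of $\SL(3)/\UU_3$ means one cannot simply quote Theorem~\ref{mainthm} and must instead re-examine the codimension argument in that borderline case or supply an alternative affine completion.
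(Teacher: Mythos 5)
Your treatment of $k \geq 4$ is exactly the paper's: quote Theorem~\ref{mainthm}, note that $Z = \overline{\SL(k)(p_k \otimes e_1^{\otimes K})}$ is affine, and invoke the converse Grosshans criterion from \S 3; the deduction of finite generation of invariants for any extending linear action is then the standard formal consequence, also as in the paper. Your direct argument for $k=2$ via $\SL(2)/\UU_2 \cong \CC^2 \setminus \{0\} \subset \CC^2$ is correct and is in fact confirmed by the paper's own Example at the end of \S 7.

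The gap is $k=3$. The paper does not attempt to extend the argument of Theorem~\ref{mainthm} to this case; it simply records that the $k=2$ and $k=3$ cases are already known, citing \cite{rousseau}. Your proposal for $k=3$ is only a plan (``it should be possible either to verify directly \dots or alternatively to construct by hand \dots''), and the first branch of that plan is unlikely to succeed as stated: the hypothesis $k \geq 4$ is not an artefact of bookkeeping but enters essentially in Case~3 of the proof of Lemma~\ref{lempart2} (the case $\s < k$ with $k \equiv -1 \bmod \s$), where the construction of the extra unipotent stabiliser requires $k-1 > \s$; for $k=3$, $\s=2$ this fails, so the codimension estimate genuinely breaks down and ``re-examining'' it does not repair it. To close the corollary at $k=3$ you must either supply an explicit affine completion of $\SL(3)/\UU_3$ with small boundary (which you have not done), or cite the known result as the paper does. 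As written, your proof establishes the corollary only for $k=2$ and $k \geq 4$.
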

\begin{proof} This follows directly from Theorem \ref{mainthm} when $k \geq 4$.
When $k=2$ and $k=3$ it is already known (cf. \cite{rousseau}).
\end{proof}

The remainder of this section will be devoted to proving Theorem \ref{mainthm}.

\bigskip

It follows directly from Lemma \ref{mainlem} that the $\SL(k)$-orbit of  $p_k \otimes 
e_1^{\otimes K}$ 
in  
$ W_{k,K} = \wsymk \otimes (\CC^k)^{\otimes K}$
is isomorphic to $\SL(k)/\UU_k$. 

Recall that 
$$
\UU_k = \left\{ 
\left(\begin{array}{ccccc}
1 & \a_2 & \a_3 & \cdots  & \a_k \\
0        & 1 & 2\a_2 & \cdots &  2\a_{k-1}+ \ldots \\
0        & 0       & 1  & \cdots & 3\a_{k-2}+\ldots \\
\cdot    & \cdot   & \cdot    & \cdot &  \cdot \\
0 & 0 & \cdots & 1 & (k-1)\a_2 \\
0 & 0 & 0 & \cdots  & 1 
\end{array} \right): \a_2, \ldots, \a_k \in \CC \right\}
$$
so that $\UU_k$ is generated along its last column as well as along its
first row.

Let $B_k \subset \SL(k)$ denote the standard Borel subgroup of
$\SL(k)$ which stabilises the 
filtration $\CC e_1 \subset \CC e_1 \oplus \CC e_2 \subset \cdots \CC^k$.
Then $B_k=B_{k-1}\cdot \UU_k$ where the Borel subgroup $B_{k-1}$ of $\GL(k-1)=\GL(\CC e_1 \oplus \CC e_2 \oplus \ldots \oplus \CC e_{k-1})$ is embedded diagonally in
$\SL(k)$ via
$$A \mapsto \left(  \begin{array}{cc}
A & 0 \\
0 & (\mathrm{det}A)^{-1} 
\end{array}  
\right).$$
Since $\UU_k$ stabilises $p_k$ and $e_1$
we have 
\[\overline{B_k (p_k \otimes e_1^{\otimes K})}=\overline{B_{k-1} (p_k \otimes e_1^{\otimes K})},\]
and since $\SL(k)/B_k$ is projective we have
\[\overline{\SL(k) (p_k \otimes e_1^{\otimes K})}= \SL(k) \overline{B_k (p_k \otimes e_1^{\otimes K})} = \SL(k) \overline{B_{k-1} (p_k \otimes e_1^{\otimes K})}.\]
Since the closure $\overline{\SL(k) (p_k \otimes e_1^{\otimes K})}$ of the 
$\SL(k)$-orbit of $ p_k \otimes e_1^{\otimes K}$ in $W_{k,K}$ is the union of finitely many $\SL(k)$-orbits, to prove Theorem \ref{mainthm} it suffices to prove

\begin{lemma} \label{lemparts12}
Suppose that $k \geq 4$ 
and $a$ and $b$ are strictly positive integers with $b/a$ large enough
and that $x$ lies in the closure in
$$(\wsymk)^{\otimes a} \otimes (\CC^k)^{\otimes b}$$
of the orbit $B_k(p_k^{\otimes a} \otimes e_1^{\otimes b})$ of
$p_k^{\otimes a} \otimes e_1^{\otimes b}$ under the natural action of the  
Borel subgroup $B_k$ of $\SL(k)$. Then either $x \in B_k(p_k^{\otimes a} \otimes e_1^{\otimes b})$ or the stabiliser of $x$ in $\SL(k)$ has dimension at least $k+1$.
\end{lemma}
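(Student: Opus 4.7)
The plan is to exploit the decomposition $B_k = B_{k-1}\UU_k$ together with the fact that $\UU_k$ stabilises $v := p_k^{\otimes a} \otimes e_1^{\otimes b}$, giving $B_k v = B_{k-1}v$ and $\overline{B_k v} = \overline{B_{k-1}v}$, and then to analyse the boundary $\overline{B_{k-1}v}\setminus B_{k-1}v$ by a Hilbert--Mumford / one-parameter-subgroup argument. Since $B_{k-1}v$ is locally closed in its closure, every boundary point lies in a strictly smaller $B_{k-1}$-orbit, and the task reduces to showing that such orbits consist of points with $\SL(k)$-stabiliser of dimension at least $k+1$.

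Writing $B_{k-1} = T_{k-1} \cdot U_{k-1}$ with $T_{k-1}$ its maximal torus and $U_{k-1}$ its unipotent radical, the orbit $U_{k-1}v$ is closed in the ambient affine space, since unipotent orbits on affine varieties are always closed. Hence every boundary point $x$ is realised as a limit $\lim_{t\to 0}\lambda(t)(uv)$ for some 1-PS $\lambda$ of $T_{k-1}$ and some $u\in U_{k-1}$, and $x$ equals the sum of those $T_{k-1}$-weight components of $uv$ whose $\lambda$-weight is zero. The tensor factor $e_1^{\otimes b}$ contributes $b$ times the $\lambda$-weight of $e_1$ to every monomial of $uv$, while the weights contributed by the $p_k^{\otimes a}$ factor are bounded independently of $b$; when $b/a$ is sufficiently large, integrality forces the $\lambda$-weight of $e_1$ to vanish. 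This restricts $\lambda$ to a codimension-one subtorus of $T_{k-1}$ (the stabiliser of $\CC e_1$) and constrains the surviving weight component of $p_k^{\otimes a}$ to be a proper degeneration of $p_k^{\otimes a}$ in which several homogeneous summands of $p_k = \bigwedge_{j=1}^{k}\sum_{i_1+\ldots+i_s=j}e_{i_1}\cdots e_{i_s}$ have collapsed.

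For each such admissible degeneration, the third step is to exhibit a unipotent subgroup of $\SL(k)$ of dimension at least $k+1$ fixing $x$. The main obstacle is the combinatorial bookkeeping: there are many possible $\lambda$ in the codimension-one subtorus and many possible $u\in U_{k-1}$, and in each case one must verify that the surviving monomials acquire enough additional symmetries. The cleanest extreme case is the total degeneration $e_1 \wedge e_1^2 \wedge \cdots \wedge e_1^k$, which is fixed by the full unipotent radical of $B_k$ of dimension $k(k-1)/2 \geq k+1$ whenever $k\geq 4$. For intermediate degenerations, the stabiliser should contain $\UU_k$ together with extra one-parameter unipotent subgroups arising from the collapse of summands of $p_k$; the inequality $k(k-1)/2\geq k+1$, which first holds at $k=4$, leaves just enough room for the required dimension count, and the large-$b/a$ hypothesis is precisely what guarantees that enough such collapses occur to push the stabiliser dimension up to $k+1$, explaining why both hypotheses $k\geq 4$ and $b/a \gg 0$ are essential.
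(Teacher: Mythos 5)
Your overall shape -- reduce to $B_{k-1}$ via $B_k=B_{k-1}\UU_k$, pass to one-parameter-subgroup limits, then exhibit extra unipotent stabilisers for the degenerate limits -- matches the paper's strategy, but two of your reduction steps contain genuine gaps. First, the claim that every boundary point of $\overline{B_{k-1}v}$ has the form $\lim_{t\to 0}\lambda(t)(uv)$ for a one-parameter subgroup $\lambda$ of the torus and a \emph{fixed} $u\in U_{k-1}$ does not follow from the Kostant--Rosenlicht closedness of unipotent orbits. A boundary point arises from a sequence $t_m u_m v$ in which the unipotent parts $u_m$ need not converge, so $\overline{T_{k-1}U_{k-1}v}$ is not the union of the sets $\overline{T_{k-1}(uv)}$; the analogue of Lemma \ref{boundarylemma} is a torus statement and has no off-the-shelf solvable version. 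This is exactly what the paper's Lemma \ref{lempart1} is for: one takes a sequence $b^{(m)}\in B_{k-1}$ with $b^{(m)}v\to x$, shows $(b^{(m)}_{11})^b$ converges, and then argues case by case that either all entries of $b^{(m)}$ converge in $\SL(k)$ (so $x$ is in the orbit), or the limit acquires a stabiliser of dimension $\ge k+1$, or $x$ can be pushed into $B_k\overline{T_kv}$ -- only after that does the torus/1-PS analysis begin.

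Second, your use of the hypothesis $b/a\gg 0$ is not correct: the weights of a one-parameter subgroup of $T_{k-1}$ are unbounded, so the premise that ``the weights contributed by the $p_k^{\otimes a}$ factor are bounded independently of $b$'' fails, and one cannot conclude that the $\lambda$-weight of $e_1$ vanishes. Indeed the paper works throughout with $\lambda_1>0$ (see \eqref{page25}); the role of $b/a$ large is different -- it forces the top graded piece of the limit to collapse so that $p_k^\infty$ is independent of $e_k$ (equation \eqref{hey}), which supplies part of the extra stabiliser. Finally, the heart of the matter -- classifying the codimension-one boundary orbits (the points $\zdis_{\lambda^\sigma}$, $\zdis_{\mu^\sigma}$ of Lemma \ref{largestorbits}), computing the limiting $(k-1)$-dimensional unipotent stabiliser $G^\sigma=\lim_{t\to 0}G_{\lambda^\sigma(t)\zdis}$ of Proposition \ref{lemma1}, and then producing in each of three arithmetic cases an additional unipotent element $T_\zeta$ not lying in $G^\sigma$ -- is asserted rather than proved in your proposal (``the stabiliser should contain $\UU_k$ together with extra one-parameter unipotent subgroups''); note also that the limit points are not stabilised by $\UU_k$ itself but only by a degeneration of it. The inequality $k(k-1)/2\ge k+1$ is not where $k\ge 4$ enters; it is needed in the case $\sigma<k$, $k\equiv -1 \bmod \sigma$ of the extra-stabiliser construction.
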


We will split the proof of this lemma into two parts. Let $T_k$ denote
the standard maximal torus of $\SL(k)$ consisting of the diagonal matrices
in $\SL(k)$. Lemma \ref{lemparts12} follows immediately from Lemmas \ref{lempart2} and \ref{lempart1} below.

\begin{lemma} \label{lempart2}
Suppose that $k \geq 4$  
and $a$ and $b$ are strictly positive integers with $b/a$ large enough
and that $x$ lies in the closure 
$\overline{T_k(p_k^{\otimes a} \otimes e_1^{\otimes b})}$ in
$$(\wsymk)^{\otimes a} \otimes (\CC^k)^{\otimes b}$$
of the orbit $T_k(p_k^{\otimes a} \otimes e_1^{\otimes b})$ of
$p_k^{\otimes a} \otimes e_1^{\otimes b}$ under the natural action of the  
maximal torus $T_k$ of $\SL(k)$. Then either $x \in T_k(p_k^{\otimes a} \otimes e_1^{\otimes b})$ or the stabiliser of $x$ in $\SL(k)$ has dimension at least $k+1$.
\end{lemma}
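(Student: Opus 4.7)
The plan is to use the tensor structure $v = X \otimes e_1^{\otimes b}$, with $X = p_k^{\otimes a}$, to reduce the stabiliser problem to a weight-polytope analysis on $X$. Since $e_1^{\otimes b}$ is a $T_k$-weight eigenvector of weight $b\epsilon_1$ (where $\epsilon_i$ denotes the $i$th standard weight of $\CC^k$), every point $x \in \overline{T_k\cdot v}$ factors as $x = Y \otimes e_1^{\otimes b}$ for some $Y \in (\wsymk)^{\otimes a}$. Decomposing $X = \sum_w v_w^X$ into $T_k$-weight eigenvectors, the element $Y$ is supported on a subset $S_Y$ of $\{w : v_w^X \neq 0\}$; then $x \in T_k\cdot v$ precisely when $Y$ lies in the (twisted) dense $T_k$-orbit through $X$, and otherwise $S_Y$ is contained in a proper face $F$ of the weight polytope $P_X = \mathrm{Conv}\{w : v_w^X \neq 0\}$.

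The key observation is that the stabiliser equation $(gY) \otimes (ge_1)^{\otimes b} = Y \otimes e_1^{\otimes b}$ forces $ge_1 = \beta e_1$ for some $\beta \in \CC^*$, so $\mathrm{Stab}_{\SL(k)}(x) \subseteq P_1 \cap \SL(k)$ where $P_1 = \{g : ge_1 \in \CC e_1\}$ has dimension $k^2 - k$. The remaining condition is $gY = \beta^{-b}Y$, and since $Y = y^{\otimes a}$ for some wedge $y$ on $F$, this requires $gy$ to be a scalar multiple of $y$. The problem thus reduces to computing, for each proper face $F$ of $P_X$ and each such wedge $y$, the dimension of the subgroup of $P_1 \cap \SL(k)$ on which $gy$ is the correct fixed scalar multiple of $y$.

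I would then carry out the case analysis on the vertices of $P_X$. At the ``all-powers'' vertex $y = e_1 \wedge e_1^2 \wedge \ldots \wedge e_1^k$, one computes $gy = \beta^{\binom{k+1}{2}} y$ for every $g \in P_1 \cap \SL(k)$, so the stabiliser is a finite-index subgroup of $P_1 \cap \SL(k)$ of dimension $k^2-k-1$. At the ``single-part'' vertex $y = e_1 \wedge e_2 \wedge \ldots \wedge e_k$ the top form is $\SL(k)$-invariant, so the stabiliser again has dimension $k^2-k-1$. At the mixed vertex $y = e_1 \wedge e_1^2 \wedge \ldots \wedge e_1^{k-1} \wedge e_k$, the requirement $gy \propto y$ forces the $k$th column of $g$ to lie in $\CC e_1 + \CC e_k$, a codimension $k-2$ condition that yields a stabiliser of dimension $(k-1)^2$. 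The hardest cases will be the most transverse mixed vertices, built from partitions of each $j$ using several distinct $e_i$ with $i \geq 2$; here a direct but delicate computation will reduce the stabiliser dimension to at worst $k^2-2k$, and the inequality $k^2-2k \geq k+1$ is exactly equivalent to $k \geq 4$. Higher-dimensional faces $F$ contribute limits $Y$ that are linear combinations of vertex limits, whose stabiliser contains the intersection of the vertex stabilisers, so the vertex analysis suffices.

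The main obstacle will be carrying out the mixed-vertex computation uniformly across all combinatorial types, and verifying that the hypothesis ``$b/a$ large'' is enough to force $ge_1 \in \CC e_1$ in every case so that $\mathrm{Stab}_{\SL(k)}(x) \subseteq P_1$. The role of $k \geq 4$ is precisely that it is the regime in which the worst-case mixed-vertex bound $k^2-2k \geq k+1$ holds; for $k \leq 3$ some mixed vertex limits would have stabilisers of dimension smaller than $k+1$, and the dichotomy of the lemma would fail.
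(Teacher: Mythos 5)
Your setup is sound and runs parallel to the paper's: reducing to one-parameter-subgroup limits, observing that the $e_1^{\otimes b}$ factor forces $ge_1\in\CC e_1$, and identifying the boundary points of $\overline{T_k(p_k^{\otimes a}\otimes e_1^{\otimes b})}$ with truncations of $p_k$ to faces of its weight polytope. The fatal step is the last sentence of your reduction: ``higher-dimensional faces contribute limits that are linear combinations of vertex limits, whose stabiliser contains the intersection of the vertex stabilisers, so the vertex analysis suffices.'' The containment $\mathrm{Stab}(Y_F)\supseteq\bigcap_w\mathrm{Stab}(v_w)$ points the wrong way for your purposes: to conclude that $\mathrm{Stab}(Y_F)$ has dimension at least $k+1$ you would need a \emph{lower} bound on that intersection, and large vertex stabilisers give no such bound (two subgroups of codimension $k+1$ in $\SL(k)$ can intersect in codimension $2k+2$). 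In fact the situation is exactly the opposite of what you assume: the vertices are the deepest boundary strata with the largest stabilisers (your counts $k^2-k-1$, $(k-1)^2$, $k^2-2k$ are all far above the threshold $k+1$ once $k\geq 4$), while the critical limits --- the ones whose $\SL(k)$-orbits could have codimension one --- are supported on positive-dimensional faces and have stabiliser dimension hovering right at the boundary between $k$ and $k+1$. These are precisely the points $\zdis_{\l^\s}$ and $\zdis_{\mu^\s}$ of the paper, each a sum of many weight monomials, and no soft argument locates their extra stabiliser.

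Concretely, the paper's proof first shows (Lemma \ref{largestorbits}) that any limit whose orbit could have codimension one must lie in the orbit of one of finitely many distinguished face-truncations $\zdis_{\l^\s}$, $\zdis_{\mu^\s}$; it then computes the limit $G^\s=\lim_{t\to 0}G_{\l^\s(t)\zdis}$ of the stabilisers (Proposition \ref{lemma1}), which accounts for $k$ dimensions, and finally, in three separate cases depending on $\s$ and on $k\bmod\s$, exhibits an explicit unipotent transformation $T$ fixing $\zdis_{\l^\s}$ but not lying in $G^\s$, yielding the $(k{+}1)$st dimension. That last step is where $k\geq 4$ is actually used (in the case $k\equiv -1\bmod\s$ one needs $k-1>\s$), not in a worst-case vertex count $k^2-2k\geq k+1$ as you suggest. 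Your proposal omits this entire core of the argument, so as it stands it does not prove the lemma; the vertex computations you do carry out address only cases that are already safely of codimension at least two.
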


\begin{lemma} \label{lempart1}
Suppose that $k \geq 2$ 
and $a$ and $b$ are strictly positive integers
and that $x$ lies in the closure in
$$(\wsymk)^{\otimes a} \otimes (\CC^k)^{\otimes b}$$
of the orbit $B_k(p_k^{\otimes a} \otimes e_1^{\otimes b})$ of
$p_k^{\otimes a} \otimes e_1^{\otimes b}$ under the natural action of the  
Borel subgroup $B_k$ of $\SL(k)$. Then either $x \in B_k
\overline{T_k(p_k^{\otimes a} \otimes e_1^{\otimes b})}$ or the stabiliser of $x$ in $\SL(k)$ has dimension at least $k+1$.
\end{lemma}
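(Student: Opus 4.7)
The strategy is to use the Levi decomposition $B_k = U^+_k \rtimes T_k$, where $U^+_k$ is the strictly upper triangular unipotent radical of $B_k$, and to exploit the fact that $\UU_k \subseteq U^+_k$ fixes $v := p_k^{\otimes a} \otimes e_1^{\otimes b}$. Given $x \in \overline{B_k v}$, I would choose an algebraic arc $s \mapsto b(s) \in B_k$ with $b(s) v \to x$ as $s \to 0$ (which exists by the curve selection lemma) and decompose $b(s) = u(s) t(s)$ with $u(s) \in U^+_k$, $t(s) \in T_k$. Since $\UU_k$ stabilises $v$, we may take $u(s)$ in a fixed algebraic section of $U^+_k \to U^+_k/\UU_k$.

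If after this normalisation the family $u(s)$ extends regularly to $s = 0$ with limit $u_\infty \in U^+_k$, then $t(s) v = u(s)^{-1} b(s) v$ extends to $u_\infty^{-1} x$; this limit lies in $\overline{T_k v}$ by definition, and hence $x \in u_\infty \cdot \overline{T_k v} \subseteq B_k \overline{T_k v}$, yielding the first alternative of the lemma.

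Otherwise the representative $u(s)$ in $U^+_k/\UU_k$ has a pole at $s=0$. The key observation is that for each $s$ the subgroup $b(s) \UU_k b(s)^{-1}$ stabilises $b(s) v$, so the limit of this family in the Hilbert scheme of subgroups of $\SL(k)$ is a subgroup scheme contained in $\mathrm{Stab}_{\SL(k)}(x)$. Because $u(s)$ leaves every compact set in $U^+_k/\UU_k$, this limiting subgroup scheme acquires infinitesimal directions beyond $\mathrm{Lie}(\UU_k)$ corresponding to those root subgroups that are contracted by $b(s)$. Combined with the orbit-dimension bookkeeping $\dim B_k v = \binom{k}{2}$, this already gives the coarse estimate $\dim \mathrm{Stab}_{\SL(k)}(x) \geq k$.

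The main technical obstacle is upgrading this bound from $\geq k$ to the required $\geq k+1$, equivalently ruling out $B_k$-orbits of codimension exactly one in $\overline{B_k v}$ that are not swept out by $B_k \cdot \overline{T_k v}$. The natural approach is a careful analysis of the $T_k$-weight decomposition of $\mathrm{Lie}(U^+_k)/\mathrm{Lie}(\UU_k)$: using the first-row parametrisation $(\a_2, \ldots, \a_k)$ of $\UU_k$ and the explicit formulas of Section 2 for the remaining entries of $\UU_k$, one shows that the minimal possible divergent direction in $U^+_k/\UU_k$ transverse to $B_k \overline{T_k v}$ is always accompanied by at least one further independent direction contracted in the limit, the coupling coming from the nilpotent bracket relations in $\mathrm{Lie}(U^+_k)$ together with the fact that $\mathrm{Lie}(\UU_k)$ is not a sum of root spaces. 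Carrying out this combinatorial analysis on the explicit matrix form of $\UU_k$ is the heart of the argument.
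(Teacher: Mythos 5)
Your first branch (the case where the unipotent part $u(s)$ converges) is essentially sound, but even there the normalisation step is shakier than you suggest: you want to modify $u(s)$ within its coset modulo the stabiliser of $t(s)v$, and that stabiliser is $t(s)\UU_k t(s)^{-1}$, not $\UU_k$ --- the full torus $T_k$ does not normalise $\UU_k$ (only the one-parameter subgroup $\tilde\l(t)=(t,t^2,\dots,t^k)$ does, since $\GG_k=\UU_k\rtimes\CC^*$), so ``a fixed algebraic section of $U^+_k\to U^+_k/\UU_k$'' does not achieve what you want without further argument. This is repairable, but the real problem lies elsewhere.

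The genuine gap is the step you yourself flag as ``the main technical obstacle'': upgrading the stabiliser bound from $\geq k$ to $\geq k+1$ in the divergent case. The bound $\geq k$ is essentially free (any point of $\overline{\SL(k)v}\setminus\SL(k)v$ lies on an orbit of strictly smaller dimension than $\dim\SL(k)-(k-1)$), whereas the entire content of the lemma is the exclusion of codimension-one boundary strata outside $B_k\overline{T_k v}$; your proposal only gestures at a $T_k$-weight and bracket analysis of $\mathrm{Lie}(U^+_k)/\mathrm{Lie}(\UU_k)$ without carrying it out or showing it closes up. The paper's proof does this concretely: it replaces $B_k$ by $B_{k-1}$ using $B_k=B_{k-1}\cdot\UU_k$ and the fact that $\UU_k$ fixes $v$, reads off convergence or divergence of the individual entries $b^{(m)}_{ji}$ from the coefficients of explicit basis monomials of $\wsymk$, and in the divergent case studies the span $D\subseteq\CC^k$ of the limits of the normalised columns. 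If $D\neq\CC^k$ it assembles three independent pieces of stabiliser --- the $(k-1)$-dimensional limit $\UU_k^{\infty}$ of the conjugated unipotent stabilisers, a nontrivial one-parameter subgroup supplied by Birkes' theorem for points outside the open orbit, and an explicit unipotent automorphism fixing $D$ pointwise while moving some $e_j\notin D$ --- giving dimension at least $(k-1)+2=k+1$. If $D=\CC^k$ it shows, after renormalising by an element of $B_{k-1}$, that the limit is already realised by the diagonal parts $t^{(m)}=\mathrm{diag}(b^{(m)}_{11},\dots,b^{(m)}_{kk})$ alone, hence lies in $\overline{T_k v}$. None of this dichotomy on $D$, nor any substitute for it, appears in your proposal, so as written it does not prove the lemma.
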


We will start with the proof of Lemma \ref{lempart1}.

\begin{proof} We have 
$$x \in \overline{B_k (p_k^{\otimes a} \otimes e_1^{\otimes b}})=\overline{B_{k-1} (p_k^{\otimes a} \otimes e_1^{\otimes b})}$$
as above, so there is a sequence of matrices 
\[b^{(m)}=\left(\begin{array}{ccccc}b^{(m)}_{11} & b^{(m)}_{12} & \ldots & b^{(m)}_{1k-1}& 0 \\ 0 & b^{(m)}_{22} & \ldots & b^{(m)}_{2k-1}& 0 \\  & & \ddots 
& & \\ 0 & 0 & \ldots & 0 &  b^{(m)}_{kk}\end{array}\right)\in B_{k-1} \subset \SL(k)\]
such that $b^{(m)}(p_k^{\otimes a} \otimes e_1^{\otimes b}) \to x \text{ as } m\to \infty$. Now expanding the wedge product in the definition of 
$p_k$ we get
\[b^{(m)}(p_k^{\otimes a})=\left(e_1 \wedge \ldots \wedge e_n+\ldots +(b^{(m)}_{11})^{1+2+\ldots +k}e_1 \otimes e_1^2 \otimes \ldots \otimes e_1^k\right)^{\otimes a}\]
while
$$b^{(m)}(e_1^{\otimes b}) = (b^{(m)}_{11})^b e_1^{\otimes b},$$
so by considering the coefficient of $(e_1 \wedge \ldots \wedge e_n)^{\otimes a}
\otimes e_1^{\otimes b}$ we see that $(b^{(m)}_{11})^b$ tends to a limit in $\CC$
as $m \to \infty$. Thus, by replacing the sequence $(b^{(m)})$ with a subsequence
if necessary, we can assume that
$$ b^{(m)}_{11} \to b^{(\infty)}_{11} \in \CC$$
as $m \to \infty$.

First suppose that $k=2$. Then $\symkk = \CC^2 \oplus \sym^2 \CC^2$ and
$$(\wsymk)^{\otimes a} \otimes (\CC^k)^{\otimes b} = 
(\wedge^2(\CC^2 \oplus \sym^2 \CC^2))^{\otimes a} \otimes (\CC^2)^{\otimes b}$$
and
$$p_k = e_1 \wedge (e_2 + e_1^2),$$
so if
$$b^{(m)} = \left(\begin{array}{cc}
b^{(m)}_{11} & b^{(m)}_{12}\\ 0 & b^{(m)}_{22}
\end{array}
\right) \in \SL(2)$$
then $b^{(m)}_{11} b^{(m)}_{22} = 1$ and
$$b^{(m)}(p_2^{\otimes a} \otimes e_1^{\otimes b})
= (b^{(m)}_{11})^b (e_1 \wedge (e_2 + (b^{(m)}_{11})^3 e_1^2)))^{\otimes a} \otimes 
e_1^{\otimes b}$$
$$ \to x = (b^{(\infty)}_{11})^b (e_1 \wedge (e_2 + (b^{(\infty)}_{11})^3 e_1^2)))^{\otimes a} \otimes 
e_1^{\otimes b}$$
as $m \to \infty$. If $b^{(\infty)}_{11} \neq 0$
then $x \in \SL(2)((p_2^{\otimes a} \otimes e_1^{\otimes b})$, while
if $b^{(\infty)}_{11} = 0$ then $x=0$ is fixed by $\SL(2)$ which has dimension
$3=k+1$.

Now suppose that $k>2$, and assume first that $b^{(\infty)}_{11} \neq 0$.
We have that 
$$b^{(m)}(p_k^{\otimes a} \otimes 
e_1^{\otimes b}) = (b^{(m)}_{11})^b (
b^{(m)}p_k)^{\otimes a}) \otimes 
e_1^{\otimes b} \to x$$
 and 
$ b^{(m)}_{11} \to b^{(\infty)}_{11} \in \CC \setminus \{ 0 \}$
as $m \to \infty$, so by replacing the sequence $(b^{(m)})$ with a subsequence
if necessary, we can assume that
$$(b^{(m)}_{11})^{b/a} b^{(m)}p_k \to p_k^{\infty} \in \wsymk$$
as $m \to \infty$, where
\begin{multline}
b^{(m)}p_k = b^{(m)}_{11}e_1 \wedge(b^{(m)}_{22}e_2+(b^{(m)}_{11})^2e_1^2) \wedge \ldots 
\wedge 
(b^{(m)}_{ii}e_i+b^{(m)}_{i-1i}e_{i-1}+\ldots \\
\ldots +b^{(m)}_{1i}e_1+\sum_{s=2}^{i-1} \sum_{i_1+\cdots + i_s = i } 
(b^{(m)}_{i_1 i_1}e_{i_1} + \cdots + b^{(m)}_{1 i_1} e_1)
\ldots (b^{(m)}_{i_s i_s}e_{i_s}+\cdots +b^{(m)}_{1i_s}e_1)+(b^{(m)}_{11})^ie_1^i ) \wedge \ldots 
\end{multline}
Looking at the coefficient of 
\[e_1 \wedge e_1^2 \wedge \ldots \wedge e_1^{i-1} \wedge e_j \wedge e_1^{i+1} \wedge \ldots \wedge e_1^{k}\]
when $1 \leq j \leq i \leq k$, we see that 
\[(b^{(m)}_{11})^{1+2+\ldots +(i-1) + (i+1) +\ldots +k}b^{(m)}_{ji} \]
tends to a limit in $\CC$ as $m \to \infty$, and 
so since $b^{(\infty)}_{11} \neq 0$
\[b^{(m)}_{ji} \to b^{(\infty)}_{ji} \in \CC.\]
Also
$b^{(m)}_{11} b^{(m)}_{22} \cdots b^{(m)}_{kk} = 1$
for all $m$, so 
$b^{(\infty)}_{11} b^{(\infty)}_{22} \cdots b^{(\infty)}_{kk} = 1$,
so $b^{(m)} \to b^{(\infty)} \in \SL(k)$. Therefore 
\[x = b^{(\infty)}(p_k^{\otimes a} \otimes 
e_1^{\otimes b})\]
lies in the orbit of $p_k^{\otimes a} \otimes 
e_1^{\otimes b}$ as required.

So it remains to consider the case when $b^{(\infty)}_{11}=0$. 
If $p_k^\infty = 0$ then its stabiliser is $\SL(k)$ which has dimension
$k^2-1 \geq k+1$, so we can assume that $p_k^\infty \neq 0$. 
Recall that then 
$$(b^{(m)}_{11})^{b/a} b^{(m)}p_k \to p_k^{\infty} \in \wsymk$$
and
$$ [b^{(m)}p_k] \to [p_k^{\infty}] \in \PP(\wsymk)$$
as $m \to \infty$, where 
$$
b^{(m)}p_k = b^{(m)}_{11}e_1 \wedge(b^{(m)}_{22}e_2+(b^{(m)}_{11})^2e_1^2) \wedge \ldots 
\wedge 
(b^{(m)}_{ii}e_i+b^{(m)}_{i-1i}e_{i-1}+\ldots $$
$$
\ldots +b^{(m)}_{1i}e_1+\sum_{s=2}^{i-1} \sum_{i_1+\cdots + i_s = i } 
(b^{(m)}_{i_1 i_1}e_{i_1} + \cdots + b^{(m)}_{1 i_1} e_1)
\ldots (b^{(m)}_{i_s i_s}e_{i_s}+\cdots +b^{(m)}_{1i_s}e_1)+(b^{(m)}_{11})^ie_1^i ) \wedge \ldots 
$$
 By replacing the sequence $(b^{(m)})$ with a
subsequence if necessary, we can assume that
$$[b^{(m)}_{ii}e_i+b^{(m)}_{i-1i}e_{i-1}+
\ldots +b^{(m)}_{1i}e_1] \to [c^{(\infty)}_{ii}e_i+c^{(\infty)}_{i-1i}e_{i-1}+
\ldots +c^{(\infty)}_{1i}e_1] \in \PP(\CC^k)$$
as $m \to \infty$ for $2 \leq i \leq k$, which implies that 
$$
[(b^{(m)}_{i_1 i_1}e_{i_1} + \cdots + b^{(m)}_{1 i_1} e_1)
\ldots (b^{(m)}_{i_s i_s}e_{i_s}+\cdots +b^{(m)}_{1i_s}e_1)]\,\,\, \to  $$
$$ 
[(c^{(\infty)}_{i_1 i_1}e_{i_1} + \cdots + c^{(\infty)}_{1 i_1} e_1)
\ldots (c^{(\infty)}_{i_s i_s}e_{i_s}+\cdots +c^{(\infty)}_{1i_s}e_1)]
\,\, \in \,\, \PP(\mathrm{Sym}^i\CC^k)$$
whenever $i_1 + \cdots + i_s = i \in \{2,\ldots,k\}$, and hence that
$$p_k^{\infty} \in \wedge^k(\mathrm{Sym}^{\leq k}D)$$
where $D$ is the span in $\CC^k$ of 
$$\{ e_1\} \cup \{ c^{(\infty)}_{ii}e_i+c^{(\infty)}_{i-1i}e_{i-1}+
\ldots +c^{(\infty)}_{1i}e_1 : 2 \leq i \leq k \}.$$
Moreover since $b^{(m)} \in B_{k-1}$ we have $b^{(m)}_{jk} = 0$ if $j<k$
so
$$[c^{(\infty)}_{kk}e_k+c^{(\infty)}_{k-1k}e_{k-1}+
\ldots +c^{(\infty)}_{1k}e_1] = [ e_k]$$
so $e_k \in D$. 

Note that $b^{(m)} \in B_{k-1}$ and $B_{k-1}$ normalises the maximal
unipotent subgroup $U_k$ of $B_k$ which contains the stabiliser $\UU_k$ of
$p_k$. Therefore for each $m$ there is a $(k-1)$-dimensional subgroup of
$U_k$ which stabilises $b^{(m)}p_k$, and it follows that there is a 
 $(k-1)$-dimensional subgroup $\UU_k^{\infty}$ of
$U_k$ which stabilises $p_k^\infty$.
In addition by \cite{Birkes} Theorem 6.4 if $p_k^\infty$ does not lie
in $\SL(k)p_k$ then it is stabilised by a nontrivial one-parameter subgroup
$\lambda^\infty:\CC^* \to \SL(k)$ of $\SL(k)$.
 Moreover
if $D \neq \CC^k$ then there is some $j \in \{2,\ldots,k-1\}$
such that $e_j$ is not in $D$, and then there is an automorphism of $\CC^k$
which fixes every element of $D$ and sends $e_j$ to $e_j + e_k$. This automorphism is independent of $\UU_k^\infty$ (since $\UU_k^\infty \subseteq U_k$) and the one-parameter subgroup $\lambda^\infty$ of $\SL(k)$ fixing
$p_k^\infty$, so the stabiliser of $p_k^\infty$ in $\SL(k)$ has dimension
at least
$$\dim \UU_k^\infty + 2 = k+1.$$
Thus we can assume that $D=\CC^k$, and hence $c_{ii}^{(\infty)} \neq 0$
for $2 \leq i \leq k$, so that
$$\frac{b^{(m)}_{ji}}{b^{(m)}_{ii}} \to \frac{c^{(\infty)}_{ji}}{c^{(\infty)}_{ii}} \in \CC$$ 
as $m \to \infty$. Then by applying an element of $B_{k-1}$ to $p_k^\infty$ we can assume that
$$[c^{(\infty)}_{ii}e_i+c^{(\infty)}_{i-1i}e_{i-1}+
\ldots +c^{(\infty)}_{1i}e_1] = [ e_i]$$
or equivalently that
$$[b^{(m)}_{ii}e_i+b^{(m)}_{i-1i}e_{i-1}+
\ldots +b^{(m)}_{1i}e_1] \to [e_i] $$
as $m \to \infty$ for $2 \leq i \leq k$, and hence that 
$$
[(b^{(m)}_{i_1 i_1}e_{i_1} + \cdots + b^{(m)}_{1 i_1} e_1)
\ldots (b^{(m)}_{i_s i_s}e_{i_s}+\cdots +b^{(m)}_{1i_s}e_1)] \to 
[e_{i_1}\cdots e_{i_s}]
\in \PP(\mathrm{Sym}^i\CC^k)$$
whenever $i_1 + \cdots + i_s = i \in \{2,\ldots,k\}$. Now by again
replacing the sequence $(b^{(m)})$ with a
subsequence if necessary, we can assume that
$$[b^{(m)}_{ii}e_i+b^{(m)}_{i-1i}e_{i-1}+
\ldots +b^{(m)}_{1i}e_1
+\sum_{s=2}^{i-1} \sum_{i_1+\cdots + i_s = i } 
(b^{(m)}_{i_1 i_1}e_{i_1} + \cdots + b^{(m)}_{1 i_1} e_1] \to 
[d_i^\infty] \in \PP(\symkk)$$
where
$$d_i^\infty = \gamma_i^{(\infty)} e_i + 
\sum_{s=2}^{i} \sum_{i_1+\cdots + i_s = i } 
\gamma^{(\infty)}_{i_1\ldots i_s}e_{i_1} \cdots e_{i_s} \in \symkk \setminus \{ 0 \}$$
for some $\gamma^{(\infty)}_{i_1\ldots i_s} \in \CC$. In addition
$\{d_i^\infty: 1 \leq i \leq k \}$ is linearly independent so that
$$[p_k^\infty] = [d_i^\infty \wedge \cdots \wedge d_k^\infty ] \in \PP(\wsymk)$$
and $p_k^\infty = \lim_{m \to \infty} t^{(m)} p_k$ where
$t^{(m)}$ is the diagonal matrix with entries $b^{(m)}_{11},\ldots,b^{(m)}_{kk}$. 

Thus we can assume that $p_k^\infty \in \overline{T_k p_k}$ where $T_k$ is the standard maximal torus in $\SL(k)$, which completes the proof of Lemma
\ref{lempart1}.
\end{proof}

It therefore remains to prove Lemma \ref{lempart2}. 
We can continue with the notation above and use the following standard result:

\begin{lemma}\label{boundarylemma}
Let $T$ be an algebraic torus acting on the
projective variety $Z$, and $z\in Z$. Then $y\in \overline{Tz}$ if
and only if there is $\tau\in T$, and a one-parameter subgroup
$\l: \CC^* \to T$ such that $\tau y\in \overline{\l(\CC^*) z}$.
\end{lemma}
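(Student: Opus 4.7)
The plan is to prove the easy direction from basic properties of orbit closures, and for the nontrivial direction to reduce to the classical structure theorem for torus orbit closures on projective varieties via weight polytopes.

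For the easy direction, suppose there exist $\tau \in T$ and a one-parameter subgroup $\lambda: \CC^* \to T$ such that $\tau y \in \overline{\lambda(\CC^*) z}$. Since $\lambda(\CC^*) \subseteq T$, we have $\overline{\lambda(\CC^*) z} \subseteq \overline{Tz}$, and $\overline{Tz}$ is $T$-stable, so $y = \tau^{-1}(\tau y) \in \overline{Tz}$.

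For the nontrivial direction, I would choose a $T$-equivariant embedding of $Z$ into a projective space $\PP^N$ on which $T$ acts diagonally with characters $\chi_0, \ldots, \chi_N$ in the character lattice $X^*(T)$. For $z \in Z$, pick a lift $\tilde z = (\tilde z_0, \ldots, \tilde z_N) \in \CC^{N+1} \setminus \{0\}$ and set $S(z) = \{i : \tilde z_i \neq 0\}$ and $P(z) = \conv \{\chi_i : i \in S(z)\} \subset X^*(T) \otimes \RR$. The key input is the classical structure theorem for torus orbit closures (see e.g. \cite{GIT}): the closure $\overline{Tz}$ is the disjoint union $\bigcup_F T z_F$ indexed by faces $F$ of $P(z)$, where $z_F \in \PP^N$ is obtained from $\tilde z$ by setting to zero the coordinates with $\chi_i \notin F$. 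Moreover, for each face $F$ one can choose a one-parameter subgroup $\lambda_F: \CC^* \to T$ whose pairing with $\chi_i - \chi_j$ is strictly positive whenever $\chi_i \notin F$ and $\chi_j \in F$; then $z_F = \lim_{t \to 0} \lambda_F(t) \cdot z$ lies in $\overline{\lambda_F(\CC^*) z}$.

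Given $y \in \overline{Tz}$, the structure theorem produces a face $F$ of $P(z)$ and an element $\sigma \in T$ with $y = \sigma z_F$. Setting $\tau = \sigma^{-1}$ and $\lambda = \lambda_F$ yields $\tau y = z_F \in \overline{\lambda(\CC^*) z}$, as required. The main obstacle is the invocation of the structure theorem decomposing $\overline{Tz}$ according to the faces of the weight polytope; an alternative and more self-contained route would be a curve-selection argument, writing $y = \lim_{n \to \infty} \tau_n z$ for some sequence $\tau_n \in T$ and then, after translating by an element of $T$ to control the escape to infinity, extracting a one-parameter subgroup from the asymptotic behaviour of $(\tau_n)$ in a toric compactification of $T$, reducing to the case already handled.
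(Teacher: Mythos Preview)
The paper does not give a proof of this lemma at all: it is introduced with the phrase ``use the following standard result'' and then simply applied. So there is no proof in the paper for you to be compared against.

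Your argument is a correct and standard way to establish the result. The easy direction is immediate, and for the nontrivial direction the reduction to a $T$-equivariant projective embedding followed by the face decomposition of $\overline{Tz}$ according to the weight polytope $P(z)$ is exactly the classical toric picture. The only point worth tightening is the existence of the $T$-equivariant embedding $Z \hookrightarrow \PP^N$ with diagonal $T$-action: for an arbitrary projective variety this uses the fact that some power of an ample line bundle admits a $T$-linearisation (or one can simply replace $Z$ by the orbit closure $\overline{Tz}$, which is a projective toric variety and hence admits such an embedding). Once that is granted, the identification of the $T$-orbits in $\overline{Tz}$ with the faces of $P(z)$, and the production of each $z_F$ as a limit $\lim_{t\to 0}\lambda_F(t)\cdot z$ along a one-parameter subgroup separating $F$ from the rest of the polytope, gives exactly what is needed. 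Your alternative sketch via a toric compactification of $T$ and curve selection is also viable and amounts to the same structure theorem seen from the fan side rather than the polytope side.
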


Hence we may assume without loss of generality that 
there is a one-parameter subgroup
$$ t \mapsto \lambda(t) = \left( \begin{array}{ccccc} t^{\lambda_1} & 0 & \cdots & & 0 \\
       0 & t^{\lambda_2} & 0 & \cdots & 0 \\
        & & \cdots & & \\
0 & \cdots & & 0 & t^{\lambda_k} 
\end{array} \right)$$
of $\SL(k)$ such that $\lambda_1 > 0$ and $t^{\lambda_1b/a}\lambda(t)p_k \to p_k^\infty$ as $t \to 0$. Therefore
$$p_k^\infty = \lim_{t\to 0} t^{\lambda_1b/a} e_1 \wedge (e_2 + t^{2\lambda_1 - \lambda_2}e_1^2) \wedge \cdots\wedge (e_k + \sum_{s=2}^k \sum{i_1 + \cdots + i_s = k} t^{\lambda_{i_1} + \cdots + \lambda_{i_s} - \lambda_k} e_{i_1} \cdots e_{i_s})$$
where $\lambda_1 + \cdots + \lambda_k = 0$.  We are assuming that $p_k^\infty \neq 0$ so
$$[p_k^\infty] = \lim_{t\to 0} [e_1 \wedge (e_2 + t^{2\lambda_1 - \lambda_2}e_1^2) \wedge \cdots\wedge (e_k + \sum_{s=2}^k \sum{i_1 + \cdots + i_s = k} t^{\lambda_{i_1} + \cdots + \lambda_{i_s} - \lambda_k} e_{i_1} \cdots e_{i_s})].$$
If $\lambda_{i_1} + \cdots + \lambda_{i_s} < \lambda_j$ for some $j \in \{2,\ldots, k-1 \}$ 
and $s \geq 2$ and $i_1, \ldots, i_s \geq 1$ such that $i_1 + \cdots + i_s = j$, then $[p_k^\infty]$ is independent of $e_j$ and so as above the stabiliser of $p_k^\infty$ in $\SL(k)$ has dimension at least $k+1$. So we can assume that
\begin{equation} \label{canassume} \lambda_{i_1} + \cdots + \lambda_{i_s} \geq \lambda_j \end{equation} for any $j \in \{2,\ldots, k-1 \}$ 
and $s \geq 2$ and $i_1, \ldots, i_s \geq 1$ such that $i_1 + \cdots + i_s = j$,
and in particular that $\lambda_j \leq j \lambda_1$ for each $j \in \{2,\ldots, k-1 \}$.
Let
\begin{equation} \label{rhodefn} \rho_j = j\lambda_1 - \lambda_j \end{equation}
for $j \in \{1,\ldots, k-1 \}$; then $\rho_1 = 0$ and $\rho_j \geq 0$ and 
$$\rho_{i_1} + \cdots + \rho_{i_s} \leq \rho_j$$ for any $j \in \{2,\ldots, k-1 \}$ 
and $s \geq 2$ and $i_1, \ldots, i_s \geq 1$ such that $i_1 + \cdots + i_s = j$.
In addition looking at the coefficient of 
$$e_1 \wedge e_2 \wedge \cdots \wedge e_{k-1} \wedge e_{i_1} \cdots e_{i_s}$$
where $i_1 + \cdots + i_s = k$, we find that 
$$0 \leq \lambda_1 b/a + \lambda_{i_1} + \dots + \lambda_{i_s} - \lambda_k = \lambda_1(b/a + k(k+1)/2) -
(\rho_{i_1} + \cdots + \rho_{i_s} + \rho_2 + \cdots + \rho_{k-1}),$$
and since $p_k^{\infty} \neq 0$ there is some $i_1, \ldots, i_s$ with
$i_1 + \cdots + i_s = k$ and  
\begin{equation} \label{page25} \lambda_1 b/a + \lambda_{i_1} + \dots + \lambda_{i_s} = \lambda_k \end{equation}
or equivalently
$$ \lambda_1(b/a + k(k+1)/2) =
\rho_{i_1} + \cdots + \rho_{i_s} + \rho_2 + \cdots + \rho_{k-1}.$$
Thus
\begin{equation} \label{hey}
p_k^\infty = \lim_{t\to 0}  e_1 \wedge (e_2 + t^{2\lambda_1 - \lambda_2}e_1^2) \wedge \cdots \end{equation} 
$$ \cdots \wedge (e_{k-1} + \sum_{s=2}^{k-1} \sum_{i_1 + \cdots + i_s = k-1} t^{\lambda_{i_1} + \cdots + \lambda_{i_s} - \lambda_{k-1}} e_{i_1} \cdots e_{i_s})
\wedge (t^{\lambda_1b/a}\sum_{s=2}^{k} \sum_{i_1 + \cdots + i_s = k} t^{\lambda_{i_1} + \cdots + r_{i_s} - r_{k}} e_{i_1} \cdots e_{i_s}$$
$$=  e_1 \wedge  \cdots\wedge (e_{k-1} + 
\sum_{s=2}^{k-1} \sum_{i_1 + \cdots + i_s = k-1:\rho_{i_1} + \cdots + \rho_{i_s} = \rho_{k-1}} e_{i_1} \cdots e_{i_s})
\wedge 
$$
$$(\sum_{s=2}^{k} \sum_{\substack{i_1 + \cdots + i_s = k\\
\lambda_1(b/a + k(k+1)/2) =
\rho_{i_1} + \cdots + \rho_{i_s} + \rho_2 + \cdots + \rho_{k-1}
}} e_{i_1} \cdots e_{i_s})
$$
is independent of $e_k$ and hence is fixed by the automorphisms of $\CC^k$ which
fix $e_1, \ldots, e_{k-1}$ and send $e_k$ to $e_k + e_j$ for $j \in \{1, \ldots, k-1\}$, as well as by the one-parameter subgroup 
$$ \lambda(t) = \left( \begin{array}{ccccc} t^{\lambda_1} & 0 & \cdots & & 0 \\
       0 & t^{\lambda_2} & 0 & \cdots & 0 \\
        & & \cdots & & \\
0 & \cdots & & 0 & t^{\lambda_k} 
\end{array} \right)$$
of $T_k$. 
Thus to complete the proof of Lemma \ref{lempart2} and hence of Theorem \ref{mainthm},
it suffices to find an additional one-dimensional stabiliser, which will be
done in the rest of this section.

Letting 
\[ \zdis= [p_k] = [ e_1\wedge ( e_2 + e_1^2) \wedge \ldots \wedge (\sum_{i_1+\ldots +i_s=k}e_{i_1}\ldots e_{i_s})]\]
as at (\ref{zdisdef}) we have
\begin{eqnarray*}
\l(t)\zdis=[t^{\l_1}e_1 \wedge (t^{\l_2}e_2+ t^{2\l_1}e_1^2)
\wedge \ldots \wedge (\sum_{i_1+\ldots +i_s=k}
t^{\l_{i_1}+\ldots +\l_{i_s}} e_{i_1}\ldots e_{i_s})]=\\
=[t^{\l_1+\ldots +\l_k}(e_1\wedge \ldots \wedge e_k) +
t^{\l_1+2\l_1+\l_3+\ldots +\l_k}(e_1 \wedge e_1^2 \wedge e_3 \wedge
\ldots \wedge e_k) + \ldots ].
\end{eqnarray*}
The generic term in this expression is
\[t^{\l_{\e_1}+\l_{\e_2}+\ldots \l_{\e_k}} (\be_{\e_1} \wedge \ldots \wedge \be_{\e_k}),\ \Sigma(\e_i)=i\] 
where
\begin{equation}\label{eform}
\l_\tau=\sum_{i\in \tau}\l_i \text{ and } \be_\tau=\Pi_{i\in \tau}e_i \text{ if } \tau=(i_1,\ldots, i_s). 
\end{equation}

\begin{defn} \label{4.1plus} For any one-parameter subgroup $\lambda$ as above
let
\begin{itemize}
\item $m_{\l}=\min_{\substack{(\e_1,\ldots \e_k)\\ 1 \leq \Sigma(\e_i) \leq k}}(\l_{\e_1}+\l_{\e_2}+\ldots
\l_{\e_k})$,
\item $\zdis_\l=[\sum_{1 \leq \Sigma \e \leq k, \l_{\e}=m_{\l}}\be_\e]$,
\item  $m_{\l}[i]=\min_{\Sigma(\e)=i}\l_{\e}$ for $1\le i \le k$,  
\item $\zdis_\l[i]=[\sum_{\Sigma \e=i, \l_{\e}=m_{\l}[i]}\be_\e]$.
\end{itemize}
Let $\mathcal{O}_\l$ denote the $\SL(k)$-orbit of
$\zdis_\l$. 
\end{defn}

 It is clear that the one-parameter subgroup $\tilde{\l}(t)=(t,t^2,\ldots, t^k)$ stabilises $\zdis$, where $\zdis$ is defined as at
\eqref{zdisdef}, and therefore $\zdis=\bz_{\tilde{\l}}$
and its $\SL(k)$-orbit is equal to its $\GL(k)$-orbit.

We need a more precise description of the orbit structure
of the closure of the orbit
$\calo_0=\calo_{\tilde{\l}}$. Since $\tilde{\l}_i=i\tilde{\l}_1$ for $i=1,\ldots ,k$, for $\l \neq \tilde{\l}$ we have a smallest index $ \s \in \{2, \ldots, k\}$ with $\l_\s \neq \s \l_1$. 
\begin{definition}
We call $\s=Head(\l)$ the head of $\l=(\l_1,\ldots, \l_n)$
if
\[\l_i=i\l_1 \text{ for } i<\s \text { and } \l_\s \neq \s \l_1.\]
If $\l_\s<\s \l_1$ then we call $\l$ {\em regular }; otherwise we call
$\l$ {\em degenerate}. 
\end{definition}

We will say that a one-parameter subgroup $\l$ is {\em maximal}
if the closure of the orbit $\GL(k)\cdot \bz_\l$ is a maximal boundary component of the closure of the orbit of $\bz$. 


\begin{definition}
Fix $0< \vare <1$ and $2 \le \s \le k$. Let $\l^{\s}=(\l_1^\s,\ldots
,\l_k^\s)$ and $\mu^{\s}=(\mu_1^\s,\ldots, \mu_k^\s)$ be the following one-parameter subgroups of $\GL(k)$:
\begin{equation}
\l_i^\s= 
 i-\lfloor \frac{i}{\s} \rfloor \vare \text{ for } 1\le i\le k, 
\end{equation}
\begin{equation}
\mu_i^\s=\begin{cases} i \text{ for } i \neq \s, i\le k, \\ \s+\varepsilon \text{ for } i=\s. 
 \end{cases}
\end{equation}
\end{definition}

It is easy to see that $\mathrm{Head}(\l^{\s})=\mathrm{Head}(\mu^\s)=\s$, and $\l^\s$ is regular, whereas $\mu^\s$ is degenerate.

\begin{definition}\label{defdimension}
Let $\l$ be a 1-parameter subgroup. We call 
\[\sharp\{i:\bz_\l[i]=e_i\}\]
the toral dimension of the limit point $\bz_\l$.  
\end{definition}

\begin{lemma}\label{largestorbits} 
If the $\SL(k)$-orbit of $p_k^\infty$ has codimension one in 
$\overline{\SL(k)p_k}$, then $[p_k^\infty]$ lies in the orbit of one of
$\zdis_{\l^2},\ldots , \zdis_{\l^k}$ or 
$\zdis_{\mu^2},\ldots, \zdis_{\mu^{k-1}}$.
\end{lemma}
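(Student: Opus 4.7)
The plan is to use Lemma~\ref{boundarylemma} to reduce the problem to classifying one-parameter subgroups $\l$ of the maximal torus $T_k$ of $\SL(k)$ whose limit point $\zdis_\l$ produces a codimension-one $\SL(k)$-orbit in $\overline{\SL(k)\zdis}$, and then to carry out this classification combinatorially via the head of $\l$.

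First, by Lemma~\ref{boundarylemma}, every boundary point of $\overline{\SL(k)\zdis}$ lies, after acting by $\SL(k)$, in $\overline{\l(\CC^*)\zdis}$ for some one-parameter subgroup $\l:\CC^* \to T_k$, hence equals $\zdis_\l$. Since $\tilde{\l}=(1,2,\ldots,k)$ stabilises $\zdis$, the limit $\zdis_\l$ depends on $\l$ only up to adding integer multiples of $\tilde{\l}$ and positive rescaling, so the problem reduces to a combinatorial question in this quotient space of one-parameter subgroups.

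Next I would make a dimension count. The orbit $\SL(k)\zdis \cong \SL(k)/\UU_k$ has dimension $k^2-k$, so a codimension-one orbit has stabiliser of dimension exactly $k$ in $\SL(k)$. The stabiliser of $\zdis_\l$ always contains a unipotent subgroup $\UU_k^\infty$ of dimension $k-1$ (the limit of $\UU_k$ along $\l$) together with the image $\l(\CC^*)$, so a codimension-one orbit forces the stabiliser to be precisely $\UU_k^\infty \cdot \l(\CC^*)$. As in the proofs of Lemmas~\ref{lempart1}~and~\ref{lempart2}, any failure of $\zdis_\l[i]$ to depend non-trivially on $e_i$ for some $i \in \{2,\ldots,k-1\}$ would give an additional translation $e_i \mapsto e_i + c e_j$ in the stabiliser and therefore raise the codimension; so codimension one is equivalent to saying that the minimum-weight terms $\be_\tau$ (over $\tau$ with $\sum\tau=i$) still involve $e_i$ for every $i \in \{2,\ldots,k-1\}$, while the toral stabiliser has rank exactly one.

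Third, introduce the head $\s=\mathrm{Head}(\l)$ and split into the regular ($\l_\s<\s\l_1$) and degenerate ($\l_\s>\s\l_1$) cases. Writing $\rho_j=j\l_1-\l_j$ in the regular case, the conditions from the proof of Lemma~\ref{lempart2} give $\rho_1=0$, $\rho_j\ge 0$, and $\rho_{i_1}+\cdots+\rho_{i_s}\le \rho_j$ whenever $i_1+\cdots+i_s=j\le k-1$; a codimension-one degeneration corresponds to an extremal ray of this polyhedral cone modulo $\tilde{\l}$. I would show, by direct inspection of which weights $\l_\tau$ achieve the minimum $m_\l[i]$, that the unique extremal ray with head $\s$ in the regular cone is represented by $\l^\s$, and symmetrically that the unique extremal ray in the degenerate case with head $\s \le k-1$ is $\mu^\s$; the boundary value $\s=k$ in the degenerate case either collapses into the regular family $\l^k$ or produces a stabiliser of dimension strictly greater than $k$. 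Finally I would verify directly that each $\zdis_{\l^\s}$ and $\zdis_{\mu^\s}$ has stabiliser of dimension exactly $k$, so its orbit is genuinely of codimension one.

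The main obstacle is the combinatorial classification in step three: one must rule out that any $\l$ off the extremal rays produces a codimension-one orbit, which requires checking that moving into the interior of the cone forces coincidences of weights $\l_{\e_1}+\cdots+\l_{\e_k}=m_\l$ that make $\zdis_\l$ independent of some $e_i$ with $2\le i\le k-1$, thereby enlarging the stabiliser. The bookkeeping is delicate because the support of $\zdis_\l[i]$ varies discontinuously across the walls of the fan of $\l$-cones, so the verification must be done wall-by-wall, separately in the regular and degenerate regions, to confirm that exactly the $\l^\s$ and $\mu^\s$ (for the specified ranges of $\s$) survive as codimension-one degenerations.
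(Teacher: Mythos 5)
Your overall strategy---reducing to torus limits via Lemma \ref{boundarylemma}, organising one-parameter subgroups by their head and by the regular/degenerate dichotomy, and singling out $\l^\s$ and $\mu^\s$---matches the paper's. But the step you yourself flag as the main obstacle is where all the content lies, and the mechanism you propose for it is not justified. You claim that codimension-one degenerations correspond to extremal rays of the polyhedral cone of admissible $\rho$'s; this correspondence is asserted, not proved, and it is not obviously correct, because what governs the orbit-closure relations is not the face structure of that constraint cone but the fan on which the support function $\l \mapsto \{\tau : \l_\tau = m_\l[\Sigma\tau]\}$ is locally constant. The paper sidesteps this entirely with a direct containment: for any admissible regular $\l$ with head $\s$ one shows $m_\l[i]=i-d(i)\vare$ and $\be_\tau \notin \bz_\l[i]$ whenever $d(\tau)>d(i)$, hence $\bz_\l[i]\subseteq \bz_{\l^\s}[i]$ for all $i$ (and $\bz_\mu[i]\subseteq\bz_{\mu^\s}[i]$ in the degenerate case), so every other torus limit either exhibits an extra stabiliser outright (e.g.\ when some $\bz_\l[s]=e_s$) or lies strictly deeper in the boundary than $\zdis_{\l^\s}$. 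Your plan substitutes an unproven polyhedral principle for this concrete computation, so as written it has a genuine gap at the decisive step.

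Two further points. First, your exclusion of the degenerate case $\s=k$ is not the right mechanism: $\zdis_{\mu^k}$ is removed from the list not because it collapses into the regular family or has a large stabiliser, but because $p_k^\infty$ here is the specific limit $\lim_{t\to 0}t^{\l_1 b/a}\l(t)p_k$, and the nonvanishing condition \eqref{page25} forces the coefficient of $e_1\wedge e_1^2\wedge\cdots\wedge e_1^k$ to vanish for $[p_k^\infty]$ while it is nonzero for $\zdis_{\mu^k}$. Reading the lemma as a context-free classification of codimension-one boundary orbits, as you do, leaves you with no way to strike $\mu^k$ from the list. Second, your final step---verifying that each $\zdis_{\l^\s}$ and $\zdis_{\mu^\s}$ has stabiliser of dimension exactly $k$, ``so its orbit is genuinely of codimension one''---is false and contradicts the rest of the paper: Cases 1--3 following Proposition \ref{lemma1} construct an additional one-dimensional unipotent stabiliser for each $\zdis_{\l^\s}$, pushing the stabiliser dimension to at least $k+1$; that is precisely how the codimension-at-least-two conclusion of Theorem \ref{mainthm} is obtained. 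The lemma is only a one-way implication (codimension one forces membership in the list), so this step is not needed, but attempting it would lead you to a wrong conclusion.
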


\begin{proof}
We can assume that $[p_k^\infty ] = \zdis_\l$ for some one-parameter subgroup $\l$. First
suppose that $\l$ is a regular one-parameter subgroup with $\mathrm{Head}(\l)=\s$ and $[p_k^\infty ] = \zdis_\l$. Without loss of generality we can assume that 
\[\l_i=i \text{ for } i <\s \text{ and } \l_\s=\s-\varepsilon.\]
We will call $d(i)=\lfloor \frac{i}{\s} \rfloor$ the defect of $i$ and $d(\tau)=d(i_1)+\ldots +d(i_s)$ the defect of $\tau=(i_1,\ldots, i_s)$, so that when $i \leq \s$ we have
$d(i) \epsilon=\rho_i$ as defined at \eqref{rhodefn}.
Since 
\[\l_{(j,\underbrace{\s,\ldots, \s}_{m})}=j+m(\s-\varepsilon) \text{ for } 1 \le j \le \s-1, m \ge 0,\]
we have  
\begin{equation}\label{minweight}
m_\l[i]\le i-d(i)\varepsilon \text{ for } 1 \le i \le k.
\end{equation}
If $\l_s < s-d(s)\varepsilon$ for $s>i$ and $s$ is the smallest index with this property then $m_\l[s]=\l_s$ and $\bz_\l[s]=e_s$, so 
\[\bz_\l[1]=e_1,\bz_\l[\s]=e_\s,\bz_\l[s]=e_s,\]
while $\zdis_\l$ is independent of $e_k$ by (\ref{hey}), so $[p_k^\infty]$
is fixed by a three-dimensional torus in $\SL(k)$ and thus
$p_k^\infty$ is fixed by a two-dimensional torus in $\SL(k)$ as well as
a unipotent subgroup of dimension $k-1$.
So we can assume that $\l_i \ge i-d(i)\varepsilon$ for $1\le i \le k$, and therefore
\[m_\l[i]=i-d(i)\varepsilon \text{ for } 1\le i \le k. \]
So  
\begin{equation}\label{notin}
\be_\tau \notin \bz_\l[i] \text{ if } d(\tau) > d(i).
\end{equation}
On the other hand the distinguished 1-parameter subgroup $\l^\s$ is defined as $\l_i^\s=i-d(i)\varepsilon$, and therefore  
\begin{equation}\label{ithcomponent}
\bz_{\l^\s}[i]=\sum_{\Sigma(\tau)=i, d(\tau)=d(i)}\be_\tau.
\end{equation}
Comparing \eqref{notin} and \eqref{ithcomponent} we conclude
\[\bz_{\l}[i] \subset \bz_{\l^\s}[i] \text { for } 1\le i \le k.\]
Now let $\mu$ be a degenerate 1-parameter subgroup with $\mathrm{Head}(\mu)=\s$. Without loss of generality we can assume again that 
\[\mu_i=i \text{ for } i <\s \text{ and } \mu_\s=\s+\varepsilon.\]
Since
\[\mu_{(\underbrace{1,\ldots 1}_{i})}=i \text{ for } 1 \le i \le k\]
we have 
\begin{equation}\label{minweightmu}
m_\mu[i]\le i. 
\end{equation}
Again, $\mu_s<s$ cannot happen for $s>\s$ since in that case $\bz_\mu[s]=e_s$ would hold and 
the codimension of $\SL(k)p_k^\infty$ would be at least two.
 So $\mu_s\ge s$ and therefore $\mu_\tau \ge \Sigma(\tau)$ with strict inequality if $\s \in \tau$. Therefore  
\begin{equation}\label{compone}
\be_\tau \notin \bz_\mu[i] \text{ if } \s \in \tau.
\end{equation}
On the other hand $\mu^\s$ satisfies equality in \eqref{minweightmu}, and   
\begin{equation}\label{comptwo}
\bz_{\mu^\s}[i]=\sum_{\Sigma(\tau)=i, \s \notin \tau}\be_\tau.
\end{equation}
Comparing \eqref{compone} and \eqref{comptwo} we get 
\[\bz_{\mu}[i] \subset \bz_{\mu^\s}[i] \text { for } 1\le i \le k,\]
and so it remains to consider the possibility that $[p_k^\infty] =
\zdis_{\mu^k}$. But by \eqref{page25} 
there is some $i_1, \ldots, i_s$ with
$i_1 + \cdots + i_s = k$ and  
$$ \lambda_1 b/a + \lambda_{i_1} + \dots + \lambda_{i_s} = \lambda_k $$
and hence
$\lambda_k > \lambda_{i_1}+\ldots +\lambda_{i_s}$.  Thus $[p_k^\infty]$ cannot be equal to $\zdis_{\mu^k}$ because the coefficient of $e_1 \wedge e_1^2 \ldots \wedge e_1^k$ is nonzero for $\zdis_{\mu^k}$ but zero for $[p_k^\infty]$,
and the result 
 follows.
\end{proof}

We summarize our information about the maximal boundary components in 
\begin{prop}\label{exactform}
 We have $\bz_{\l^\s}=\wedge_{i=1}^k\bz_{\l^\s}[i]$, 
where $\bz_{\l^\s}[i]=\oplus_{\Sigma(\tau)=i,d(\tau)=d(i)}\be_\tau$, and  
 $\bz_{\mu^\s}=\wedge_{i=1}^k\bz_{\mu^\s}[i]$ where $\bz_{\mu^\s}[i]=\oplus_{\Sigma(\tau)=i,\s \notin \tau}\be_\tau$.
\end{prop}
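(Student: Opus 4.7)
The plan is to verify Proposition \ref{exactform} by a direct weight computation using the framework set up in Definition \ref{4.1plus}. Expanding the wedge product defining $p_k$, its $i$-th factor is $\sum_{\tau:\Sigma\tau=i} \tfrac{1}{\prod_{j} i_j!}\,\be_\tau \in \sym^i\CC^k$. Under the diagonal one-parameter subgroup $\l(t) = \diag(t^{\l_1},\ldots,t^{\l_k})$, each $\be_\tau$ is scaled by $t^{\l_\tau}$ with $\l_\tau = \sum_j \l_{i_j}$, so $\l(t) p_k$ is a sum of terms of the form $t^{\l_{\e_1}+\cdots+\l_{\e_k}}\,\be_{\e_1}\wedge\cdots\wedge\be_{\e_k}$ with $\Sigma(\e_i)=i$, and the projective limit $\bz_\l$ retains exactly those with minimal total exponent $m_\l$. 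Since the subspaces $\sym^i\CC^k$ are mutually in direct sum inside $\sym^{\le k}\CC^k$, minimization of $\sum_i\l_{\e_i}$ decouples across the $k$ wedge slots: this gives both $m_\l = \sum_{i=1}^k m_\l[i]$ and the factorisation $\bz_\l = \bigwedge_{i=1}^k \bz_\l[i]$ for any such $\l$.

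With this reduction in hand, the remaining task is to identify $\bz_\l[i]$ explicitly for $\l=\l^\s$ and $\l=\mu^\s$. For $\l^\s$ I observe that $\l^\s_\tau = \sum_j(i_j - d(i_j)\vare) = i - \vare\, d(\tau)$ whenever $\Sigma\tau=i$, so minimising $\l^\s_\tau$ is equivalent to maximising the defect $d(\tau)$. The key combinatorial input is the elementary inequality $\sum_j \lfloor i_j/\s \rfloor \le \lfloor (\sum_j i_j)/\s \rfloor$, immediate from writing $i_j = \s\lfloor i_j/\s\rfloor + r_j$ with $0\le r_j<\s$ and noting that aggregating the partial remainders can only increase the floor of the sum. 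The maximum $d(i)=\lfloor i/\s\rfloor$ is clearly attained, e.g.\ by the trivial partition $\tau=(i)$, giving $m_{\l^\s}[i] = i - d(i)\vare$ and $\bz_{\l^\s}[i] = \bigoplus_{\Sigma\tau=i,\,d(\tau)=d(i)}\be_\tau$ as claimed.

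For $\mu^\s$ the computation is simpler still: when $\Sigma\tau=i$ we have $\mu^\s_\tau = i + \vare\cdot\#\{j:i_j=\s\}$, so $m_{\mu^\s}[i]=i$, attained precisely when $\s$ does not appear in $\tau$. Realisability is immediate by taking $\tau=(i)$ when $i\ne\s$, and $\tau=(1,1,\ldots,1)$ (with $\s$ ones) when $i=\s$. This yields $\bz_{\mu^\s}[i] = \bigoplus_{\Sigma\tau=i,\,\s\notin\tau}\be_\tau$, and combined with the factorisation above completes the proof.

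There is no substantive obstacle here; Proposition \ref{exactform} is essentially a bookkeeping statement that makes explicit the limit cycles for the two families of maximal one-parameter subgroups $\l^\s$ and $\mu^\s$ identified in Lemma \ref{largestorbits}. The only mild ingredient beyond direct computation is the elementary floor-function inequality used in the $\l^\s$ case; everything else is routine weight tracking.
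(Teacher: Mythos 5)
Your proof is correct and takes essentially the same route as the paper, which does not give Proposition \ref{exactform} a standalone proof but derives exactly these identities (equations \eqref{ithcomponent} and \eqref{comptwo}) in the course of proving Lemma \ref{largestorbits}; your explicit floor-function superadditivity argument $\sum_j \lfloor i_j/\s\rfloor \le \lfloor i/\s\rfloor$ simply makes precise the one step the paper leaves implicit. One cosmetic remark: the slot-by-slot decoupling is justified not by the direct-sum decomposition $\bigoplus_i \Sym^i\CC^k$ (a monomial $\be_\tau$ with $\Sigma(\tau)=i$ lies in $\Sym^{|\tau|}\CC^k$, not in $\Sym^i\CC^k$), but simply by the fact that distinct partitions index linearly independent basis monomials, so the $k$ minimisations are over independent variables and the resulting wedge monomials cannot cancel.
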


\begin{rem}\label{remark} Since the  one-parameter subgroup $\tilde{\l}(t)=(t,t^2,\ldots, t^k)$ of $\GL(k)$ stabilises $T_k \zdis$,
it follows from Lemma \ref{largestorbits} that it is
enough to prove the codimension-at-least-two property we require only for the 
one-parameter subgroups $\tilde{\l}^\s$ 
(for $2 \leq s \leq k$) and $\tilde{\mu}^\s$ 
(for $2 \leq s \leq k-1$) of $\SL(k)$ given by 
$$\tilde{\l}^\s(t) = (\l^\s(t) \tilde{\l}(t)^{q_\s})^{n_\s}$$
and
$$\tilde{\mu}^\s(t) = (\mu^\s(t) \tilde{\l}(t)^{r_\s})^{m_\s}
$$ for suitable $q_\s, r_\s \in \QQ$ and $n_\s,m_\s \in \ZZ$. But we observed at
(\ref{canassume}) that the property is satisfied by a one-parameter subgroup
$\lambda$ of $\SL(k)$ if
$ \lambda_{i_1} + \cdots + \lambda_{i_s} < \lambda_j$ for any $j \in \{2,\ldots, k-1 \}$ such that 
$ {i_1} + \cdots + {i_s} = j$,
so it is enough to consider the one-parameter subgroups $\tilde{\l}^\s$
for $2 \leq s \leq k$.  
\end{rem}

\subsection{The limit of the stabilisers}

In order to prove Lemma \ref{lempart2}, it  now suffices by  Remark \ref{remark}
to  find a $k$-dimensional unipotent subgroup of
the stabiliser $G_{\zdis_{\l^\s}}$ of $\zdis_{\l^\s}$ 
in $\GL(k)$ for each $\s$
when $\zdis_{\l^\s}=[p_k^\infty]$, since we know that $p_k^\infty$ is fixed
by a one-parameter subgroup of the maximal torus $T_k$ of
$\SL(k)$, and any unipotent group which stabilises $\zdis_{\l^\s}=[p_k^\infty]$
also stabilises $p_k^\infty$.

In this subsection we will study the limits $\lim G_{\l^\s(t)\bz}$ 
of the stabiliser groups for the one-parameter subgroups $\l^\s$ 
 for $2\le \s \le k$, and use this to prove Lemma \ref{lempart2}, which together with Lemma \ref{lempart1} will complete the proof
of  Theorem \ref{mainthm}.

\begin{prop}\label{lemma1}
$G^\s=\lim_{t \to 0} G_{\l^{\s}(t)\zdis} \subset GL(k)$ is a
$k$-dimensional subgroup of $G_{\zdis_{\l^{\s}}}$ which contains
a $k-1$-dimensional subgroup of the maximal unipotent subgroup
$U_k$ of $\SL(k)$.
\end{prop}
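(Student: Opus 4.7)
The stabiliser of $\zdis$ in $\GL(k)$ is $\GG_k = \UU_k \rtimes \CC^*$, which is $k$-dimensional, so each conjugate $G_{\l^\s(t)\zdis} = \l^\s(t)\GG_k\l^\s(t)^{-1}$ is also $k$-dimensional. The plan is to interpret $G^\s = \lim_{t\to 0} G_{\l^\s(t)\zdis}$ as the flat limit of this one-parameter family of closed subgroups of $\GL(k)$, taken inside the Grassmannian of Lie subalgebras of $\mathfrak{gl}(k)$; this limit exists by compactness, has dimension at most $k$, and by upper-semicontinuity of stabilisers (together with $\l^\s(t)\zdis \to \zdis_{\l^\s}$) is automatically contained in $G_{\zdis_{\l^\s}}$. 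It will therefore suffice to exhibit $k$ linearly independent directions inside $G^\s$.

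I would split $\GG_k = \UU_k \cdot \CC^*$ and treat the two factors separately. The $\CC^*$ factor consists of the diagonal matrices $\mathrm{diag}(z,z^2,\ldots,z^k)$, which are pointwise fixed by conjugation by the diagonal matrix $\l^\s(t)$; this one-dimensional torus therefore sits inside $G^\s$ unchanged.

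For the unipotent radical I would pass to the Lie algebra. Take the basis $X_2,\ldots,X_k$ of the Lie algebra of $\UU_k$ dual to the parameters $\a_2,\ldots,\a_k$; a direct computation from the matrix formula for $u(\a_2,\ldots,\a_k)\in\UU_k$ gives $X_l = \sum_{i=1}^{k-l+1} i\,E_{i,i+l-1}$, supported on the $(l-1)$th super-diagonal. Conjugation by $\l^\s(t)$ scales $E_{i,j}$ by $t^{\l_i^\s-\l_j^\s}$, and with $\l_i^\s = i-d(i)\vare$ and $d(i)=\lfloor i/\s\rfloor$ one finds
$$\mathrm{Ad}(\l^\s(t))X_l = t^{1-l}\sum_{i=1}^{k-l+1} i\, t^{(d(i+l-1)-d(i))\vare}\,E_{i,i+l-1},$$
in which all the exponents $d(i+l-1)-d(i)$ are nonnegative integers because $d$ is nondecreasing. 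Letting $m_l$ be their minimum over $i$ and rescaling by $t^{l-1-m_l\vare}$ extracts a nonzero limit
$$Y_l = \sum_{\substack{1\le i\le k-l+1\\ d(i+l-1)-d(i)=m_l}} i\,E_{i,i+l-1}$$
supported on the same super-diagonal and lying in the strictly upper-triangular nilpotent algebra.

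The elements $Y_2,\ldots,Y_k$ are supported on pairwise disjoint super-diagonals and are therefore linearly independent, so their span is a $(k-1)$-dimensional subspace of the strictly upper-triangular matrices. Because the locus of Lie subalgebras is closed in the Grassmannian of $(k-1)$-dimensional subspaces of $\mathfrak{gl}(k)$, this span is automatically a Lie subalgebra; as it lies in the nilpotent upper-triangular algebra it exponentiates to a $(k-1)$-dimensional unipotent subgroup of $U_k$ sitting inside $G^\s$. Together with the $\CC^*$ factor above this gives the required $k$ independent directions and forces $\dim G^\s = k$. The main conceptual obstacle is the correct set-up of the flat limit $G^\s$: because the basis vectors $X_l$ carry different weights under $\mathrm{Ad}(\l^\s(t))$, each has to be rescaled by its own power of $t$ before passing to the limit, and it is exactly this individually rescaled family, legitimate in the Grassmannian of Lie subalgebras, that pins down the correct limiting subspace.
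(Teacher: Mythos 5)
Your argument is correct and rests on the same underlying mechanism as the paper's proof --- conjugate the known $k$-dimensional stabiliser $\GG_k=\UU_k\rtimes\CC^*$ of $\zdis$ by $\l^\s(t)$, rescale each generator by its own power of $t$, and pass to the limit --- but you execute it at the Lie algebra level, whereas the paper works with the group itself. The paper substitutes $\a_i=t^{n_i^\s}\b_i$ into the polynomial entries $p_{i,j}(\a)$, proves (their Lemma on the substitution) that every entry of the conjugated matrix then lies in $\CC[\b_1,\ldots,\b_k][t]$ via the inequality $n_{a_1}^\s+\cdots+n_{a_i}^\s\ge\l_j^\s-\l_i^\s$, and extracts $k$ independent directions from the explicit limit entries $(G^\s)_{\t(i),\t(i)+i-1}=\b_1^{\t(i)-1}\b_i+q_{\t(i),\t(i)+i-1}$. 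Your version is cleaner at the key point: since each $X_l=\sum_i i\,E_{i,i+l-1}$ sits on a single super-diagonal and $\mathrm{Ad}(\l^\s(t))$ acts diagonally on matrix units, the individually rescaled limits $Y_l$ exist trivially, live on disjoint super-diagonals (so independence is free, and the Grassmannian limit of the spans really is the span of the limits, which justifies your closedness argument for bracket-stability), and your rescaling exponent $l-1-m_l\vare$ is exactly the paper's $n_l^\s$, so the two constructions agree. Separating off the $\CC^*$ factor, which commutes with $\l^\s(t)$, is also a nice simplification of the paper's handling of the $\b_1$ parameter. The one thing your route does not deliver, and which the paper's group-level parametrisation $G^\s(\b_1,\ldots,\b_k)$ is really for, is the explicit description of the entries of $G^\s$ (e.g.\ $(G^\s)_{\s,k}=\b_1^{\s-1}\b_{k-\s+1}+q_{\s,k}$ and $(G^\s)_{1,k-\s+1}=\b_{k-\s+1}$) that is used immediately afterwards in Cases 2 and 3 to show that the extra unipotent element $T$ does \emph{not} lie in $G^\s$; so your proof establishes the proposition as stated but would need to be supplemented before it could replace the paper's argument in the larger proof of Lemma \ref{lempart2}.
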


\begin{proof}

Consider the stabilizer 
\[G_{\l^{\s}(t)\zdis}=\l^{\s}(t)^{-1}G_{\zdis} \l^{\s}(t).\]
Recall that 
\[G_\bz= \left\{
\left(
\begin{array}{ccccc}
\alpha_1 & \alpha_2   & \alpha_3          & \ldots & \alpha_k\\
0        & \alpha_1^2 & 2\alpha_1\alpha_2 & \ldots & 2\alpha_1\alpha_{n-1}+\ldots \\
0        & 0          & \alpha_1^3        & \ldots & 3\alpha_1^2\alpha_ {k-2}+ \ldots \\
0        & 0          & 0                 & \ldots & \cdot \\
\cdot    & \cdot   & \cdot    & \ldots & \alpha_1^d
\end{array}
 \right) \right\}
\]
where the polynomial in the $(i,j)$ entry is
\[p_{i,j}(\ba)=\sum_{a_1+a_2+\ldots +a_i=j}\a_{a_1}\a_{a_2} \ldots \a_{a_i}.\]
Therefore, the $(i,j)$ entry of the stabilizer of $\l^s(t)\zdis$ is
\begin{equation}
(G_{\l^{\s}(t)\zdis})_{i,j}=t^{\l_i^\s-\l_j^\s}p_{i,j}(\a)
\end{equation}

If $\vare$ is small enough then $\l_1^\s < \l_2^\s < \ldots < \l_k^\s$,
and we define the positive number
\begin{equation}
n_i^\s=\max_{1\le j\le n-i+1}(\l_{j+i-1}^\s-\l_j^\s),\ i=1,\ldots ,k.
\end{equation}

Note that by definition $n_1^\s=0$ for all $\s$.

\begin{lemma}\label{limitstab}
Under the substitution
\[\b_i^\s=t^{-n_i^\s}\a_i^\s\]
we have
\[G_{\l^{\s}(t)\zdis}(\b_1,\ldots ,\b_k)\in
GL(\CC[\b_1,\ldots, \b_k][t]),\] so the entries are polynomials in
$t$ with  coefficients in $\CC[\b_1,\ldots, \b_k]$.
\end{lemma}

\begin{proof} Compute the substitution as follows:

\begin{eqnarray}\label{substitute}
G_{\l^{\s}(t)\zdis})_{i,j}=t^{\l_i^\s-\l_j^\s}\sum_{a_1+a_2+\ldots
+a_i=j}\a_{a_1}\a_{a_2} \ldots
\a_{a_i}=\\
=\sum_{a_1+\ldots
a_i=j}t^{\l_i^\s-\l_j^\s}t^{n_{a_1}^\s+n_{a_2}^\s+\ldots
+n_{a_i}^\s}\b_{a_1} \b_{a_2} \ldots \b_{a_i}.
\end{eqnarray}
By definition
\[n_{a_1}^\s \ge \l_{i+a_1-1}^\s-\l_{i}^\s;\ n_{a_2}^\s \ge
\l_{i+a_1+a_2-2}^\s-\l_{i+a_1-1}^\s;\ \ldots \ ;\ n_{a_j}^\s \ge
\l_{i+a_1+\ldots +a_i-i}^\s-\l_{i+a_1+\ldots +a_{i-1}-(i-1)}^\s.
\]
Adding up these inequalites and using $a_1+\ldots +a_i=j$ we get an
alternating sum on the left cancelling up to
\[n_{a_1}^\s+\ldots +n_{a_i}^\s \ge \l_j^\s-\l_i^\s.\]
Substituting this into \eqref{substitute} we get
\begin{equation}
(G_{\l^{\s}(t)\zdis})_{i,j}=\sum_{a_1+\ldots a_i=j}
t^{\l_i^\s-\l_j^\s}t^{n_{a_1}^\s+n_{a_2}^\s+\ldots
+n_{a_i}^\s}\b_{a_1} \b_{a_2} \ldots \b_{a_i} \in \CC[\b_1,\ldots
,\b_k][t].
\end{equation}
This proves Lemma \ref{limitstab}.
\end{proof}

As a corollary we get the existence of 
\[G^\s=\lim_{t \to 0}G_{\l^{\s}(t)\zdis}(\b_1,\ldots ,\b_k) \in
GL(\CC[\b_1,\ldots ,\b_k]).\] To prove that $\dim G^\s=k$ and
complete the proof of Proposition \ref{lemma1}, for $1 \le i \le k$ choose $\t(i)$
such that
\begin{equation}\label{deftheta}
n_i^\s=\l_{\t(i)+i-1}-\l_{\t(i)}
\end{equation}
holds. Then
\begin{eqnarray}
p_{\t(i),\t(i)+i-1}(\b_1,\ldots ,\b_k)=\sum_{a_1+\ldots
+a_{\t(i)}=\t(i)+i-1}t^{n_{a_1}^\s+\ldots +n_{a_\t(i)}^\s}\b_{a_1}
\ldots \b_{a_{\t(i)}}
\end{eqnarray}
so
\begin{multline}\label{limitentries}
(G^\s)_{\t(i),\t(i)+i-1}=\lim_{t \to 0}
t^{-n_i^\s}p_{\t(i),\t(i)+i-1}(\b_1,\ldots ,\b_k)=\lim_{t \to 0}
(t^{n_i^\s}\b_1^{\t(i)-1}\b_i+\ldots )=\\
=\b_1^{\t(i)-1}\b_i+q_{\t(i),\t(i)+i-1}
\end{multline}
where
\[q_{\t(i),\t(i)+i-1} \in \CC[\b_1,\ldots, \b_k][t].\]
It follows that the elements $\frac{d}{dt}A^\s(t(e_1+e_i)1) \in
\mathrm{Lie}(G^\s)$ are independent, where $t(e_1+e_i)=(t,0,\ldots
,0,t,0,\ldots ,0)$ with the $t$'s are in the $1$st and $i$th
position if $i>1$ but interpreted as $(2t,0,\ldots ,0)$ if $i=1$.
This completes the proof of
Proposition \ref{lemma1}.
\end{proof}

In order to prove Lemma \ref{lempart2}, it  now suffices to  find an extra one-dimensional unipotent subgroup of
the stabiliser $G_{\zdis_{\l^\s}}$ of $\zdis_{\l^\s}$ for each $\s$
when $\zdis_{\l^\s}=[p_k^\infty]$, since we know that $p_k^\infty$ is fixed
by a one-parameter subgroup of the maximal torus $T_k$ of
$\SL(k)$ and by a $k-1$-dimensional unipotent subgroup of $G^\s=\lim_{\t \to
0}G_{\l^\s(t)\zdis}$ which is contained in
the standard maximal unipotent subgroup $U_k$ of $\SL(k)$. It turns out that we have to distinguish three cases here.

\bigskip 

\noindent {\bf Case 1:} 
 $\s=k$.

\begin{proof}

Let $T_\zeta \in GL(k)$ denote the transformation
\[T_\zeta (e_i)=e_i \text{ for } i\neq k-1\ ;\ T_\zeta(e_{k-1})=e_{k-1}+\zeta e_k 
\text{ for } \zeta \in \CC.\]
Since $e_{k-1}$ does not occur just in $\zdis_{\l^\s}[k-1]$, 
$T_\zeta$ stabilises $p_k^\infty$. This gives
us a subgroup of $\SL(k)$ of dimension at least $k+1$
which stabilises $p_k^\infty$, because $T_\zeta$ is unipotent but
not upper triangular if $\zeta \neq 0$.
\end{proof}

\bigskip 

\noindent {\bf Case 2:} 
 $\s < k$ and $k\neq -1$ mod $\s.$

\begin{proof}

Let $T$ be the transformation
\begin{equation}
T(e_i)=e_i \text{ for } i\neq k\ ;\ T(e_k)=e_k+\zeta e_\s.
\end{equation}
Since $e_k$ occurs only in $\zdis_{\l^\s}[k]$, and
$\zdis_{\l^\s}[\s]=\s$, 
 we have
\begin{multline}
T\cdot \zdis_{\l^\s}=\zdis_{\l^\s}(e_1,\ldots, e_{k-1},e_k+\zeta
e_{\s})=\\
=\zdis_{\l^\s}[1] \wedge \ldots \wedge \zdis_{\l^\s}[\s-1] \wedge
e_\s \wedge \zdis_{\l^\s}[\s+1] \wedge \ldots \wedge
\zdis_{\l^\s}[k])+ \\
+\zeta \cdot \zdis_{\l^\s}[1] \wedge \ldots \wedge
\zdis_{\l^\s}[\s-1] \wedge e_\s \wedge \zdis_{\l^\s}[\s+1] \wedge
\ldots \wedge \zdis_{\l^\s}[k-1] \wedge e_\s = \zdis_{\l^\s},
\end{multline}
so $T\in G_{\zdis_{\l^\s}}$.

It is slightly harder task to show that $T \not\in G^\s=\lim_{\t \to
0}G_{\l^\s(t)\zdis}$. First, we compute $n_i$ for $i=k-\s$. We claim
that for $k \neq -1$ mod $\s$
\begin{equation}\label{maxdifference}
n_{k-\s+1}=\l_k^\s-\l_\s^\s=\l_{k-\s+1}^{\s}-\l_1^\s.
\end{equation}
Indeed,
\[\l_{j+k-\s-1}-\l_j = \ldots \text{... } \le \l_k^\s-\l_\s^\s=\l_{k-\s+1}^{\s}-\l_1^\s\]
This means that we can choose $\t(k-\s+1)=\s$ in \eqref{deftheta}
and substitute into \eqref{limitentries}
\begin{equation}\label{sigmadentry}
(G^\s)_{\s,k}=\b_1^{\s-1}\b_{k-\s+1}+q_{\s,k}(\b_1,\ldots, \b_k),
\end{equation}
where $q_{\s,k}(\b_1,\ldots, \b_k)$ is a polynomial, whose monomials
$\b_{i_1}^{b_1}\ldots \b_{i_\s}^{b_\s}$ satisfy
\begin{equation}
i_1b_1+\ldots +i_\s b_\s=k.
\end{equation}
Moreover, we can also choose $\t(k-\s+1)=1$, by
\eqref{maxdifference}, and then \eqref{limitentries} gives us
\begin{equation}\label{firstrow}
(G^\s)_{1,k-\s+1}=\b_{k-\s+1}.
\end{equation}

Suppose now that $T\in G^\s$, that is
\begin{equation}\label{aandtequal}
T=G^\s(\b_1,\ldots, \b_k) \text{ for some } \b_1\in \CC^*,
\b_2,\ldots, b_k \in \CC.
\end{equation}
Let $(T)_{i,j}$ denote the $(i,j)$ entry of $T$. Then
\[(T)_{\s,k}=\zeta \text{ , } (T)_{i,j}=0 \text{ for } i\neq j\ ,\ (T)_{i,i}=1.\]
Comparing the $(1,1)$ and $(1,k-\s+1)$ entries of $T$ and $G^\s$ we
get
\begin{equation}\label{knownbetas}
\b_1=1,\b_{\d-\s+1}=0.
\end{equation}
 Choose $\t(i)$ for $i=2,\ldots ,k$ as in
\eqref{deftheta} and let $\t(k-\s+1)=\s$. Since all off-diagonal
entries of $T$ but the $(\s,k)$ are zero, \eqref{aandtequal} forces
the following equations
\begin{eqnarray}
\b_i+q_{\t(i),\t(i)+i-1}=0 \text{ for } i\neq k-\s+1 \label{eq1}, \\
\b_{k-\s+1}+q_{\s,k}=\zeta. \label{eq2}
\end{eqnarray}
By \eqref{knownbetas}, these are $k-1$ polynomial equations in $k-2$
variables, and the Jacobian at $0$ is the origin, so we have
finitely many solutions near the origin. Therefore, for some
$\zeta$, it follows that $T$ is not in $G^\s$.
\end{proof}

\bigskip

\noindent {\bf Case 3:} 
 $\s < k$ and $d=-1$ mod $\s$.

\begin{proof}
This case works very similarly to the previous one. Suppose $k-1>\s$,
that is, if $k=c\s-1$ where $c\ge 2$ (this holds because $k\ge \s$),
the condition is that $c\s-2 >\s$, which is true for all $k\geq 4$. 

Let $T$ be the transformation
\begin{equation}
T(e_i)=e_i \text{ for } i\neq k,k-1\ ;\ T(e_{k-1})=e_{k-1}+\zeta
e_\s \ ;\ T(e_{k})=e_{k}+\zeta e_\s
\end{equation}
First we check again that $T\in G_{\zdis_{\l^\s}}$. 
We have
\begin{eqnarray*}
\zdis_{\l^\s}[\s]=e_\s \ ;\\
\zdis_{\l^\s}[\s+1]=e_{\s+1}+e_{1}e_{\s} \ ;\\
\zdis_{\l^\s}[k]=e_{k}+\sum_{i=1}^{k-1}e_ie_{k-i} \ .\\
\end{eqnarray*}
An easy computation shows that
\begin{multline}
T\cdot \zdis_{\l^\s}=\zdis_{\l^\s}(e_1,\ldots, e_{k-2},e_{k-1}+\zeta
e_{\s},e_{k}+\zeta e_{\s+1})=\\
=\zdis_{\l^\s}[1] \wedge \ldots \wedge \zdis_{\l^\s}[k-2] \wedge
(\zdis_{\l^\s}[k-1]+\zeta \zdis_{\l^\s}[\s]) \wedge
(\zdis_{\l^\s}[k]+\zeta \zdis_{\l^\s}[\s+1] =\\
= \zdis_{\l^\s}[1] \wedge \ldots \wedge
\zdis_{\l^\s}[k]=\zdis_{\l^\s}.
\end{multline}

Now we prove that $T \not\in G^\s$ in a similar way to the second 
case above. 
Since $k-1 \neq -1$ mod $\s$ we can substitute $k-1$ instead
of $k$ in \eqref{maxdifference}:
\begin{equation}
n_{k-\s}=\l_{k-1}^\s-\l_\s^\s=\l_{k-\s}^\s-\l_1^\s.
\end{equation}
Moreover, we also get the extra equation
\begin{equation}
n_{k-\s}=\l_{k}^\s-\l_{\s+1}^\s,
\end{equation}
and similarly to \eqref{sigmadentry} and \eqref{firstrow} it follows
that
\begin{eqnarray}
(G^\s)_{\s,k-1}=\b_1^{\s-1}\b_{k-\s}+q_{\s,k-1}(\b_1,\ldots, \b_k);\\
(G^\s)_{\s+1,k}=\b_1^{\s}\b_{k-\s}+q_{\s+1,k}(\b_1,\ldots, \b_k);\\
(G^\s)_{1,k-\s}=\b_{k-\s}.
\end{eqnarray}
Since $T$ differs from the identity matrix only by the entries
\[(T)_{\s,k-1}=(T)_{\s+1,k}=\zeta,\]
the equality
\[T=G^\s(\b_1,\ldots ,\b_k)\]
forces $\b_{k-\s}=0,\b_1=1$ and the analogue of \eqref{eq1}
,\eqref{eq2}:
\begin{eqnarray}
\b_i+q_{\t(i),\t(i)+i-1}=0 \text{ for } i\neq k-\s \\
\b_{k-\s}+q_{\s,k-1}=\zeta  \\
\b_{k-\s}+q_{\s+1,k}=\zeta
\end{eqnarray}
which are, again, $k+1$ nondegenerate polynomial equations in $k-1$
variables, such that for some $\zeta$ there is no solution.
\end{proof}

We have now proved Lemma \ref{lempart2}, which together with Lemma \ref{lempart1} completes the proof
of  Theorem \ref{mainthm}.


\section{Geometric description of Demailly-Semple invariants}

As an immediate consequence of Corollary 6.3, we can now prove
Theorem \ref{Gros} in the case when $p=1$.

\begin{theorem} \label{Gros2} If $k \geq 2$ then
$\GG_{k}' = \UU_k$ is a Grosshans subgroup of the special linear group 
$SL(k)$, so that $\calo(\SL(k)^{\UU_{k}})^{\SL(k)}$
is a finitely generated complex algebra
 and moreover every linear action of $\UU_{k}$ or $\GG_k$ 
on an affine or projective variety $Y$ (with respect to an ample
linearisation) which extends to a linear action of $GL(k)$ has finitely generated invariants.
\end{theorem}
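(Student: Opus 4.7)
The plan is to derive Theorem \ref{Gros2} as a formal consequence of the corollary immediately following Theorem \ref{mainthm} together with the general Grosshans machinery recalled in Section 3. The first assertion follows at once: that corollary states that $\UU_k$ is a Grosshans subgroup of $\SL(k)$ for all $k \geq 2$, so by the general theory $\calo(\SL(k))^{\UU_k}$ is a finitely generated complex algebra, and $\SL(k)/\UU_k$ embeds in the associated affine variety $\Spec(\calo(\SL(k))^{\UU_k})$ with complement of codimension at least two.

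For the second assertion I would argue as follows. Suppose first that $Y$ is affine and $\UU_k$ acts linearly on $Y$ via a linear action of $\GL(k)$. Restricting to $\SL(k) \subseteq \GL(k)$ and applying the identity
$$\calo(Y)^{\UU_k} \cong (\calo(Y) \otimes \calo(\SL(k))^{\UU_k})^{\SL(k)}$$
recorded in Section 3, the ring $\calo(Y)^{\UU_k}$ is realised as the $\SL(k)$-invariants in a finitely generated algebra; since $\SL(k)$ is reductive, classical GIT produces a finitely generated invariant subring. The $\GG_k$-invariants are then the further $\CC^*$-invariants $\calo(Y)^{\GG_k} = (\calo(Y)^{\UU_k})^{\CC^*}$, again finitely generated because $\CC^*$ is reductive. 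When $Y$ is projective with an ample linearisation $L$, the linear $\GL(k)$-action lifts to the affine cone over $Y$, and the same argument applied to the graded ring $\hat{\calo}_L(Y) = \bigoplus_m H^0(Y, L^{\otimes m})$ yields finite generation of $\hat{\calo}_L(Y)^{\UU_k}$ and of $\hat{\calo}_L(Y)^{\GG_k}$ as graded complex algebras.

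The proof is therefore essentially formal once the preceding results are in place. The genuine content lies in Theorem \ref{mainthm}: the codimension-two analysis of the closure of the $\SL(k)$-orbit of $p_k \otimes e_1^{\otimes K}$ is exactly what is needed to verify the Grosshans condition for $\UU_k \subseteq \SL(k)$, and beyond that nothing more than the reductive GIT formalism is required. There is no significant new obstacle to overcome in Theorem \ref{Gros2}: the key geometric work has already been done in the previous section, and the small cases $k=2,3$ (which are excluded from the hypothesis $k \geq 4$ of Theorem \ref{mainthm}) are disposed of by invoking the explicit computations of \cite{rousseau}, as noted in the proof of the corollary.
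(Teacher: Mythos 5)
Your proposal is correct and follows exactly the route the paper intends: Theorem \ref{Gros2} is presented there as an immediate consequence of the corollary to Theorem \ref{mainthm} (with $k=2,3$ handled via \cite{rousseau}), combined with the identity $\calo(Y)^{\UU_k} \cong (\calo(Y)\otimes\calo(\SL(k))^{\UU_k})^{\SL(k)}$ and reductivity of $\SL(k)$ and $\CC^*$ as recalled in Section 3. Your write-up simply makes explicit the formal deduction that the paper leaves implicit.
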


In particular we have the special case of Theorem \ref{fingenerated} when $p=1$.

\begin{theorem}\label{fingenerated1} 
The fibre $\calo((J_{k})_x)^{\UU_{k}}$ 
of the bundle $E_k^n$ is a finitely generated
graded complex algebra. 
\end{theorem}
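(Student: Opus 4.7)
The plan is to derive Theorem \ref{fingenerated1} as an immediate consequence of Theorem \ref{Gros2}. The key observation is that the $\UU_k$-action on the fibre $(J_k)_x$ is the restriction of a linear $\GL(k)$-action, so Grosshans finite generation applies directly.

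First, I would recall from Section \ref{jetdifferentials} that choosing local holomorphic coordinates at $x$ identifies the fibre $(J_k)_x$ with $(\CC^n)^k \cong \Hom(\CC^k,\CC^n)$. Under this identification, the matrix representation (\ref{matrixform}) with $\a_1=1$ exhibits $\UU_k$ explicitly as a unipotent subgroup of $\SL(k) \subset \GL(k)$, acting on $(J_k)_x$ by right multiplication on $\Hom(\CC^k,\CC^n)$. In particular, the $\UU_k$-action extends to a linear action of the reductive group $\SL(k)$ on the affine space $(J_k)_x$.

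Second, I would invoke Theorem \ref{Gros2}: for $k \geq 2$, $\UU_k$ is a Grosshans subgroup of $\SL(k)$, and every linear $\UU_k$-action on an affine variety which extends to $\SL(k)$ has finitely generated invariants. Concretely, the standard Grosshans identification gives
\[
\calo((J_k)_x)^{\UU_k} \;\cong\; \bigl(\calo((J_k)_x) \otimes \calo(\SL(k))^{\UU_k}\bigr)^{\SL(k)},
\]
and since $\calo(\SL(k))^{\UU_k}$ is finitely generated by Theorem \ref{Gros2} while $\SL(k)$ is reductive, classical GIT yields finite generation of the right-hand side. The case $k=1$ is trivial since $\UU_1$ is the trivial group, so $\calo((J_1)_x) = \calo(\CC^n)$ is a polynomial ring.

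Finally, I would verify compatibility with the grading: the weighted grading on $\calo((J_k)_x)$ is induced by the $\CC^*$-subgroup of $\GG_k$ in (\ref{exactsequence}), which normalises $\UU_k$ and hence preserves $\calo((J_k)_x)^{\UU_k}$. Since the isomorphism above is $\CC^*$-equivariant and the $\CC^*$-weights on each piece are compatible, the finite generation holds at the level of graded complex algebras. There is no real obstacle here beyond Theorem \ref{Gros2} itself; the whole argument is a bookkeeping exercise in the Grosshans principle, with the genuine work already carried out in establishing the codimension-at-least-two property in Theorem \ref{mainthm}.
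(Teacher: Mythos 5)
Your proposal is correct and follows essentially the same route as the paper: the paper's proof of Theorem \ref{fingenerated1} consists precisely of the Grosshans isomorphism $\calo((J_{k})_x)^{\UU_{k}} \cong (\calo((J_{k})_x) \otimes \calo(\SL(k))^{\UU_{k}})^{\SL(k)}$, with finite generation following from Theorem \ref{Gros2} and the reductivity of $\SL(k)$. Your additional remarks on the explicit identification of $(J_k)_x$ with $\Hom(\CC^k,\CC^n)$, the compatibility with the weighted grading, and the trivial case $k=1$ are harmless elaborations of details the paper leaves implicit.
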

\begin{proof}
We have
$$\calo((J_{k})_x)^{\UU_{k}} \cong (\calo((J_{k})_x) \otimes \calo(\SL(k)^{\UU_{k}})^{\SL(k)}
$$
which is finitely generated because $\calo(\SL(k)^{\UU_{k}})^{\SL(k)}$
is finitely generated and $\SL(k)$ is reductive.
\end{proof}

  Theorem \ref{mainthm} also allows us to describe the 
algebra $\calo(SL(k))^{\UU_{k}}$.
In \S 6 we constructed an embedding
of $SL(k) / \UU_k$ in the affine space $ \wsymk \otimes (\CC^k)^{\otimes K}$ for suitable large $K$,
and in Theorem \ref{mainthm} we proved that the boundary components of the 
closure $\overline{\SL(k)(p_k \otimes e_1^{\otimes K})}$ of its image  have codimension at least two.  Thus we obtain the following corollary of Theorem \ref{mainthm}:

 \begin{theorem} 
 (i) If $k \geq 4$ then
the canonical affine completion
$$\SL(k)/\!/\UU_k = \mathrm{Spec}(\calo(\SL(k))^{\UU_k})$$
of $\SL(k)/\UU_k$ is isomorphic to the closure $\overline{\SL(k)(p_k \otimes e_1^{\otimes K})}$ of the orbit ${\SL(k)(p_k \otimes e_1^{\otimes K})}
\cong \SL(k)/\UU_k$ of $p_k \otimes e_1^{\otimes K}$ in $ \wsymk \otimes (\CC^k)^{\otimes K}$ where $K = M(1+2+ \cdots + k)+1$ for any strictly
positive integer $M$;

(ii) The 
algebra 
$$\calo(SL(k))^{\UU_{k}}$$  is generated by the Pl\"{u}cker coordinates
on $\PP(\wsymk)$, which can be expressed as 
\[\{\Delta_{\bi_1,\ldots, \bi_s}:s\le k\},\]
where $\bi_j$ denotes a multi-index
corresponding to basis elements of $\Sym^{\leq k}(\CC^k)$, and  $\Delta_{\bi_1,\ldots, \bi_s}$ is the corresponding minor of $\phi(f'\ldots, f^{(k)}) \in \Hom(\CC^k, \Sym^{\leq k}(\CC^k))$, together with the coordinates
$f_1', \dots,f_k'$ of $f'$.
\end{theorem}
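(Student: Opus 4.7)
The plan for part (i) is to combine Theorem \ref{mainthm} directly with the general Grosshans-type framework reviewed in $\S 3$. Theorem \ref{mainthm} has already realised $\SL(k)/\UU_k$ as the $\SL(k)$-orbit of $p_k \otimes e_1^{\otimes K}$ inside the affine space $W_{k,K} = \wsymk \otimes (\CC^k)^{\otimes K}$, and has verified that the complement of this orbit in its closure $\overline{\SL(k)(p_k \otimes e_1^{\otimes K})}$ has codimension at least two. By the principle recalled in $\S 3$---that whenever $G/U$ is open inside an affine variety $Z$ with codimension-two complement one has $\calo(Z) \cong \calo(G/U) \cong \calo(G)^U$ and therefore $G/\!/U \cong Z$---this immediately identifies the canonical affine completion $\SL(k)/\!/\UU_k = \Spec(\calo(\SL(k))^{\UU_k})$ with the orbit closure $\overline{\SL(k)(p_k\otimes e_1^{\otimes K})}$, establishing~(i).

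For part (ii), I would read off an explicit generating set for $\calo(\SL(k))^{\UU_k}$ by pulling back the linear coordinates of the ambient affine space $W_{k,K}$ to the orbit closure. A basis for $W_{k,K}^*$ is given by tensor products of a dual basis of $\wsymk$ and a dual basis of $(\CC^k)^{\otimes K}$, and under the orbit map $g \mapsto (g\cdot p_k) \otimes (g\cdot e_1)^{\otimes K}$ such a basis element pulls back to the product of a Pl\"ucker coordinate of $g\cdot p_k \in \wsymk$ with a degree-$K$ monomial in the components $f_i' = (g\cdot e_1)_i$ of the first column of $g$. Unpacking the definition of $p_k$ and of $\phi_k$ from $\S 4$--$\S 5$ then shows that these Pl\"ucker coordinates are precisely the $k\times k$ minors $\Delta_{\bi_1,\ldots,\bi_k}$ of the matrix $\phi(f',\ldots, f^{(k)}) \in \Hom(\CC^k, \symdott)$, where each $\bi_j$ is a multi-index of length at most $k$ indexing a basis element of $\symdott = \bigoplus_{l=1}^k \Sym^l\CC^k$; this accounts for the range $s \leq k$ appearing in the statement.

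Since $\UU_k$ stabilises both $p_k$ (by Lemma \ref{mainlem}) and $e_1 \in \CC^k$, the functions $\Delta_{\bi_1,\ldots,\bi_k}$ and $f_1',\ldots, f_k'$ are all $\UU_k$-invariant on $\SL(k)$. Because every pulled-back coordinate on the orbit is a polynomial in them---a single Pl\"ucker coordinate times a degree-$K$ monomial in $f_1',\ldots,f_k'$---the subalgebra they generate coincides with $\calo(\overline{\SL(k)(p_k\otimes e_1^{\otimes K})}) = \calo(\SL(k))^{\UU_k}$, giving~(ii). Since the codimension-two analysis needed in Theorem \ref{mainthm} has already been completed, there is no serious remaining obstacle; the only potential pitfall is to check carefully that the Pl\"ucker expansion of $g\cdot p_k$ and the tensor expansion of $(g\cdot e_1)^{\otimes K}$ together recover every minor $\Delta_{\bi_1,\ldots,\bi_k}$ and every monomial in the $f_i'$ (so that no generator is missed), which amounts to a straightforward bookkeeping computation using the formula for $p_k$ and the fact that $e_1^{\otimes K}$ is a pure tensor.
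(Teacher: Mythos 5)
Your proposal follows exactly the paper's route: the paper derives this theorem as an immediate corollary of Theorem \ref{mainthm} via the Grosshans principle recalled in $\S 3$ (open embedding of $\SL(k)/\UU_k$ with codimension-two complement in an affine variety identifies that variety with $\Spec(\calo(\SL(k))^{\UU_k})$), and reads off the generators as the restrictions of the ambient linear coordinates on $\wsymk \otimes (\CC^k)^{\otimes K}$, i.e.\ Pl\"ucker coordinates times monomials in $f_1',\dots,f_k'$. Your write-up in fact supplies slightly more detail than the paper does (which essentially just cites Theorem \ref{mainthm}), and the bookkeeping you describe is consistent with the paper's Examples for $k=2,3$.
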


It follows immediately from this theorem that the non-reductive GIT quotient 
$$(J_k)_x /\!/\UU_k = \mathrm{Spec}(\calo((J_k)_x)^{\UU_k})$$
is isomorphic to the reductive GIT quotient 
$$((J_k)_x \times \overline{\SL(k)(p_k \otimes e_1^{\otimes K})})/\!/\SL(k).$$
This can be identified with the quotient of the open subset
$((J_k)_x \times \overline{\SL(k)(p_k \otimes e_1^{\otimes K})})^{ss}$
of $\SL(k)$-semistable points of 
$(J_k)_x \times \overline{\SL(k)(p_k \otimes e_1^{\otimes K})}$
by the equivalence relation $\sim$ such that $y \sim z$ if and only if the closures of the $\SL(k)$-orbits of $y$ and $z$ intersect in  
$((J_k)_x \times \overline{\SL(k)(p_k \otimes e_1^{\otimes K})})^{ss}$.
Equivalently it can be identified with the closed $\SL(k)$-orbits in 
$((J_k)_x \times \overline{\SL(k)(p_k \otimes e_1^{\otimes K})})^{ss}$.
Since 
$\overline{\SL(k)(p_k \otimes e_1^{\otimes K})}$ is the union of finitely
many $\SL(k)$-orbits, with stabilisers
$H_1 = \UU_k, H_2, \ldots, H_s$, say, we can stratify $(J_k)_x /\!/\UU_k$
so that the stratum corresponding to $H_j$ is identified with the
$H_j$-orbits in $(J_k)_x$ such that the corresponding $\SL(k)$-orbit in 
$(J_k)_x \times \overline{\SL(k)(p_k \otimes e_1^{\otimes K})}$
is semistable and closed in  
$((J_k)_x \times \overline{\SL(k)(p_k \otimes e_1^{\otimes K})})^{ss}$.

\begin{exit} When $k=2$ we have
\[J_2^{\mathrm{reg}}(1,2)=\{(f_1',f_2',f_1'',f_2'')\in
(\CC^2)^2;(f_1',f_2')\neq (0,0)\},\] and fixing a basis $\{e_1, e_2\}$
of $\CC^2$ and the induced basis $\{e_1,e_2,e_1^2,e_1e_2,e_2^2\}$ of $\CC^2 \oplus
\Sym^2\CC^2$, the map $\phi:J_2(1,2) = \Hom(\CC^2,\CC^2) \to \Hom(\CC^2,\Sym^{\le 2} \CC^2)$ of \eqref{homs} is given by
\[(f_1',f_2',f_1'',f_2'') \mapsto \left(
\begin{array}{ccccc}
f_1' & f_2' & 0 & 0 & 0 \\
\frac{1}{2!}f_1'' & \frac{1}{2!}f_2'' & (f_1')^2 & f_1'f_2' &
(f_2')^2
\end{array}
\right).
\]
The $2 \times 2$ minors of this $2 \times 5$ matrix  are $(f_1')^3,\,\, (f_1')^2 f_2',\,\,f_1'(f_2')^2,(f_2')^3$ and 
\[\Delta_{[1,2]}=f_1'f_2''-f_1''f_2'.\]
On $SL(2)$ we have $\Delta_{[1,2]}=1$ and  the algebra of invariants $\calo(SL(2))^{\UU_2}$  is generated by $f_1'$ and $f_2'$, as expected since
$\SL(2)/\UU_2 \cong \CC^2 \setminus \{ 0 \}$ and its canonical affine completion 
$\SL(2)/\!/\UU_2$ is $\CC^2$.

\end{exit}

\begin{exit} \label{7.3}  When $k=3$ the finite generation of  the Demailly-Semple algebra $\calo((J_k)_x)^{\UU_k}$ was proved by Rousseau in \cite{rousseau}.
We have
\[J_3^{\mathrm{reg}}(1,3)=\{(f_1',f_2',f_3',f_1'',f_2'',f_3'',f_1''',f_2''',f_3''')\in
(\CC^3)^3;(f_1',f_2',f_3')\neq (0,0,0)\},\] and if we fix a basis $\{e_1,e_2,e_3\}$ of
$\CC^3$ and the induced basis
\[ \{e_1,e_2,e_3,e_1^2,e_1e_2,e_2^2,e_1e_3,e_2e_3,e_3^2,e_1^3,e_1^2e_2,\ldots, e_3^3\}\]
of $\CC^3 \oplus \Sym^2 \CC^3 \oplus \Sym^3 \CC^3$, the map $\phi:\Hom(\CC^3,\CC^3)\to \Hom(\CC^3, \Sym^{\le 3}\CC^3)$ in \eqref{homs} sends 
\[(f_1',f_2',f_3',f_1'',f_2'',f_3'',f_1''',f_2''',f_3''')\] to a $3 \times 19$ matrix, whose first $9$ columns (corresponding to $\Sym^{\le 2}\CC^3$) are
\[
\left(
\begin{array}{ccccccccc}
f_1' & f_2' & f_3' & 0 & 0 & 0 & 0 & 0 & 0\\
\frac{1}{2!}f_1'' & \frac{1}{2!}f_2'' & \frac{1}{2!}f_3'' & (f_1')^2 & f_1'f_2' & (f_2')^2 & f_1'f_3' & f_2'f_3' & (f_3')^2 \\
\frac{1}{3!}f_1''' & \frac{1}{3!}f_2''' & \frac{1}{3!}f_3''' & f_1'f_1'' &
f_1'f_2''+f_1''f_2' & f_2'f_2'' & f_1'f_3''+f_3'f_1'' & f_2'f_3''+f_2''f_3' &
f_3'f_3'' 
\end{array}
\right)
,\]
and the remaining $10$ columns (corresponding to $\Sym^3\CC^3$) are
\[
\left(
\begin{array}{cccccccccc}
0 & 0 & 0 & 0 & 0 & 0 & 0 & 0 & 0 & 0\\
0 & 0 & 0 & 0 & 0 & 0& 0 & 0 & 0 & 0\\
(f_1')^3 & (f_1')^2f_2' & f_1'(f_2')^2 & (f_2')^3 & f_1'(f_3')^2 & (f_1')^2f_3' & (f_2')^2f_3' & f_2'(f_3')^2 & (f_3')^3 & f_1'f_2'f_3'
\end{array}
\right)
.\]
The $3 \times 3$ minors of this matrix together with $f_1',f_2',f_3'$
generate the algebra of invariants $\calo(SL(3))^{\UU_3}$.
\end{exit}

\section{Generalized Demailly-Semple jet bundles}

The aim of this section is to extend the earlier
constructions for $p=1$ to generalized Demailly-Semple invariant jet differentials
when $p>1$.

Let $X$ be a compact, complex manifold of dimension $n$. We fix a
parameter $1\le p \le n$, and study the maps $\CC^p \to X$.
Recall that as before we fix the degree $k$ of the map, and introduce the bundle
$J_{k,p} \to X$ of $k$-jets of maps $\CC^p \to X$, so that the
fibre over $x\in X$ is the set of equivalence classes of germs of holomorphic
maps $f:(\CC^p,0) \to (X,x)$, with the equivalence relation $f\sim
g$ if and only if all derivatives $f^{(j)}(0)=g^{(j)}(0)$ are equal for
$0\le j \le k$.
Recall also that $\GG_{k,p}$ is the group of $k$-jets of germs of biholomorphisms
of $(\CC^p,0)$, which has a natural fibrewise right action on $J_{k,p}$ with the matrix representation
given by
\begin{equation}
G_{k,p}=\left(
\begin{array}{ccccc}
\Phi_1 & \Phi_2 & \Phi_3 & \ldots & \Phi_k \\
0 & \Phi_1^2 & \Phi_1\Phi_2 & \ldots &       \\
0 & 0 & \Phi_1^3 & \ldots & \\
. & . & . & . & . \\
& & & & \Phi_1^k
\end{array}
\right),
\end{equation}
for $G_{k,p} \in \GG_{p,k}$
where
$\Phi_i \in \Hom(\Sym^i\CC^p,\CC^p)$ and $\det \Phi_1 \neq 0$. 
Recall also that $\GG_{k,p}$ is generated along its first $p$ rows, in the sense that the parameters in the first $p$ rows are independent, and
all the remaining entries are polynomials in these parameters. 
The parameters in the $(1,m)$ block are indexed by a basis of $\Sym^m(\CC^p) \times \CC^p$, so they are of the form $\a_{\nu}^l$ where $\nu \in {p+m-1 \choose m-1}$ is an $m$-tuple and $1\le l \le p$, and the polynomial in the $(l,m)$ block and entry indexed by $\tau=(\tau[1],\ldots, \tau[l]) \in {p+l-1 \choose l-1}$ and $\nu \in {p+m-1 \choose m-1}$ is given by
\begin{equation}
(G_{k,p})_{\tau,\nu}=\sum_{\nu_1+\ldots+\nu_l=\nu}\a_{\nu_1}^{\tau[1]}\a_{\nu_2}^{\tau[2]} \ldots \a_{\nu_l}^{\tau[l]}.
\end{equation}
Recall also  that $\GG_{k,p}=\UU_{k,p}\rtimes \GL(p)$ is an extension of its unipotent radical $\UU_{k,p}$ by
$\GL(p)$, and that the generalized Demailly-Semple jet bundle $E_{k,p,m} \to X$ of invariant jet differentials of
order $k$ and weighted degree $(m,\ldots, m)$ consists of the jet differentials which transform under any reparametrization $\phi\in \GG_{k,p}$ of
$(\CC^p,0)$ as
\[Q(f \circ \phi)=(J_{\phi})^mQ(f)\circ \phi,\]
where $J_\phi = \det \Phi_1$ denotes the Jacobian of $\phi$, so that $E_{k,p}=\oplus_{m \geq 0}
E_{k,p,m}$ is the graded algebra of $\GG_{k,p}'$-invariants
where $\GG_{k,p}' = \UU_{k,p} \rtimes \SL(p)$.



\subsection{Geometric description for $p>1$}
As in the case when $p=1$ our goal is to 
prove that $\GG_{k,p}'$ is a Grosshans subgroup of 
$\SL(\syms^{\leq k}(p))$ where $\syms^{\leq k}(p) = \sum_{i=1}^k \dim \Sym^i \CC^p$
 by finding a suitable embedding  of the quotient
$\SL(\syms^{\leq k}(p))/\GG_{k,p}'$. 

\begin{rem} In \cite{PR} Pacienza and Rousseau generalize the
inductive process given in \cite{dem} of constructing a smooth compactification of the
Demailly-Semple jet bundles. Using the concept of a directed
manifold, they define a bundle $X_{k,p} \to X$ with smooth fibres,
and the effective locus $Z_{k,p}\subset X_{k,p}$, and a holomorphic
embedding $J_{k,p}^{reg}/\GG_{k,p} \hookrightarrow Z_{k,p}$ which
identifies $J_{k,p}^{reg}/\GG_{k,p}$ with
$Z_{k,p}^{reg}=X_{k,p}^{reg}\cap Z_{k,p}$, so that
$Z_{k,p}$ 
 is a relative compactification of 
$J_{k,p}/\GG_{k,p}.$
We choose a different approach, generalizing the test curve model,
resulting in a holomorphic embedding of $J_{k,p}/\GG_{k,p}$ into a partial
flag manifold and a different compactification, which is a
singular subvariety of the partial flag manifold, such that the invariant
jet differentials of degree divisible by $\syms^{\leq k} (p)$ are given by
polynomial  expressions in the Pl\"{u}cker
coordinates.
\end{rem}

Fix $x \in X$ and an identification of $T_xX$ with $\CC^n$; then let
$J_k(p,n) = J_{k,p,x}$ as defined in {\S}2.  Let
\[\jetreg pn =\left\{\g \in J_k(p,n): \Gamma_1  
 \text{ is non-degenerate}
\right\}\]
where $\gamma$ is represented by
$$ \bu \mapsto \gamma(\bu)=\Gamma_1\bu+\Gamma_2\bu^2+\ldots +\Gamma_k\bu^k
$$  
with $\Gamma_i \in \Hom(\Sym^i\CC^p,\CC^p)$.
Let $N \ge n$ be any integer and define
\[\U_{k,p}=\left\{\Psi\in J_k(n,N):\exists \g \in \jetreg pn: \Psi \circ \g=0
\right\}.\]

\begin{rem}
The global singularity theory description of $\U_{k,p}$ is
\[\U_{k,p}\doteq \left\{p=(p_1,\ldots, p_N) \in J_k(n,N): \CC[z_1,\ldots ,z_n]/\langle p_1,\ldots ,p_N
\rangle \cong \CC[x,y]/\langle z_1, \ldots ,z_n \rangle^{k+1}
\right\}.\]
\end{rem}

Note, again, as in the $p=1$ case, that if $\g \in \jetreg pn$ is a
test surface of $\Psi \in \U_{k,p}$, and $\vp \in \GG_k$ is
a holomorphic reparametrization of $\CC^p$, then $\g \circ \vp$ is,
again, a test surface of $\Psi$:
\begin{diagram}[LaTeXeqno,labelstyle=\textstyle]
\label{basicideatwo}
  \CC^p & \rTo^\vp & \CC^p & \rTo^\g & \CC^n & \rTo^{\Psi} & \CC^N
\end{diagram}
\[\Psi \circ \g=0\ \ \Rightarrow \ \ \ \Psi \circ (\g \circ \vp)=0\]

\begin{exit} Let $k=2, p=2$ and let $\Psi(\bz)=\Psi'\bz+\Psi''\bz^2$ for $\bz \in \CC^n$, and
\[\g(u_1,u_2)=\g_{10}u_1+\g_{01}u_2+\g_{20}u_1^2+\g_{11}u_1u_2+\g_{02}u_2^2,\ \g_{ij}\in \CC^n.\]
Then $\Psi \circ \g=0$ has the form
\begin{eqnarray}\label{compp} & \Psi'(\g_{10})=0\ ;\
\Psi'(\g_{01})=0  \\ \nonumber &
\Psi'(\g_{20})+\Psi''(\g_{10},\g_{10})=0,\ ;\
\Psi'(\g_{11})+2\Psi''(\g_{10},\g_{01})=0,\ ;\
\Psi'(\g_{01})+\Psi''(\g_{01},\g_{01})=0, \\ \nonumber
\end{eqnarray}
\end{exit}

We introduce
\[
\cals_\g=\left\{\Psi \in J_k(n,N):\Psi \circ \g=0  \right\}
\]
and the following analogue of $\jetko 1n$:
\[\jetko nN=\left\{\Psi \in \jetk nN: \dim \ker \Psi=p
\right\}.\]
The proof of the following proposition is analogous to that of
Proposition 4.7 in \cite{bsz}, and we omit the details. We use the
notation
\[\syms^i(p)=\dim (\sym^i \CC^p) ;\ \syms^{\le k}(p)=\dim (\CC^p\oplus \sym^2 \CC^p \oplus \ldots \oplus \sym^k
\CC^p)=\sum_{i=1}^k \syms^ip.\]

\begin{prop}
\label{modelpropsgen}
\begin{enumerate}
\item[(i)] If $\g\in \jetreg pn$ then $\cals_\g \subset \jetk nN$ is a linear
  subspace of codimension $N\syms^{\le k}(p)$.
\item[(ii)] For any $\g \in \jetreg pn$, the subset $\cals_\g \cap \jetko nN$ of
$\cals_\g$ is dense.
\item[(iii)] If $\Psi \in \jetko nN$, then $\Psi$ belongs
  to at most one of the spaces $\cals_\g$. More precisely, if
$
\g_1,\g_2\in\jetreg pn,\;\ \Psi\in \jetko nN \text{ and }
  \Psi\circ \g_1=\Psi\circ \g_2=0,
$
then there exists
  $\vp \in \jetreg pp$ such that $\g_1=\g_2 \circ \vp$.
\item[(iv)] Given $\g_1,\g_2\in\jetreg 1n$, we have
  $\cals_{\g_1}=\cals_{\g_2}$ if and only if there is some
  $\vp \in \jetreg 11$ such that $\g_1=\g_2 \circ \vp$.
\end{enumerate}
  \end{prop}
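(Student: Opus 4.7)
The plan is to mirror the proof of Proposition \ref{modelprops} (which is Proposition 4.4 of \cite{bsz}), with the scalar reparametrisation $\vp\in\jetreg 11$ replaced by a multivariable reparametrisation $\vp\in\jetreg pp$ with components $\Ph_1\in \GL(p),\Ph_2,\ldots,\Ph_k$, and with the nonzero vector $\g'(0)\in\CC^n$ replaced throughout by the non-degenerate linear map $\G_1\in\Hom(\CC^p,\CC^n)$.

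For part (i) I would apply the multivariable chain rule to $\Psi\circ\g$ and extract the coefficient of each monomial of degree $m$ in $\bu\in\CC^p$ for $m=1,\ldots,k$, obtaining the direct analogue of Lemma \ref{explgp}: the equations
\[\sum_{\tau\in\Pi[m]}\frac{\comb(\tau)}{\prod_{i\in\tau}i!}\,\Psi(\G_\tau)=0 \qquad\in\Hom(\Sym^m\CC^p,\CC^N).\]
The total target dimension is $\sum_{m=1}^k\syms^m(p)\cdot N=\syms^{\le k}(p)\cdot N$, and surjectivity of the linear system $\Psi\mapsto\Psi\circ\g$ follows inductively: at each order $m$ the term $\tfrac{1}{m!}\Psi^{(m)}\circ\G_1^{\otimes m}$ is the only one involving $\Psi^{(m)}$, and since $\G_1$ is injective so is its symmetric tensor power, so $\Psi^{(m)}$ may be chosen freely to hit any prescribed target. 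Part (ii) then follows from the degree-$1$ equation $\Psi'\circ\G_1=0$, which forces $\ker\Psi'\supseteq\im\G_1$ and hence $\dim\ker\Psi'\ge p$; the strict-inequality locus is the proper determinantal closed subset of $\cals_\g$ where $\Psi'$ has rank $<n-p$, and it does not exhaust $\cals_\g$ because (using $N\ge n$) one may choose $\Psi'$ whose kernel equals $\im\G_1$ exactly and then complete it to a full $\Psi\in\cals_\g$ by the surjectivity just established.

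Part (iii) is the main obstacle and the place where the hypothesis $\Psi\in\jetko nN$ is genuinely used. Given $\g_1,\g_2\in\jetreg pn$ with $\Psi\circ\g_1=\Psi\circ\g_2=0$, minimality of $\dim\ker\Psi'$ gives $\im(\G_1)_1=\ker\Psi'=\im(\G_1)_2$, which determines a unique $\Ph_1\in\GL(p)$ via $(\G_1)_1=(\G_1)_2\circ\Ph_1$. I would then construct $\Ph_2,\ldots,\Ph_k$ recursively: assuming $\Ph_1,\ldots,\Ph_{m-1}$ have been chosen so that the $(m-1)$-jets of $\g_1$ and $\g_2\circ\vp$ agree, the degree-$m$ block of the system $\Psi\circ(-)=0$ applied to both $\g_1$ and $\g_2\circ\vp$ yields $\Psi'\circ((\g_2\circ\vp)_m-(\G_m)_1)=0$ (the contributions from $\Ph_1,\ldots,\Ph_{m-1}$ cancel by the inductive hypothesis), so the discrepancy takes values in $\ker\Psi'=\im(\G_1)_2$, and the injectivity of $(\G_1)_2$ pins down a unique $\Ph_m\in\Hom(\Sym^m\CC^p,\CC^p)$ making $(\g_2\circ\vp)_m=(\G_m)_1$. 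The delicate point that needs careful verification is the dimension matching: the solution torsor at step $m$ is $\Hom(\Sym^m\CC^p,\ker\Psi')$, of dimension $p\cdot\syms^m(p)$, precisely the number of parameters in $\Ph_m$. Finally (iv) is a formal consequence: $\cals_{\g_1}=\cals_{\g_2}$ contains a common $\Psi\in\jetko nN$ by (ii), to which (iii) then applies to produce $\vp\in\jetreg pp$; conversely $\g_1=\g_2\circ\vp$ trivially implies $\cals_{\g_1}=\cals_{\g_2}$.
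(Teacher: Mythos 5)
Your proposal is correct and takes exactly the route the paper intends: the paper omits the proof entirely, saying only that it is ``analogous to that of Proposition 4.7 in \cite{bsz}'' (i.e.\ to the $p=1$ case, Proposition \ref{modelprops}), and your argument is precisely that adaptation, with the key points (surjectivity of $\Psi^{(m)}\mapsto\Psi^{(m)}\circ\Sym^m\Gamma_1$ for the codimension count, and the recursive determination of $\Phi_m$ from the discrepancy lying in $\Hom(\Sym^m\CC^p,\ker\Psi')$) carried out correctly.
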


With the notation
\[\U_{k,p}=\U_{k,p} \cap \jetko nN,\]
we deduce from Proposition \ref{modelpropsgen} the following
\begin{corollary}
$\U_{k,p}^0$ is a dense subset of $\U_{k,p}$, and $\U_{k,p}^0$ has a
fibration over the orbit space $\jetreg pn/\jetreg pp=\jetreg
pn/\GG_{k,p}$ with linear fibres.
\end{corollary}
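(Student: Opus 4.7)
The plan is to apply Proposition \ref{modelpropsgen} essentially directly, since the corollary reads as a clean repackaging of its four parts.

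First, for density: by definition every $\Psi \in \U_{k,p}$ lies in $\cals_\g$ for some $\g \in \jetreg pn$; part (ii) of the Proposition says that $\cals_\g \cap \jetko nN$ is dense in $\cals_\g$, and since this intersection is contained in $\U_{k,p}^0 = \U_{k,p} \cap \jetko nN$, it follows that $\Psi$ lies in the closure of $\U_{k,p}^0$. This establishes density.

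Next, to exhibit the fibration, I would define a map
$$\pi: \U_{k,p}^0 \to \jetreg pn/\GG_{k,p}, \qquad \Psi \mapsto [\g],$$
sending $\Psi$ to the $\GG_{k,p}$-orbit of any test surface $\g$ with $\Psi \circ \g = 0$. This is well-defined by part (iii), which says that the set of such $\g$ is exactly one $\GG_{k,p}$-orbit under the right composition action (using the identification $\GG_{k,p} = \jetreg pp$). Surjectivity of $\pi$ follows from the non-emptiness assertion in part (ii). The fibre $\pi^{-1}([\g])$ is precisely $\cals_\g \cap \jetko nN$, which by parts (i) and (ii) is a Zariski-dense open subset of the linear subspace $\cals_\g \subset \jetk nN$ of codimension $N\syms^{\le k}(p)$; this realises the asserted linear-fibre structure.

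The main obstacle will be upgrading this set-theoretic description into a geometrically meaningful fibration, since the orbit space $\jetreg pn/\GG_{k,p}$ a priori carries no scheme structure. My strategy, modelled on the $p=1$ case treated in Propositions \ref{modelprops}--\ref{propgrass} and the embedding results of {\S}5, is to observe that the assignment $\g \mapsto \cals_\g$ is a regular $\GG_{k,p}$-invariant morphism from $\jetreg pn$ into $\grass(\mathrm{codim} = N\syms^{\le k}(p),\, \jetk nN)$, which is evident from the linear equations generalising \eqref{modeleq}; by part (iv) this descends to an injection on orbits. One then identifies $\U_{k,p}^0$ with the total space of the restriction of the tautological subbundle on this Grassmannian, intersected with $\jetko nN$, pulled back along the injection of $\jetreg pn/\GG_{k,p}$, with $\pi$ as the natural projection.
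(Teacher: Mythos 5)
Your proposal is correct and is essentially the paper's own argument: the paper offers no separate proof, simply stating that the corollary is deduced from Proposition \ref{modelpropsgen}, and your deduction (density of each $\cals_\g \cap \jetko nN$ in $\cals_\g$ giving density of $\U_{k,p}^0$ in $\U_{k,p}=\bigcup_\g \cals_\g$, and parts (i), (iii), (iv) giving the well-defined projection $\Psi \mapsto [\g]$ with fibres $\cals_\g \cap \jetko nN$ open dense in the linear spaces $\cals_\g$) is exactly the intended one. Your closing remark about realising the fibration via the map $\g \mapsto \cals_\g$ into the Grassmannian matches what the paper does immediately afterwards with $\n$ and $\bar{\n}$.
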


\begin{rem}
In fact, Proposition \ref{modelpropsgen} says a bit more, namely that
$\U_{k,p}^0$ is fibrewise dense in $\U_{k,p}$ over $\jetreg
pn/\GG_{k,p}$, but we will not use this stronger statement.
\end{rem}

By the first part of Proposition \ref{modelpropsgen} the assignment
$\g \to \cals_\g$ defines a map
\[\n: \jetreg pn \rightarrow \grass(kN,\jetk nN)\]
which, by the fourth part, descends to the quotient
\begin{equation}\label{embedding1}
\bar{\n}:\jetreg pn/\GG_{k,p} \hookrightarrow \grass(kN,\jetk nN)
\end{equation}
 (cf. Proposition
\ref{propgrass}).
Next, we want to rewrite this embedding in terms of the
identifications introduced in  \S\ref{quotientflag}. So we
\begin{itemize}
\item identify  $\jetk pn\text{ with }\Hom(\CC^{\syms^1p}\oplus \ldots \oplus \CC^{\syms^kp},\CC^n)=
\Hom(\CC^{\syms^{\leq k}(p)},\CC^n)$
where $\syms^jp=\dim\sym^j \CC^p$ and $\syms^{\leq k}(p) = \sum_{j=1}^k \syms^jp$;
\item identify  $\jetk n1{}^* \text{ with }
\symdot=\oplus_{l=1}^k \Sym^l \CC^n$.
\end{itemize}
We think of an element $v$ of $\Hom(\CC^{\syms^{\le k}(p)},\CC^n)$ as an $n
\times \syms^{\le k}(p)$ matrix, with column vectors in $\CC^n$. These
columns correspond to basis elements of $\CC^{\syms^1p}\oplus
\ldots \oplus \CC^{\syms^kp}$, and the columns in the $i$th
component are indexed by $i$-tuples $1\le t_1 \le t_2 \le \ldots
\le t_i \le p$, or equivalently by
\[(e_{t_1}+e_{t_2}+\ldots +e_{t_i}) \in \ZZ_{\ge 0}^p\]
 where $e_j=(0,\ldots,1,\ldots ,0)$ with $1$
in the $j$th place,
giving us 
$$v = (v_{10,\ldots 0},v_{01\ldots 0},\ldots ,v_{0\ldots 0k}) \in 
\Hom(\CC^{\syms^{\le k}(p)},\CC^n). $$
 The elements of $\jetreg pn$ correspond to matrices whose first $p$ columns are linearly independent. When $n \ge \syms^{\le k}(p)$  there is a smaller dense open subset $\jetnondeg pn \subset \jetreg pn$ consisting of the $n
\times \syms^{\le k}(p)$ matrices of rank $\syms^{\le k}(p)$.

Define the following map, whose components correspond to the
equations in \eqref{compp}:
\begin{eqnarray}\label{embeddef}
& \phi: \Hom(\CC^{\syms^{\leq k}(p)},\CC^n) \to \Hom(\CC^{\syms^{\leq k}(p)}, \symdot) \\
\nonumber & (v_{10,\ldots 0},v_{01\ldots 0},\ldots ,v_{0\ldots 0k})
\mapsto (\ldots , \sum_{\bs_1+\bs_2+\ldots
+\bs_j=\bs}v_{\bs_1}v_{\bs_2}\ldots v_{\bs_j}, \ldots ),
\end{eqnarray}
where on the right hand side 
$\bs\in \ZZ_{\ge 0}^p$.

\begin{exit} If $k=p=2$ then $\phi$ is given by
\[\phi(v_{10},v_{01},v_{20},v_{11},v_{02})=(v_{10},v_{01},v_{20}+v_{10}^2,v_{11}+2v_{10}v_{01},v_{02}+v_{01}^2). \]
\end{exit}

Let $P_{k,p}\subset \GL_{\syms^{\leq k}(p)}$ denote the standard
parabolic subgroup
with Levi subgroup $$\GL(\syms^1p)\times \ldots \times \GL(\syms^k p),$$
where $\syms^jp=\dim\sym^j \CC^p$ and $\syms^{\leq k}(p) = \sum_{j=1}^k \syms^jp$.
Then \eqref{embedding1} has the following reformulation, analogous
to Proposition \ref{embedfinal}.

\begin{prop}
The map $\phi$ in \eqref{embeddef} is a $\GG_{k,p}$-invariant
algebraic morphism
\[ \phi: \jetreg pn \rightarrow \Hom(\CC^{\syms(p)},\symdot)\]
which induces an injective map $\phi^\grass$ on the
$\GG_{k,p}$-orbits:
\[\phi^\grass: \jetreg pn /\GG_{k,p} \hookrightarrow
\grass_{\syms^{\le k}(p)}(\symdot) \]
and 
\[\phi^\flag: \jetreg pn /\GG_{k,p} \hookrightarrow
\flag_{\syms^1(p),\ldots,
\syms^k(p)}(\symdot)\hookrightarrow
\Hom(\CC^{\syms(p)},\symdot)/P_{k,p} .\]
Composition with the Pl\"{u}cker embedding gives
\[\phi^\Proj=\mathrm{Pluck} \circ \phi^\grass: \jetreg pn /\GG_{k,p} \hookrightarrow \PP(\wedge^{\syms^{\le k}(p)}\symdot). \]
\end{prop}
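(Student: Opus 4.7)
The plan is to transcribe the $p=1$ argument that gave Proposition \ref{embedfinal}, with Proposition \ref{modelprops} replaced by its $p>1$ analogue Proposition \ref{modelpropsgen}. Three ingredients must be established: (a) $\GG_{k,p}$-equivariance of $\phi$ with respect to the right action of $\GG_{k,p}$ on $\Hom(\CC^{\syms^{\le k}(p)},\symdot)$ given by right multiplication via the matrix representation of Proposition \ref{entry}; (b) that $\phi(\g)$ has maximal rank $\syms^{\le k}(p)$ for $\g\in\jetreg pn$, so that $\phi^{\grass}$ and $\phi^{\flag}$ take values in the stated Grassmannian and flag manifold; and (c) that the induced maps on orbits are injective.

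For (a), I would verify the identity
\[ \phi(\g\circ\vp)=\phi(\g)\cdot G_{k,p}(\vp) \qquad\text{for }\vp\in\GG_{k,p} \]
directly from \eqref{embeddef}. On the left one substitutes the Taylor expansion of $\g\circ\vp$ into the symmetric sums $\sum_{\bs_1+\cdots+\bs_j=\bs}v_{\bs_1}\cdots v_{\bs_j}$; on the right one uses Proposition \ref{entry}, which expresses the $(\tau,\nu)$-entry of $G_{k,p}(\vp)$ as $\sum_{\nu_1+\cdots+\nu_l=\nu}\a_{\nu_1}^{\tau[1]}\cdots\a_{\nu_l}^{\tau[l]}$. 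The two expressions agree term by term by the chain rule applied to the symmetric powers of $\g\circ\vp$, exactly paralleling Lemma \ref{explgp}. Since $G_{k,p}(\vp)$ lies in the parabolic $P_{k,p}$ (block upper triangular with invertible diagonal blocks corresponding to the decomposition $\CC^{\syms^{\le k}(p)}=\bigoplus_{i=1}^k\sym^i\CC^p$), right multiplication by $G_{k,p}(\vp)$ preserves both the total image of $\phi(\g)$ in $\symdot$ and the nested flag consisting of the images of the first $\syms^{\le i}(p)$ basis vectors for $i=1,\ldots,k$; so both $\phi^{\grass}$ and $\phi^{\flag}$ descend to $\GG_{k,p}$-orbits.

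For (b), Proposition \ref{modelpropsgen}(i) gives $\mathrm{codim}\,\cals_\g=N\,\syms^{\le k}(p)$ for $\g\in\jetreg pn$; since $\cals_\g$ is the annihilator of $\mathrm{im}(\phi(\g))$ tensored with $\CC^N$ exactly as in \eqref{defsgamma}, this forces $\dim\mathrm{im}(\phi(\g))=\syms^{\le k}(p)$. For (c), if $\phi^{\grass}(\g_1)=\phi^{\grass}(\g_2)$ then $\mathrm{im}(\phi(\g_1))=\mathrm{im}(\phi(\g_2))$, hence $\cals_{\g_1}=\cals_{\g_2}$, and Proposition \ref{modelpropsgen}(iv) produces $\vp\in\jetreg pp=\GG_{k,p}$ with $\g_1=\g_2\circ\vp$; so $\g_1$ and $\g_2$ lie in the same $\GG_{k,p}$-orbit. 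Injectivity of $\phi^{\flag}$ is then automatic, since $\phi^{\grass}$ factors as the projection from the flag manifold to its top constituent composed with $\phi^{\flag}$. Composing $\phi^{\grass}$ with the Pl\"ucker embedding $\grass_{\syms^{\le k}(p)}(\symdot)\hookrightarrow\PP(\wedge^{\syms^{\le k}(p)}\symdot)$ yields $\phi^{\Proj}$.

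The main obstacle is the multi-index bookkeeping in step (a): one must carefully identify the basis of $\CC^{\syms^{\le k}(p)}$ with multi-indices $\bs\in\ZZ_{\ge 0}^p$ with $1\le|\bs|\le k$, match these against the $(\tau,\nu)$ indexing of Proposition \ref{entry}, and check that the combinatorial coefficients arising from expanding $\g\circ\vp$ by the chain rule are exactly the polynomial entries of $G_{k,p}$. Once the equivariance is in place, the remainder of the proof is a direct transcription of the $p=1$ argument.
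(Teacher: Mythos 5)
Your proposal is correct and follows essentially the same route as the paper, which states this proposition as a reformulation of the embedding \eqref{embedding1} obtained from Proposition \ref{modelpropsgen} and leaves the details to the reader as a direct transcription of the $p=1$ case (Proposition \ref{embedfinal}). Your three steps --- equivariance of $\phi$ under right multiplication by the parabolic-valued matrix $G_{k,p}(\vp)$, maximal rank of $\phi(\g)$ from the codimension count in Proposition \ref{modelpropsgen}(i), and injectivity on orbits from part (iv) --- are exactly the intended argument, and you correctly read part (iv) with $\jetreg pn$ and $\jetreg pp$ in place of the paper's evidently miscopied $\jetreg 1n$ and $\jetreg 11$.
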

As in the case when $p=1$, we introduce the following notation
\[X_{n,k,p}=\phi^{\Proj}(\jetreg pn),\,\,\, Y_{n,k,p}=\phi^{\Proj}(\jetnondeg pn) \subset \PP(\wedge^{\syms^{\le k}}(\symdott)).\]



\begin{definition}
Let $n\ge \syms^{\le k}(p)=\syms^1(p)+\ldots +\syms^k(p)$. Then the open subset of $\PP(\wedge^{\syms^{\le k}(p)}(\symdot))$ where the projection to $\wedge^{\syms^{\le k}(p)}\CC^n$ is nonzero is denoted by $A_{n,k,p}$. 
\end{definition}

Since $\phi^\grass$ and $\phi^\Proj$ are $\GL(n)$-equivariant, and for $n \ge \syms^{\le k}(p)$ the action of $\GL(n)$ is transitive on $\Hom^{\mathrm{nondeg}}(\CC^{\syms^{\le k}(p)},\CC^n)$, we have  

\begin{lemma}
\begin{enumerate}
\item[(i)] If $n \ge \syms^{\le k}(p)$ then $X_{n,k,p}$ is the $\GL(n)$ orbit of
\begin{equation}\label{zdisdefp}\zdis=\phi^{\Proj}(e_1,\ldots, e_{\sym^{\le k}(p)})=[\wedge_{j_1+\ldots +j_p\le k} \sum_{\bi_1+\ldots +\bi_s=(j_1,\ldots,j_p)}e_{\bi_1}\ldots e_{\bi_s} ]
\end{equation}
in $\PP(\wedge^{\syms^{\le k}(p)}(\symdot))$.
\item[(ii)] If $n \ge \syms^{\le k}(p)$ then $X_{n,k,p}$ and $Y_{n,k,p}$ are finite unions of $\GL(n)$ orbits.
\item[(iii)] For $k>n$ the images $X_{n,k,p}$ and $Y_{n,k,p}$ are $\GL(n)$-invariant quasi-projective varieties, though they have no dense $\GL(n)$ orbit.   
\end{enumerate}
\end{lemma}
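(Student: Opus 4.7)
The plan is to follow the pattern of Lemma 6.3 (the $p=1$ case), relying on two preliminary observations I would establish first. The map $\phi$ of \eqref{embeddef} is $\GL(n)$-equivariant for the natural $\GL(n)$-action on $\Hom(\CC^{\syms^{\le k}(p)},\CC^n)$ (post-composition on $\CC^n$) and the induced diagonal action on each $\sym^l\CC^n$ in $\symdot=\oplus_{l=1}^k\sym^l\CC^n$; this is visible from the formula, since each component of $\phi$ sends $v_\bs$ to $\sum v_{\bs_1}\cdots v_{\bs_j}\in\sym^j\CC^n$, which transforms naturally under $\GL(n)$. Hence $\phi^\grass$ and $\phi^\Proj$ are also $\GL(n)$-equivariant. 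Moreover, when $n\ge\syms^{\le k}(p)$ the group $\GL(n)$ acts transitively on the set of full-rank maps in $\Hom(\CC^{\syms^{\le k}(p)},\CC^n)$, since any linearly independent ordered $\syms^{\le k}(p)$-tuple in $\CC^n$ extends to a basis and can be sent to any other by an element of $\GL(n)$.

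Part (i) would then follow immediately, with $(e_1,\ldots,e_{\syms^{\le k}(p)})$ as the standard representative of the unique full-rank orbit: its image under $\phi^\Proj$ is computed directly from \eqref{embeddef} to be $\zdis$ as in \eqref{zdisdefp}, so the image of the nondegenerate stratum is the single $\GL(n)$-orbit of $\zdis$.

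For part (ii), I would stratify $\jetreg pn$ by the matroid of linear dependences among the $\syms^{\le k}(p)$ columns of the matrix representation, subject to the constraint that the first $p$ columns are linearly independent. Two matrices in $\jetreg pn$ lie in the same $\GL(n)$-orbit if and only if they have the same matroid, and only finitely many matroids exist on a $\syms^{\le k}(p)$-element set; thus $\jetreg pn$ is a finite union of $\GL(n)$-orbits, and by $\GL(n)$-equivariance so are $X_{n,k,p}$ and $Y_{n,k,p}$ in $\PP(\wedge^{\syms^{\le k}(p)}\symdot)$.

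For part (iii), when $k>n$ no matrix in $\Hom(\CC^{\syms^{\le k}(p)},\CC^n)$ attains rank $\syms^{\le k}(p)$, so no single orbit of maximal possible dimension dominates. A dimension count comparing $\dim\jetreg pn$, which grows with $k$, against the fixed $\dim\GL(n)=n^2$ shows that generic $\GL(n)$-orbits have positive codimension in the image; I would make this precise by exhibiting a continuous family of $\GL(n)$-inequivalent points whose $\phi^\Proj$-images remain distinct. The main subtlety will lie in part (ii), in verifying that the matroid really is a complete $\GL(n)$-orbit invariant on $\jetreg pn$ and that distinct source orbits give distinct image orbits under $\phi^\Proj$ rather than collapsing in the passage to subspaces via the Grassmannian embedding.
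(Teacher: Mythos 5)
Your treatment of part (i) is exactly the paper's: the lemma is deduced there from precisely the two facts you isolate, namely the $\GL(n)$-equivariance of $\phi$ (hence of $\phi^\grass$ and $\phi^\Proj$) and the transitivity of $\GL(n)$ on the full-rank locus of $\Hom(\CC^{\syms^{\le k}(p)},\CC^n)$ when $n \ge \syms^{\le k}(p)$. (Note in passing that the displayed definitions of $X_{n,k,p}$ and $Y_{n,k,p}$ in \S 8 appear to have the roles of $\jetreg pn$ and $\jetnondeg pn$ interchanged relative to the $p=1$ convention of Definition \ref{defnxy}; your reading, with $X$ the image of the nondegenerate locus, is the one consistent with part (i).)

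There is, however, a genuine gap in your argument for part (ii). The complete invariant for the action of $\GL(n)$ on $\Hom(\CC^m,\CC^n)$ by post-composition is not the matroid of the columns but the kernel: $B = gA$ for some $g \in \GL(n)$ if and only if $\ker A = \ker B$. The matroid only records the ranks of the coordinate subfamilies of columns, which is strictly coarser; for instance the rank-two maps $\CC^3 \to \CC^n$ with kernel $\langle e_1 + e_2 + t e_3\rangle$, $t \neq 0$, all have the same matroid but lie in distinct $\GL(n)$-orbits as $t$ varies. Consequently $\jetreg pn$ is \emph{not} a finite union of $\GL(n)$-orbits (its orbits are parametrized by the subspaces of $\CC^{\syms^{\le k}(p)}$ meeting $\CC e_1 \oplus \cdots \oplus \CC e_p$ trivially, a positive-dimensional family), and no stratification of the source can yield finiteness of orbits in the image by equivariance alone. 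The finiteness asserted in (ii) is a property of the specific nonlinear map $\phi$ of \eqref{embeddef}: distinct source orbits of degenerate jets collapse under $\phi^\Proj$, and establishing that only finitely many image orbits survive requires an analysis of $\phi$ of the kind carried out in \cite{bsz} for $p=1$; it cannot be reduced to combinatorics of linear dependences. Your argument for (iii) is also only heuristic as stated --- the absence of a full-rank orbit does not by itself preclude a dense orbit of some other type, and the dimension count needs the fibration of the image over $\jetreg pn/\GG_{k,p}$ to be made precise --- but the essential defect is the matroid claim in (ii).
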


Similar statements hold for the closure of the image in the Grassmannian
$${\grass}_{\syms^{\leq k}(p)}(\sym^{\leq k}\CC^n)$$ (or equivalently in the projective space $\PP(\wedge^{\syms^\le k(p)}(\symdot))$).

\begin{lemma} Let $n \ge \syms^{\le k}(\CC^n)$; then  
\begin{enumerate}
\item[(i)] $A_{n,k,p}$ is invariant under the $GL(n)$ action on $\PP(\wedge^{\syms^\le k(p)}(\symdot))$; 
\item[(ii)] $X_{n,k,p} \subset A_{n,k,p}$, although $Y_{n,k,p} \nsubseteq A_{n,k,p}$;
\item[(iii)] $\bX_{n,k,p}$ is the union of finitely many $GL(n)$-orbits. 
\end{enumerate}
\end{lemma}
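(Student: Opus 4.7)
All three parts are essentially bookkeeping once the right observation is in place; I expect only (iii) to draw on any substantive general theory.

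For (i), the plan is to observe that $\GL(n)$ respects the direct-sum decomposition $\symdot = \CC^n \oplus W$, where $W = \oplus_{l=2}^k \sym^l\CC^n$, since each summand $\sym^l\CC^n$ is a $\GL(n)$-subrepresentation. Setting $N = \syms^{\le k}(p)$, this induces a decomposition
\[
\wedge^N(\symdot) \;=\; \bigoplus_{i+j = N} (\wedge^i\CC^n) \otimes (\wedge^j W),
\]
and the $(N,0)$-projection $\pi : \wedge^N(\symdot) \twoheadrightarrow \wedge^N \CC^n$ is therefore $\GL(n)$-equivariant. Hence $A_{n,k,p}=\{[\omega] : \pi(\omega)\neq 0\}$ is $\GL(n)$-invariant.

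For (ii) the plan is a direct calculation from the formula for $\phi$. For $\rho=(v_\bs) \in \jetnondeg pn$ the $\bs$-th column of $\phi(\rho)$ is $c_\bs = v_\bs + \sum_{j \geq 2,\, \bs_1+\cdots+\bs_j=\bs} v_{\bs_1}\cdots v_{\bs_j}$; the $j=1$ term $v_\bs$ lies in $\CC^n$, while every $j\geq 2$ term lies in $\sym^{|\bs|}\CC^n \subseteq W$. Therefore $\pi(\phi^{\Proj}(\rho)) = [\wedge_\bs v_\bs]$, which is nonzero by the non-degeneracy hypothesis (the $v_\bs$ span an $N$-dimensional subspace of $\CC^n$), establishing $X_{n,k,p}\subseteq A_{n,k,p}$. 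For the failure of the inclusion on $Y_{n,k,p}$, I would exhibit the explicit test element
\[
\rho \;=\; (e_1,\ldots,e_p,0,0,\ldots, 0)\;\in\;\jetreg pn \setminus \jetnondeg pn,
\]
i.e.\ $v_{\bs}=e_i$ when $\bs$ is the $i$-th unit vector and $v_\bs=0$ for $|\bs|\geq 2$; then the $\CC^n$-component of $c_\bs$ vanishes for every $\bs$ with $|\bs|\geq 2$, so $\pi(\phi^{\Proj}(\rho))$ is a wedge with at least $N-p\geq 1$ zero factors and hence vanishes.

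For (iii), the previous lemma (part (i)) identifies $X_{n,k,p}$ with the single $\GL(n)$-orbit of $\zdis$, so $\bX_{n,k,p}$ is the closure of a single orbit for an algebraic action of a linear algebraic group on the projective variety $\PP(\wedge^N(\symdot))$. The plan is to invoke the standard fact that every algebraic group orbit is locally closed, so $\bX_{n,k,p}\setminus X_{n,k,p}$ is a proper closed $\GL(n)$-invariant subvariety of strictly smaller dimension; Noetherian induction on dimension, applied to each irreducible component of the boundary, then shows that $\bX_{n,k,p}$ decomposes as a finite union of $\GL(n)$-orbits. I do not foresee any genuine obstacle in the lemma itself; the substantive work for the $p>1$ program will come later, when the codimension-at-least-two arguments of \S6 must be adapted from $\SL(k)/\UU_k$ to the higher-flag setting of $\SL(\syms^{\le k}(p))/\GG_{k,p}'$.
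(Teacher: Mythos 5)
Parts (i) and (ii) of your proposal are correct, and they are essentially the argument the paper gives for the $p=1$ analogue (Lemma \ref{unionoforbits}); the paper states the $p>1$ lemma without proof, so this is exactly what needs to be supplied. Your version of (i), via the $\GL(n)$-equivariance of the projection $\wedge^{N}(\symdot)\to\wedge^{N}\CC^n$ where $N=\syms^{\le k}(p)$, is a clean reformulation of the paper's ``take a lift $\tilde z=\tilde z^1\oplus\tilde z^2$ and note that nonvanishing of the top exterior power of $\tilde z^1$ is preserved'' argument, and your test element $(e_1,\ldots,e_p,0,\ldots,0)$ in (ii) is the right generalisation of the paper's $\phi^{\Proj}(e_1,0,\ldots,0)=e_1\wedge e_1^2\wedge\cdots\wedge e_1^k$. (One caveat: the displayed definitions of $X_{n,k,p}$ and $Y_{n,k,p}$ in \S 8 have the regular and nondegenerate loci interchanged relative to the $p=1$ convention of Definition \ref{defnxy}; like you, one must read them with the $p=1$ convention for the lemma to be true.)

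Part (iii), however, contains a genuine gap. The boundary $\bX_{n,k,p}\setminus X_{n,k,p}$ is indeed a closed $\GL(n)$-invariant subvariety of strictly smaller dimension, and it is a union of orbits of strictly smaller dimension, but this does not make it a \emph{finite} union of orbits, and your Noetherian induction does not get started: an irreducible component of the boundary of an orbit closure need not contain a dense orbit. The general principle you invoke is simply false. For example, let $\SL(2)$ act on $\PP^3=\PP(\CC^2\oplus\CC^2)$ by $g\cdot(v_1,v_2)=(gv_1,gv_2)$; identifying $\CC^2\oplus\CC^2$ with $2\times 2$ matrices, the invertible matrices give a single dense orbit, whose closure is all of $\PP^3$ and whose boundary is the quadric of rank-one matrices $[vw^T]$ --- a $\PP^1$-family (indexed by $[w]$) of pairwise distinct one-dimensional orbits $\PP^1\times\{[w]\}$. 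So the closure of a single orbit of a reductive group acting linearly on projective space can contain infinitely many orbits. The finiteness asserted in (iii) is therefore a substantive claim special to the present situation; in the paper and in \cite{bsz} it rests on the reduction $\overline{\GL(n)\zdis}=\GL(n)\cdot\overline{P\zdis}$ for the relevant parabolic $P$, combined with the explicit classification of the limit points $\zdis_\l$ along one-parameter subgroups of the maximal torus: there are only finitely many such limits because $\zdis_\l$ is determined by the finite combinatorial datum of which monomials $\be_\tau$ attain the minimal weight. The paper itself asserts rather than proves (iii) (even for $p=1$, in Lemma \ref{6.3}), so you are in good company, but the argument you propose would not close that gap, and for $p>1$ the required classification of torus limits has not been carried out anywhere in the paper --- indeed this is precisely the content of the open Conjecture \ref{mainthmp}.
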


\section{Affine embeddings of $\SL(\symkp)/\GG_{k,p}$}

In this section we study the case when $n=\symkp$ and so  $\GL(n) \subset \jetreg pn$. In the previous section we embedded $\jetreg pn/\GG_{k,p}$ in the affine space $A_{n,k,p} \subset \PP(\wedge^n \symk)$, which can be restricted to $\GL(n)$ to give us an embedding  
\[\GL(n)/\GG_{k,p} \hookrightarrow \PP(\wedge^n \symk)\]
as the $\GL(n)$ orbit of 
\[[\ldots \wedge \sum_{|\bs|=j}\sum_{\bs_1+\bs_2+\ldots
+\bs_j=\bs}e_{\bs_1}e_{\bs_2}\ldots e_{\bs_j} \wedge \ldots ].\]
Equivalently we have $\SL(n)/(\SL(n) \cap \GG_{k,p})=\SL(n)/\GG_{k,p}' \rtimes 
F_{k,p}$ embedded in $\wsymk$ as the $\SL(k)$ orbit of 
\[p_{k,p}=\ldots \wedge \sum_{|\bs|=j}\sum_{\bs_1+\bs_2+\ldots
+\bs_j=\bs}e_{\bs_1}e_{\bs_2}\ldots e_{\bs_j} \wedge \ldots ,\]
where $\SL(n) \cap \GG_{k,p}$ is the semi-direct product $\GG_{k,p}' \rtimes 
F_{k,p}$ of $\GG_{k,p}'$ by the finite group $F_{k,p}$ of $l_{k,p}$th roots of unity in $\CC$ for $l_{k,p}=\sum_{i=1}^k i \syms^i p$.
In analogy with $\S$6 we can consider an embedding of $\SL(n)/\GG_{k,p}'$ in
\[\wsymn \otimes (\wedge^p(\CC^n))^{\otimes K}\]
for suitable $K$ and its closure in this affine space.
We expect the following result generalising Theorem \ref{mainthm}.

\begin{conjecture}\label{mainthmp} Let $K=M(\sum_{i=1}^k i \syms^i p)+1$ where $M \in \mathbb{N}$. Then the point 
\[p_{k,p} \otimes (e_1\wedge \ldots \wedge e_p)^{\otimes K} \in 
\wsymn \otimes (\wedge^p(\CC^n))^{\otimes K} \]
where 
\[p_{k,p}=\ldots \wedge \sum_{|\bs|=j}\sum_{\bs_1+\bs_2+\ldots
+\bs_j=\bs}e_{\bs_1}e_{\bs_2}\ldots e_{\bs_j} \wedge \ldots\]
has stabiliser $\GG_{k,p}'$ in $\SL(n)$, and 
the closure of its $\SL(n)$ orbit
$$\overline{\SL(n)(p_{k,p} \otimes (e_1\wedge \ldots \wedge e_p)^{\otimes K})}$$ is the union of the orbit of $p_{k,p} \otimes (e_1\wedge \ldots \wedge e_p)^{\otimes K}$ and finitely many other $\SL(n)$-orbits, all of 
which have codimension at least two 
if $k$ is large enough (depending on $p$) and $M$  is sufficiently large
(depending on $k$ and $p$). 
\end{conjecture}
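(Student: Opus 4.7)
The plan is to mirror the proof of Theorem~\ref{mainthm} from $\S 6$, stage by stage. \emph{Stabilizer first.} By the $\GG_{k,p}$-invariance and injectivity on orbits of $\phi^\Proj$ (the analog of Proposition~5.1 for $p>1$, which should follow from Proposition~\ref{modelpropsgen}), the $\GL(n)$-stabilizer of $[p_{k,p}]\in\PP(\wsymn)$ is $\GG_{k,p}$. Writing $v=p_{k,p}\otimes(e_1\wedge\cdots\wedge e_p)^{\otimes K}$, the extra tensor factor is fixed in $\GL(p)\subset\GG_{k,p}$ by $\SL(p)$ together with the center's $pK$-th roots of unity; combining this with the determinant condition imposed by $\SL(n)$ and the choice $K=M\sum_{i=1}^k i\syms^i(p)+1$ eliminates the part of the center not already lying inside $\SL(p)\subset\GG_{k,p}'$, giving $\mathrm{Stab}_{\SL(n)}(v)=\GG_{k,p}'$.

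\emph{Reduction to a torus calculation.} Let $B_n$ denote the standard Borel of $\SL(n)$ and $P_v\subset B_n$ the subgroup stabilizing $v$ (which contains the image of $\UU_{k,p}\cap B_n$). Writing $B_n=B_n'\cdot P_v$ for a complementary subgroup $B_n'$, one has
$$\overline{\SL(n)v}=\SL(n)\cdot\overline{B_n'v}$$
since $\SL(n)/B_n$ is projective. Because $\overline{\SL(n)v}$ is a finite union of $\SL(n)$-orbits, it suffices to show every $x\in\overline{B_n'v}\setminus B_n'v$ has $\SL(n)$-stabilizer of dimension $\ge\dim\GG_{k,p}'+1$. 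By Lemma~\ref{boundarylemma}, this reduces further to studying limits $\bz_\lambda=\lim_{t\to 0}\lambda(t)v$ for one-parameter subgroups $\lambda:\CC^*\to T_n$ of the standard maximal torus.

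\emph{Classification by head.} Index the basis of $\symk$ by pairs $(\bs,\tau)$ with $\bs\in\ZZ_{\ge 0}^p$, $1\le|\bs|\le k$, labelling the $\bs$-column of the matrix form of $p_{k,p}$, and $\tau$ a partition of $\bs$ labelling a basis vector of $\sym^{|\tau|}\CC^n$. The one-parameter subgroup $\tilde\lambda\subset T_n$ with $\tilde\lambda_\bs=|\bs|$ stabilizes $[p_{k,p}]$. Define the \emph{head} of a general $\lambda$ to be the lex-least $\bs$ at which $\lambda_\bs\ne|\bs|\lambda_{e_1}$, with corresponding regular and degenerate deformations $\lambda^\bs,\mu^\bs$ modelled on the $\l^\s,\mu^\s$ of $\S 6$. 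A suitable generalization of Lemma~\ref{largestorbits} should then identify every codimension-one boundary orbit with the $\SL(n)$-orbit of $\bz_{\lambda^\bs}$ or $\bz_{\mu^\bs}$ for some $|\bs|\le k$, reducing the problem to a head-by-head stabilizer-dimension estimate.

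\emph{Main obstacle.} The hardest step is the analog of Cases~1--3 of $\S 6$: for each candidate head $\bs$, exhibit an additional one-dimensional unipotent stabilizer of $\bz_{\lambda^\bs}$ not contained in $\lim_{t\to 0}G_{\lambda^\bs(t)v}$, and similarly for $\bz_{\mu^\bs}$. For $p=1$ the extra stabilizer is constructed explicitly as $T(e_i)=e_i+\zeta e_j$ and its escape from the limit stabilizer group is verified via a Jacobian/nondegeneracy count on a small polynomial system. For $p>1$ the index poset $\{\bs\in\ZZ_{\ge 0}^p:|\bs|\le k\}$ is much richer, and one must systematically catalogue which multi-indices $\bs'$ can receive $\zeta e_{\bs'}$ added to $e_\bs$ without disturbing the limit, then recover nondegeneracy of the ensuing polynomial system. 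The hypothesis that $k$ be large (depending on $p$) is exactly what ensures enough ``room'' in $\CC^{\syms^{\le k}(p)}$ for these extra transformations, while $M$ large ensures the weight and determinant constraints remain nondegenerate. Organizing this combinatorial bookkeeping uniformly across all heads, and isolating the sharp threshold on $k$, is where the principal technical work will lie.
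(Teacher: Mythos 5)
The statement you are asked to prove is stated in the paper as a \emph{conjecture} (Conjecture \ref{mainthmp}): the authors give no proof, only the one-sentence remark that the argument ``should be similar'' to that of Theorem \ref{mainthm}, with the Borel subgroup $B_k$ of $\SL(k)$ replaced by the standard parabolic of $\SL(n)$ stabilising the filtration $\CC^p\subset\CC^p\oplus\Sym^2\CC^p\subset\cdots\subset\CC^n$. Your outline is a more detailed expansion of exactly that intended route, so it is faithful to the authors' plan; but it is not a proof, and you acknowledge as much. The two load-bearing steps are exactly the ones you defer: (a) the analogue of Lemma \ref{largestorbits}, classifying the codimension-one boundary orbits by reducing to finitely many distinguished one-parameter subgroups $\lambda^{\bs}$, $\mu^{\bs}$ indexed by ``heads''; and (b) for each such limit point, the exhibition of an additional one-dimensional unipotent stabiliser not already contained in $\lim_{t\to 0}G_{\lambda^{\bs}(t)v}$. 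Neither is carried out, and these are precisely where the content of the conjecture lies. Note in particular that in the $p=1$ case the hypothesis $k\ge 4$ is not imposed in advance but emerges only at the end of Case 3 of \S 6 (from the inequality $c\s-2>\s$); there is no a priori reason the $p>1$ combinatorics of the poset $\{\bs\in\ZZ_{\ge0}^p:|\bs|\le k\}$ yields an equally clean threshold, and ``isolating the sharp threshold on $k$'' is not bookkeeping but the substance of the problem.

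Two further points deserve attention before the reduction itself can be trusted. First, your stabiliser computation invokes ``the analog of Proposition 5.1 for $p>1$'': Proposition \ref{modelpropsgen} gives injectivity of $\phi^{\grass}$ on $\GG_{k,p}$-orbits, but identifying the full $\GL(n)$-stabiliser of $[p_{k,p}]$ with $\GG_{k,p}$ (rather than something strictly larger) is a separate verification that the paper does not supply for $p>1$. Second, the assertion that $\overline{\SL(n)v}$ is a finite union of $\SL(n)$-orbits, which you use to pass from ``large stabiliser'' to ``codimension at least two,'' is proved in the paper only for the projective images $X_{n,k,p}$, $Y_{n,k,p}$ (Lemma 8.10), not for the affine orbit closure in $\wsymn\otimes(\wedge^p\CC^n)^{\otimes K}$; this needs an argument. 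In short, your proposal correctly reconstructs the strategy the authors have in mind, but it leaves open the same gaps that cause them to state the result as a conjecture rather than a theorem, and so it does not constitute a proof.
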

  
The proof of Conjecture \ref{mainthmp} should be similar to that of Theorem \ref{mainthm}, 
with the r\^{o}le of the Borel subgroup $B_k$ of $\SL(k)$ played by the standard
parabolic subgroup $P \subset \SL(n)$  which stabilises the filtration
\[0 \subset \CC^p = \CC e_1 \oplus \ldots \oplus \CC e_p \subset 
\CC^p \oplus \Sym^2\CC^p \subset \ldots \subset 
\CC^p \oplus \Sym^2\CC^p \oplus \cdots \oplus \Sym^k\CC^p
= \CC^n.\] 

It follows immediately from Conjecture \ref{mainthmp} that we would have

\begin{conjecture} \label{Gros3} If  $p\geq 1$ and
$k$ is large enough (depending on $p$) then the 
reparametrisation group
$\GG_{k,p}'$ is a subgroup of the special linear group 
$ \SL(\mathrm{sym}^{\le k}p)$, where 
$$ \mathrm{sym}^{\le k}p = \sum_{i=1}^k \dim \sym^i \CC^p =  \left( \begin{array}{c} k+p-1\\ k-1 \end{array} \right), $$ 
such that the algebra of invariants 
$$\calo(\SL(\mathrm{sym}^{\le k}p))^{\GG_{k,p}'}$$
is finitely generated,
so that every linear action of $\GG_{k,p}$ or $\GG_{k,p}'$ on an affine or
projective variety (with respect to an ample linearisation) which extends to a linear action of $ GL(\mathrm{sym}^{\le k}p)$ has finitely generated invariants.
\end{conjecture}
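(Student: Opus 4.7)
The plan is to reduce Conjecture \ref{Gros3} to Conjecture \ref{mainthmp}, then proceed exactly as in the $p=1$ case handled in $\S$6--$\S$7. Assuming the orbit closure statement of Conjecture \ref{mainthmp}, write $n = \mathrm{sym}^{\le k}p$ and let $W = \wedge^n(\mathrm{Sym}^{\le k}\CC^n) \otimes (\wedge^p\CC^n)^{\otimes K}$ and $w_0 = p_{k,p} \otimes (e_1 \wedge \cdots \wedge e_p)^{\otimes K}$. The orbit $\SL(n) w_0 \subset W$ is isomorphic to $\SL(n)/\GG_{k,p}'$ by the stabiliser computation in Conjecture \ref{mainthmp}, and its complement in $\overline{\SL(n) w_0}$ has complex codimension at least two. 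Since $\overline{\SL(n) w_0}$ is an affine variety, this gives
\[ \calo(\SL(n))^{\GG_{k,p}'} \cong \calo(\SL(n)/\GG_{k,p}') \cong \calo(\overline{\SL(n) w_0}), \]
which is finitely generated. Hence $\GG_{k,p}'$ is Grosshans in $\SL(n)$. For any affine $G$-variety $Y$ with $G = \GL(n)$ acting linearly, the standard identification
\[ \calo(Y)^{\GG_{k,p}'} \cong \big(\calo(Y) \otimes \calo(\SL(n))^{\GG_{k,p}'}\big)^{\SL(n)} \]
(and its projective analogue using the graded section ring of an ample linearisation) gives finite generation, and the $\CC^*$-invariant subring $\calo(Y)^{\GG_{k,p}}$ is then also finitely generated because $\CC^* = \GG_{k,p}/\GG_{k,p}'$ is reductive.

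The real work is Conjecture \ref{mainthmp} itself, and this is where the main obstacle lies. The strategy mirrors $\S$6: the stabiliser of $w_0$ in $\GL(n)$ is $\GG_{k,p}$ by the equivariance established in $\S$8--9, and requiring the $(\wedge^p\CC^n)^{\otimes K}$-factor to be fixed forces $\det \Phi_1 = 1$, so the stabiliser in $\SL(n)$ is exactly $\GG_{k,p}'$ for $K \equiv 1 \pmod{\sum i\, \mathrm{sym}^i p}$. To bound the boundary codimension, I would replace the Borel $B_k$ of $\S$6 by the standard parabolic $P \subset \SL(n)$ stabilising the filtration $\CC^p \subset \CC^p \oplus \sym^2\CC^p \subset \cdots \subset \CC^n$. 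Since $\GG_{k,p}' \subset P$ stabilises $w_0$ and $\SL(n)/P$ is projective, $\overline{\SL(n) w_0} = \SL(n)\cdot \overline{P w_0}$, so it suffices to show that a point in the closure of $P w_0$ either lies on the orbit or has stabiliser of dimension at least $\dim \GG_{k,p}' + 1$.

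For the boundary analysis one takes a sequence $b^{(m)} \in P$ with $b^{(m)} w_0$ convergent and studies the limit via a one-parameter subgroup $\lambda$ of the maximal torus $T \subset P$, using the analogue of Lemma \ref{boundarylemma}. As in Lemma \ref{largestorbits}, the weight combinatorics of $\mathrm{Sym}^{\le k}\CC^p$ reduces the problem to a finite list of distinguished one-parameter subgroups $\lambda^\sigma, \mu^\sigma$ indexed by partitions $\sigma$ of $\le k$; for each, one needs to exhibit a $(k \cdot \mathrm{sym}^{\le k}p + 1)$-dimensional unipotent-plus-torus stabiliser of the limit point $z_\lambda$. The $(k-1)$ ``generic'' stabiliser directions come from the limit $G^\sigma = \lim_{t \to 0}\lambda(t)^{-1}\GG_{k,p} \lambda(t)$, generalising Proposition \ref{lemma1} through the substitution $\beta_\nu^l = t^{-n_\nu^l} \alpha_\nu^l$ with $n_\nu^l$ chosen to absorb the worst weight gap. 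The extra stabiliser direction comes, as in Cases~1--3 of $\S$6, from a transvection $T_\zeta$ sending some basis vector $e_j$ to $e_j + \zeta e_i$ whose image under $\zdis_\lambda$ lies in the kernel of the wedge, together with a transversality argument showing $T_\zeta \notin G^\sigma$ via the polynomial equations analogous to \eqref{eq1}--\eqref{eq2}.

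The hard part will be the case analysis in the $p > 1$ setting: the index set of weights is $\ZZ_{\ge 0}^p$ rather than $\{1,\ldots,k\}$, so the notion of ``head'' and the enumeration of regular versus degenerate distinguished one-parameter subgroups becomes genuinely combinatorial, and one must verify that for $k$ sufficiently large compared to $p$ there is always a free basis vector $e_{\mathbf{s}}$ with $|\mathbf{s}|$ maximal that appears only in the top slot of $p_{k,p}$, providing the transvection. The requirement ``$k$ large enough depending on $p$'' in the statement is precisely to avoid boundary degenerations (analogous to the $k \ge 4$ hypothesis in Theorem \ref{mainthm}) where such a free vector fails to exist, forcing one into a low-dimensional regime already covered by \cite{PR} or \cite{rousseau}.
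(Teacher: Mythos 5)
Your proposal follows essentially the same route as the paper: the statement is presented there as a conjecture whose only "proof" is the observation that it follows immediately from Conjecture \ref{mainthmp} via the Grosshans criterion (codimension-two complement in an affine orbit closure implies finite generation of $\calo(\SL(n))^{\GG_{k,p}'}$, then the transfer principle and reductivity of $\CC^* = \GG_{k,p}/\GG_{k,p}'$), which is exactly your reduction. Your sketch of how Conjecture \ref{mainthmp} itself should be attacked --- replacing the Borel $B_k$ by the standard parabolic $P$ stabilising the filtration by symmetric powers, and redoing the one-parameter-subgroup and limit-of-stabilisers analysis --- coincides with the paper's own remark, and you correctly identify that the combinatorial case analysis for $p>1$ is left open in the paper just as in your proposal.
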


In particular we would have

\begin{conjecture}\label{fingenerated3} If  $p\geq 1$ and
$k$ is large enough (depending on $p$) then the fibres $\calo((J_{k,p})_x)^{\GG_{k,p}'}$
of the bundle $E_{k,p}^n$ are finitely generated
graded complex algebras.
\end{conjecture}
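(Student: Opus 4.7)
The plan is to derive Conjecture \ref{fingenerated3} in two steps that mirror precisely the $p=1$ argument of Theorems \ref{mainthm}, \ref{Gros2} and \ref{fingenerated1}: first use the Grosshans machinery reviewed in $\S 3$ to reduce to Conjecture \ref{Gros3}, and then derive Conjecture \ref{Gros3} from the geometric codimension-two statement of Conjecture \ref{mainthmp}. Throughout I write $n = \syms^{\le k}(p)$.

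For the reduction step, note that $\GG_{k,p} \subseteq \GL(n)$ via its explicit matrix representation given in $\S 2$, and the action of $\GG_{k,p}'$ on $(J_{k,p})_x \cong \Hom(\CC^n, \CC^{\dim X})$ extends to the natural linear action of $\GL(n)$ by precomposition on $\CC^n$. Granted Conjecture \ref{Gros3}, the algebra $\calo(\SL(n))^{\GG_{k,p}'}$ is finitely generated, so by the Grosshans isomorphism
\[\calo((J_{k,p})_x)^{\GG_{k,p}'} \cong \bigl(\calo((J_{k,p})_x) \otimes \calo(\SL(n))^{\GG_{k,p}'}\bigr)^{\SL(n)}\]
we conclude finite generation of the left-hand side, because $\SL(n)$ is reductive and both tensor factors are finitely generated $\SL(n)$-algebras. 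This establishes Conjecture \ref{fingenerated3}.

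For the second step, let $Z$ denote the closure of the $\SL(n)$-orbit of $p_{k,p} \otimes (e_1 \wedge \cdots \wedge e_p)^{\otimes K}$ in the affine space $\wsymn \otimes (\wedge^p \CC^n)^{\otimes K}$. Granted Conjecture \ref{mainthmp}, this orbit is isomorphic to $\SL(n)/\GG_{k,p}'$ and its complement in $Z$ has codimension at least two; since $Z$ is affine and $\SL(n)/\GG_{k,p}'$ is smooth and hence normal, passing to the normalisation $\tilde Z$ and applying Hartogs yields $\calo(\tilde Z) \cong \calo(\SL(n)/\GG_{k,p}') \cong \calo(\SL(n))^{\GG_{k,p}'}$, and $\calo(\tilde Z)$ is finitely generated because $\tilde Z \to Z$ is finite and $Z$ is affine.

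The real obstacle is Conjecture \ref{mainthmp} itself. The stabiliser part should be a direct generalisation of Lemma \ref{mainlem}, using Proposition \ref{entry} together with the fact that the central $\CC^*$ of $\GL(p)$ acts on $p_{k,p}$ with a weight that the choice $K = M\sum_i i\,\syms^i p + 1$ is engineered to cancel. The codimension-two claim is considerably harder and would be attacked along the lines of Theorem \ref{mainthm}: replace the Borel $B_k$ by the standard parabolic $P \subset \SL(n)$ stabilising the filtration $0 \subset \CC^p \subset \CC^p \oplus \Sym^2 \CC^p \subset \cdots \subset \CC^n$; use projectivity of $\SL(n)/P$ together with Lemma \ref{boundarylemma} to reduce the boundary analysis to one-parameter subgroups $\lambda$ of the maximal torus of $\SL(n)$; classify those $\lambda$ whose limit orbit has codimension one; and for each such maximal degeneration construct an extra stabiliser direction beyond the toral $\lambda$ and the unipotent limit $\lim_{t \to 0} G_{\lambda(t)\cdot p_{k,p}}$. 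The hardest step will be the combined classification and extra-stabiliser construction: for $p=1$ these produced the discrete families $\zdis_{\lambda^\s}$ and $\zdis_{\mu^\s}$ indexed by $2 \le \s \le k$ and the three-case analysis of $\S 6$, but for $p > 1$ the relevant one-parameter subgroups and limit points are parametrised by partitions of multi-indices $\bs \in \ZZ_{\ge 0}^p$, and the combinatorics is substantially more intricate. The hypothesis ``$k$ large enough depending on $p$'' should enter precisely as the condition guaranteeing, for every maximal degeneration $p_{k,p}^\infty$, the existence of a basis element $e_{\bs_0}$ of $\symdott$ not appearing in $p_{k,p}^\infty$ together with some $\bs'$ such that the automorphism $e_{\bs_0} \mapsto e_{\bs_0} + e_{\bs'}$ of $\CC^n$ fixes every term of $p_{k,p}^\infty$, supplying the required extra unipotent stabiliser.
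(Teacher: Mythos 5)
Your proposal follows exactly the paper's own (conditional) route: the statement is presented in $\S 9$ as a conjecture, derived there --- just as in your first two steps --- from Conjecture \ref{Gros3} via the Grosshans isomorphism and from Conjecture \ref{mainthmp} via the codimension-at-least-two criterion of $\S 3$, with the proof of Conjecture \ref{mainthmp} itself only sketched as an analogue of Theorem \ref{mainthm} in which the parabolic $P$ stabilising the filtration $\CC^p \subset \CC^p \oplus \Sym^2\CC^p \subset \cdots$ replaces the Borel $B_k$. You correctly identify that the genuine open content is Conjecture \ref{mainthmp} --- in particular the classification of maximal boundary degenerations and the construction of the extra unipotent stabiliser for $p>1$ --- which neither your proposal nor the paper carries out, so both arguments remain conditional.
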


We would also obtain geometric descriptions of the associated affine varieties
 $$\mathrm{Spec} (\calo(\SL(\mathrm{sym}^{\le k}p))^{\GG_{k,p}'}) $$  and $\mathrm{Spec} (\calo((J_{k,p})_x)^{\GG_{k,p}'}) $  generalising those in $\S$7.

\end{document}